\def\l{\left}
\def\r{\right}
\def\mR{\mathbb{R}}
\def\mRd{\mathbb{R}^d}
\def\mRdp{\mathbb{R}^{d+1}}
\def\vp{\varphi}
\def\div{\text{div}}
\def\PV{\text{P.V.}}
\def\Gts{\widetilde{G}_s}
\def\Th{\mathcal{T}_h}
\def\Nhi{\mathcal{N}_h^\circ}
\def\Nhc{\mathcal{N}_h^c}
\def\interp{\mathcal I_h}
\def\NOmega{\mathcal{N}_{\Omega}}
\def\NLam{\mathcal{N}_{\Lambda}}
\def\Cl{\Pi_h}
\def\ve{\varepsilon}
\def\pO{\partial\Omega}
\def\x{\texttt{x}}
\newcommand{\phii}{\varphi}
\newcommand{\pp}{\partial}
\newcommand{\eps}{\varepsilon}
\def\dist{\textrm{dist}}
\newtheorem{Theorem}{Theorem}[section]
\newtheorem{Lemma}[Theorem]{Lemma}
\newtheorem{Proposition}[Theorem]{Proposition}
\newtheorem{Corollary}[Theorem]{Corollary}
\newtheorem{Remark}[Theorem]{Remark}
\algrenewcommand\algorithmicrequire{\textbf{Input:}}
\algrenewcommand\algorithmicensure{\textbf{Output:}}
\numberwithin{equation}{section}
\begin{document}

\title{Finite element algorithms for \\ nonlocal
minimal graphs}

\author{Juan Pablo~Borthagaray\thanks{Departamento de Matem\'atica y Estad\'istica del Litoral, Universidad de la Rep\'ublica, Salto, Uruguay
		(\email{jpborthagaray@unorte.edu.uy}). JPB has been supported in part by NSF grant DMS-1411808 and Fondo Vaz Ferreira grant 2019-068.}
	\and Wenbo~Li \thanks{Department of Mathematics, University of Tennessee, Knoxville, TN 37996, USA
		(\email{wli50@utk.edu}). WL has been supported in part by NSF grant DMS-1411808 and the Patrick and Marguerite Sung Fellowship in Mathematics of the University of Maryland.}
	\and Ricardo H.~Nochetto \thanks{Department of Mathematics and Institute for Physical Science and Technology, University of Maryland, College Park, MD 20742, USA (\email{rhn@umd.edu}). RHN has been supported in part by NSF grants DMS-1411808 and DMS-1908267.}
}

\headers{FEMs for nonlocal minimal graphs}{J.P. Borthagaray, W. Li, AND R.H. Nochetto}

\maketitle


\begin{abstract}
We discuss computational and qualitative aspects of the fractional Plateau and the prescribed fractional mean curvature problems on bounded domains subject to exterior data being a subgraph. We recast these problems in terms of energy minimization, and we discretize the latter with piecewise linear finite elements. 
For the computation of the discrete solutions, we propose and study a gradient flow and a Newton scheme, and we quantify the effect of Dirichlet data truncation. 
We also present a wide variety of numerical experiments that illustrate qualitative and quantitative features of fractional minimal graphs and the associated discrete problems.
\end{abstract}

\begin{keywords}
	nonlocal minimal surfaces, finite elements, fractional diffusion
\end{keywords}

\begin{AMS}
	49Q05, 35R11, 65N12, 65N30
\end{AMS}

\section{Introduction} \label{sec:intro}
This paper is the continuation of \cite{BoLiNo19analysis}, where the authors proposed and analyzed a finite element scheme for the computation of fractional minimal graphs of order $s \in (0,1/2)$ over bounded domains. That problem can be interpreted as a nonhomogeneous Dirichlet problem involving a nonlocal, nonlinear, degenerate operator of order $s + 1/2$. 
In this paper, we discuss computational aspects of such a formulation and perform several numerical experiments illustrating interesting phenomena arising in  fractional Plateau problems and prescribed nonlocal mean curvature problems.

The notion of {\em fractional perimeter} was introduced in the seminal papers by Imbert \cite{Imbert09} and by Caffarelli, Roquejoffre and Savin \cite{CaRoSa10}. These works were motivated by the study of interphases that arise in classical
phase field models when very long space correlations are present. On the one hand, \cite{Imbert09} was motivated by stochastic Ising models with Ka\v{c} potentials with slow decay at infinity, that give rise (after a suitable rescaling) to problems closely related to fractional reaction-diffusion equations such as
\[
\partial_t u_\eps + (-\Delta)^s u_\eps + \frac{f(u_\eps)}{\eps^{1+2s}} = 0,
\]
where $(-\Delta)^s$ denotes the fractional Laplacian of order $s\in (0,1/2)$ and $f$ is a bistable nonlinearity. On the other hand, reference \cite{CaRoSa10} showed that certain threshold dynam-ics-type algorithms, in the spirit of \cite{MeBeOs92} but corresponding to the fractional Laplacian of order $s \in (0,1/2)$ converge (again, after rescaling) to motion by fractional mean curvature.
Fractional minimal sets also arise in the $\Gamma$-limit of nonlocal Ginzburg-Landau energies \cite{SaVa12Gamma}.

We now make the definition of fractional perimeter precise. 
Let $s \in (0,1/2)$ and two sets $A, B \subset \mRd$, $d \ge 1$. Then, the fractional perimeter of order $s$ of $A$ in $B$ is
\[
P_s(A;B) := \frac{1}{2} \iint_{Q_B} \frac{|\chi_A(x) - \chi_A(y)|}{|x-y|^{d+2s}} \, dy dx,
\]
where $Q_B = (\mRd \times \mRd) \setminus (B^c \times B^c)$ and $B^c = \mRd \setminus B$.
Given some set $A_0 \subset \mRd \setminus B$, a natural problem is how to extend $A_0$ into $B$ while minimizing the $s$-perimeter of the resulting set. This is the fractional Plateau problem, and it is known that, if $B$ is a bounded set, it admits a unique solution. Interestingly, in such a case it may happen that either the minimizing set $A$ is either empty in $B$ or that it completely fills $B$. This is known as a stickiness phenomenon \cite{DipiSavinVald17}.

In this work, we analyze finite element methods to compute fractional minimal graphs on bounded domains. Thus, we consider $s$-minimal sets on a cylinder $B = \Omega \times \mR$, where $\Omega$ is a bounded and sufficiently smooth domain, with exterior data being a subgraph,
\begin{equation*}
A_0 = \l\{ (x', x_{d+1}) \colon x_{d+1} < g(x'), \; x' \in \mRd \setminus \Omega \r\},
\end{equation*}
for some continuous function $g: \mRd \setminus \Omega \to \mR$. We briefly remark some key features of this problem:

\begin{enumerate}[$\bullet$]
\item A technical difficulty arises immediately: all sets $A$ that coincide with $A_0$ in $\mRdp \setminus B$ have infinite $s$-perimeter in $B$. To remedy this issue, one needs to introduce the notion of {\em locally} minimal sets \cite{Lomb16Approx}.

\item There exists a unique locally $s$-minimal set, and it is given by the subgraph of a certain function $u$, cf. \cite{DipiSavinVald16Graph, Lombardini-thesis}. Thus, one can restrict the minimization problem to the class of subgraphs of functions that coincide with $g$ on $\Omega^c$.

\item If the exterior datum $g$ is a bounded function, then one can replace the infinite cylinder $B = \Omega \times \mR$ by a truncated cylinder $B_M = \Omega \times (-M,M)$ for some $M>0$ sufficiently large \cite[Proposition 2.5]{Lombardini-thesis}.

\item Let $A$ be the subgraph of a certain function $v$ that coincides with $g$ on $\Omega^c$. One can rewrite $P_s(A,B_M)$ as
\[
P_s(A,B_M) = I_s[v] + C(M,d,s,\Omega,g),
\]
where $I_s$ is the nonlocal energy functional defined in \eqref{E:NMS-Energy-Graph} below \cite[Proposition 4.2.8]{Lombardini-thesis}, \cite[Proposition 2.3]{BoLiNo19analysis}.
\end{enumerate}

Therefore, an equivalent formulation to the Plateau problem for nonlocal minimal graphs consists in finding a function $u: \mRd \to \mR$, with the constraint $u = g$ in ${\Omega}^c$, such that it minimizes the strictly convex energy
\begin{equation}\label{E:NMS-Energy-Graph}
I_s[u] := \iint_{Q_{\Omega}} F_s\l(\frac{u(x)-u(y)}{|x-y|}\r) \frac{1}{|x-y|^{d+2s-1}} \;dxdy,
\end{equation}
where $F_s$ is defined as 
\begin{equation} \label{E:def_Fs}
F_s(\rho) := \int_0^\rho \frac{\rho-r}{\l( 1+r^2\r)^{(d+1+2s)/2}} dr.
\end{equation}

A remarkable difference between nonlocal minimal surface problems and their local counterparts is the emergence of {\em stickiness} phenomena \cite{DipiSavinVald17}. In the setting of this paper, this means that the minimizer may be discontinuous across $\pp\Omega$. As shown by Dipierro, Savin and Valdinoci \cite{DipiSavinVald19nonlocal}, stickiness is indeed the typical behavior of nonlocal minimal graphs in case $\Omega \subset \mR$. When $\Omega \subset \mR^2$, reference \cite{DipiSavinVald19boundary} proves that, at any boundary points at which stickiness does not happen, the tangent planes of the traces from the interior necessarily coincide with those of the exterior datum. Such a hard geometric constraint is in sharp contrast with the case of classical minimal graphs.
In spite of their boundary behavior, fractional minimal graphs are smooth in the interior of the domain. Indeed, with the notation and assumptions from above it holds that $u \in C^\infty (\Omega)$; see \cite[Theorem 1.1]{CabreCozzi2017gradient}, and \cite{Barrios2014bootstrap, Figalli2017regularity}.

Our previous work \cite{BoLiNo19analysis} introduced and studied a finite element scheme for the computation of fractional minimal graphs. We proved convergence of the discrete minimizers as the mesh size tends to zero, both in suitable Sobolev norms and with respect to a novel geometric notion of error \cite{BoLiNo19analysis}. Stickiness phenomena was apparent in the experiments displayed in \cite{BoLiNo19analysis}, even though the finite element spaces consisted of continuous, piecewise linear functions.
We also refer the reader to \cite{BoLiNo19linear} for further numerical examples and discussion on computational aspects of fractional minimal graph problems.

This paper is organized as follows. \Cref{sec:formulation} gives the formulation of the minimization problem we aim to solve, and compares it with the classical minimal graph problem. Afterwards, in \Cref{sec:discretization} we introduce our finite element method and review theoretical results from \cite{BoLiNo19analysis} regarding its convergence. \Cref{sec:schemes} discusses computational aspects of the discrete problem, including the evaluation of the nonlocal form that it gives rise to, and the solution of the resulting discrete nonlinear equation via a semi-implicit gradient flow and a damped Newton method. Because the Dirichlet data may have unbounded support, we discuss the effect of data truncation and derive explicit bounds on the error decay with respect to the diameter of the computational domain in \Cref{sec:support_g}. \Cref{sec:prescribed_curvature} is concerned with the prescribed nonlocal mean curvature problem. Finally, \Cref{sec:experiments} presents a number of computational experiments that explore qualitative and quantitative features of nonlocal minimal graphs and functions of prescribed fractional mean curvature, the conditioning of the discrete problems and the effect of exterior data truncation.

\section{Formulation of the problem} \label{sec:formulation}
We now specify the problem we aim to solve in this paper and pose its variational formulation. Let $s \in (0,1/2)$ and $g \in L^\infty(\Omega^c)$ be given. 
We consider the space
\begin{equation*}
\mathbb{V}^g := \{ v \colon \mRd \to \mR \; \colon \; v\big|_\Omega \in W^{2s}_1(\Omega), \ v = g \text{ in } {\Omega}^c \}, 
\end{equation*}
equipped with the norm
\[
\| v \|_{\mathbb{V}^g } := \| v \|_{L^1(\Omega)} + | v |_{\mathbb{V}^g },
\]
where
\[
| v |_{\mathbb{V}^g} := \iint_{Q_{\Omega}}  \frac{|v(x)-v(y)|}{|x-y|^{d+2s}} dxdy , 
\]
where $Q_{\Omega} = (\mathbb{R}^d\times\mathbb{R}^d) \setminus (\Omega\times\Omega)$.
The space $\mathbb{V}^g$ can be understood as that of functions in $W^{2s}_1 (\Omega)$ with `boundary value' $g$. The seminorm in $\mathbb{V}^g$ does not take into account interactions over $\Omega^c \times \Omega^c$, because these are fixed for the class of functions we consider; therefore, we do not need to assume $g$ to be a function in $W^{2s}_1(\Omega^c)$. In particular, $g$ may not decay at infinity.
In case $g$ is the zero function, the space $\mathbb{V}^g$ coincides with the standard zero-extension Sobolev space $\widetilde{W}^{2s}_1(\Omega)$; for consistency of notation, we denote such a space by $\mathbb{V}^0$.

For convenience, we introduce the following notation: given a function $u \in \mathbb{V}^g$, the form $a_u \colon \mathbb{V}^g \times \mathbb{V}^0 \to \mathbb{R}$ is
\begin{equation} \label{E:def-a}
a_u(w,v) := \iint_{Q_{\Omega}} \Gts\l(\frac{u(x)-u(y)}{|x-y|}\r) \frac{(w(x)-w(y))(v(x)-v(y))}{|x-y|^{d+1+2s}}dx dy, 
\end{equation}
where 
\begin{equation} \label{eq:Gts}
\Gts(\rho) := \int_0^1 (1+ \rho^2 r^2)^{-(d+1+2s)/2} dr.
\end{equation}
It is worth noticing that $\Gts(\rho) \to 0$ as $|\rho| \to \infty$. Thus, the weight in \eqref{E:def-a} degenerates whenever the difference quotient $\frac{|u(x)-u(y)|}{|x-y|}$ blows up.

The weak formulation of the fractional minimal graph problem can be obtained by the taking first variation of $I_s[u]$ in \eqref{E:NMS-Energy-Graph} in the direction $v$. As described in \cite{BoLiNo19analysis}, that problem reads: find $u \in \mathbb{V}^g$ such that
\begin{equation}\label{E:WeakForm-NMS-Graph}
a_u(u,v) = 0 \quad \forall v \in \mathbb{V}^0.
\end{equation}

In light of the previous considerations, equation \eqref{E:WeakForm-NMS-Graph} can be regarded as a fractional diffusion problem of order $s+1/2$ in $\mathbb{R}^d$ with weights depending on the solution $u$ and fixed nonhomogeneous boundary data $g$.

\begin{Remark}[comparison with local problems]
Roughly, in the classical minimal graph problem, given some boundary data $g$, one seeks for a function $u \in g + H^1_0(\Omega)$ such that 
\[
\int_\Omega \frac{1}{\sqrt{1+|\nabla u|^2}} \, \nabla u \cdot \nabla v \; dx = 0, \quad \forall v \in H^1_0(\Omega).
\]
The integral above can be interpreted as a weighted $H^1$-form, where the weight depends on $u$ and degenerates as $|\nabla u| \to \infty$. 

In a similar way, problem \eqref{E:WeakForm-NMS-Graph} involves a weighted $H^{s+1/2}$-form, in which the weight depends on $u$ and degenerates as $\frac{|u(x)-u(y)|}{|x-y|} \to \infty$. In this sense, it is not surprising that the fractional-order problems converge to the local ones as $s \to 1/2$. We refer to \cite[Section 5]{BoLiNo19analysis} for a discussion on this matter.
\end{Remark}

\section{Finite element discretization}  \label{sec:discretization}

In this section we first introduce the finite element spaces and the discrete formulation of problem \eqref{E:WeakForm-NMS-Graph}. Afterwards, we briefly outline the key ingredients in the convergence analysis for this scheme. For the moment, we shall assume that $g$ has bounded support: 
\[
\mbox{supp}(g) \subset \Lambda, \mbox{for some bounded set } \Lambda.
\]
The discussion of approximations in case of unboundedly supported data is postponed to Section \ref{sec:support_g}.

\subsection{Discrete setting}
We consider a family $\{\Th \}_{h>0}$ of conforming and simplicial triangulations of $\Lambda$, and we assume that all triangulations in $\{\Th \}_{h>0}$ mesh $\Omega$ exactly. Moreover, we assume $\{\Th \}_{h>0}$ to be shape-regular, namely:
\[
  \sigma = \sup_{h>0} \max_{T \in \Th} \frac{h_T}{\rho_T} <\infty,
\]
where $h_T = \mbox{diam}(T)$ and $\rho_T $ is the diameter of the largest ball contained in the element $T\in \Th$. The vertices of $\Th$ will be denoted by $\{\x_i\}$, and the star or patch of $\{ \x_i \}$ is defined as
\[
S_i := \mbox{supp} (\phii_i) ,
\]
where $\phii_i$ is the nodal basis function corresponding to the node $\x_i$.

To impose the condition $u = g$ in $\Omega^c$ at the discrete level, we introduce the exterior interpolation operator
\begin{equation} \label{eq:ex-Clement}
\Cl^c g := \sum_{\x_i \in \Nhc} (\Cl^{\x_i} g) (\x_i) \; \varphi_i,
\end{equation}
where $\Cl^{\x_i} g$ is the $L^2$-projection of  $g\big|_{S_i \cap \Omega^c}$ onto $\mathcal{P}_1(S_i \cap \Omega^c)$. Thus, $\Cl^c g (\x_i)$ coincides with the standard Cl\'ement interpolation of $g$ on $\x_i$ for all nodes $\x_i$ such that $S_i \subset \mRd \setminus \overline{\Omega}$. On the other hand, for nodes $\x_i \in\partial\Omega$, $\Cl^c$ only averages over the elements in $S_i$ that lie in $\Omega^c$.

We consider discrete spaces consisting of piecewise linear functions over $\Th$,
\[
\mathbb{V}_h :=  \{ v \in C(\Lambda) \colon v|_T \in \mathcal{P}_1 \; \forall T \in \Th \}.
\]
To account for the exterior data, we define the discrete counterpart of $\mathbb{V}^g$,
\[
\mathbb{V}_h^g := \{ v \in \mathbb{V}_h \colon \ v|_{\Lambda \setminus \Omega} = \Cl^c g\}.
\]
With the same convention as before, we denote by $\mathbb{V}_h^0$ the corresponding space in case $g \equiv 0$.
Therefore, the discrete weak formulation reads: find $u_h \in \mathbb{V}^g_h$ such that
\begin{equation}\label{E:WeakForm-discrete}
a_{u_h}(u_h, v_h) = 0 \quad \mbox{for all } v_h \in \mathbb{V}^0_h. 
\end{equation}

\begin{Remark}[well-posedness of discrete problem]
Existence and uniqueness of  solutions to the discrete problem \eqref{E:WeakForm-discrete} is an immediate corollary of our assumption $g \in L^{\infty}({\Omega}^c)$. Indeed, from this condition it follows that $u_h$ is a solution of \eqref{E:WeakForm-discrete} if and only if $u_h$ minimizes the strictly convex energy $I_s[u_h]$ over the discrete space $\mathbb{V}_h^g$.
\end{Remark}

\subsection{Convergence}
In \cite{BoLiNo19analysis}, we have proved that solutions to \eqref{E:WeakForm-discrete} converge to the fractional minimal graph as the maximum element diameter tends to $0$.
An important tool in that proof is a quasi-interpolation operator $\interp \colon \mathbb{V}^g \to \mathbb{V}_h^g$ that combines the exterior Cl\'ement interpolation \eqref{eq:ex-Clement} with an interior interpolation operator.  More precisely, we set
\begin{equation} \label{E:interpolation}
\interp v := \Cl^\circ \l(v \big|_\Omega \r)  + \Cl^c g ,
\end{equation}
where $\Cl^\circ$ involves averaging over element stars contained in $\Omega$. Because the minimizer $u$ is smooth in the interior of $\Omega$, but we have no control on its boundary behavior other than the global bound $u \in W^{2s}_1(\Omega)$, we can only assert convergence of the interpolation operator in a $W^{2s}_1$-type seminorm without rates.

\begin{Proposition}[interpolation error] \label{prop:interpolation_estimate}
Let $s \in (0,1/2)$, $\Omega$ be a bounded domain, $g \in C(\Omega^c)$, and $u$ be the solution to \eqref{E:WeakForm-NMS-Graph}. Then, the interpolation operator \eqref{E:interpolation} satisfies
\[
\iint_{Q_{\Omega}}  \frac{\l| (\interp u - u)(x) - (\interp u - u)(y) \r|}{|x-y|^{d+2s}} \; dxdy \to 0 \quad \mbox{ as } h \to 0.
\]
\end{Proposition}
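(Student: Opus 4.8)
\smallskip

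The left-hand side is exactly the seminorm $|\interp u - u|_{\mathbb V^0}$, and the plan is to split the interpolation error into an interior and an exterior contribution,
\[
\interp u - u \;=\; E_\circ + E_c , \qquad E_\circ := \bigl(\Cl^\circ(u|_\Omega) - u|_\Omega\bigr)\chi_\Omega , \qquad E_c := \Cl^c g - g\,\chi_{\Omega^c} .
\]
This is an identity because $\Cl^\circ(u|_\Omega)$ is supported in $\Oc$ and vanishes on $\pO$, so $\interp u$ equals $\Cl^\circ(u|_\Omega)+\Cl^c g$ on $\Omega$ and equals $\Cl^c g$ on $\Omega^c$; note that $\Cl^c g$ need not vanish on $\Omega$, as it leaks into an $O(h)$-collar of $\pO$. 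I will use two features of the standing hypothesis $2s<1$: (i) the zero-extension operator maps $W^{2s}_1(\Omega)$ boundedly into $W^{2s}_1(\mRd)$ — equivalently $\mathbb V^0 = \widetilde W^{2s}_1(\Omega)$ — so $|w|_{\mathbb V^0}\le C\|w\|_{W^{2s}_1(\Omega)}$ for every $w\in\mathbb V^0$, and $C^\infty_c(\Omega)$ is dense in $W^{2s}_1(\Omega)$; and (ii) $\dist(\cdot,\pO)^{-2s}$ is integrable on bounded neighborhoods of $\pO$, equivalently $\int_{\Omega^c}|x-y|^{-d-2s}\,dy\le C\dist(x,\pO)^{-2s}$ for $x\in\Omega$ and symmetrically for $x\in\Omega^c$.

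For the interior term, $E_\circ$ vanishes on $\Omega^c$ and $E_\circ|_\Omega = \Cl^\circ(u|_\Omega)-u|_\Omega\in W^{2s}_1(\Omega)$, so (i) gives $|E_\circ|_{\mathbb V^0}\le C\|\Cl^\circ(u|_\Omega)-u|_\Omega\|_{W^{2s}_1(\Omega)}$, and it suffices to prove $\|\Cl^\circ v - v\|_{W^{2s}_1(\Omega)}\to 0$ for $v:=u|_\Omega$. This is a standard density/uniform-boundedness argument: $\Cl^\circ$ is bounded on $W^{2s}_1(\Omega)$ uniformly in $h$, and for $\vp\in C^\infty_c(\Omega)$ one has $\|\Cl^\circ\vp-\vp\|_{W^{2s}_1(\Omega)}\to 0$ by classical finite element estimates (for small $h$, $\Cl^\circ\vp$ is a quasi-interpolant of $\vp$ supported away from $\pO$); approximating $v$ in $W^{2s}_1(\Omega)$ by such $\vp$ concludes. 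Only the global bound $u|_\Omega\in W^{2s}_1(\Omega)$ enters here; interior smoothness of $u$ is not needed for this step.

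The exterior term requires the main work; we claim $|E_c|_{\mathbb V^0}\to 0$. Note $E_c$ is independent of $u$ and, under the standing assumption $\mbox{supp}\,g\subset\Lambda$, is supported in the fixed bounded set $\overline\Lambda$. Split the integral over $Q_\Omega$ into its part over $\Omega\times\Omega$ and twice its part over $\Omega\times\Omega^c$. On $\Omega$, $E_c=(\Cl^c g)|_\Omega$ is supported in the collar $\mathcal C_h:=\{x\in\Omega:\dist(x,\pO)\le Ch\}$, is bounded by $C\|g\|_{L^\infty}$, and is Lipschitz with constant $O(1/h)$; the standard estimate for such a collar-supported function bounds its $W^{2s}_1(\Omega)$-seminorm by $Ch^{1-2s}\to 0$ — here $2s<1$ is essential. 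In the mixed part, bound $|E_c(x)-E_c(y)|\le|E_c(x)|+|E_c(y)|$: by (ii) the $|E_c(x)|$-contribution with $x\in\Omega$ is at most $\int_{\mathcal C_h}|(\Cl^c g)|_\Omega(x)|\,\dist(x,\pO)^{-2s}\,dx\le C\|g\|_{L^\infty}\int_0^{Ch}t^{-2s}\,dt=O(h^{1-2s})$, and the $|E_c(y)|$-contribution with $y\in\Omega^c$ is at most $\int_{\overline\Lambda\setminus\Omega}|(\Cl^c g - g)(y)|\,\dist(y,\pO)^{-2s}\,dy$; since $\Cl^c g - g$ is supported in the bounded set $\overline\Lambda\setminus\Omega$ with $\|\Cl^c g-g\|_{L^\infty(\overline\Lambda\setminus\Omega)}\le\omega_g(Ch)$ ($\omega_g$ the modulus of continuity of $g$ on the compact set $\overline\Lambda\setminus\Omega$) while $\int_{\overline\Lambda\setminus\Omega}\dist(y,\pO)^{-2s}\,dy<\infty$ (again $2s<1$), this contribution is at most $C\omega_g(Ch)\to 0$. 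Hence $|E_c|_{\mathbb V^0}\to 0$, and $|\interp u - u|_{\mathbb V^0}\le|E_\circ|_{\mathbb V^0}+|E_c|_{\mathbb V^0}\to 0$.

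The crux is the exterior estimate: the $O(1)$ values of $\Cl^c g$ inside the $O(h)$-collar meet the singular weight $\int_{\Omega^c}|x-y|^{-d-2s}dy\sim\dist(x,\pO)^{-2s}$, and the two balance into an $O(h^{1-2s})$ bound precisely because $2s<1$; the same mechanism — a defect concentrated on an $O(h)$-collar has $W^{2s}_1$-seminorm of order $h^{1-2s}$ — is what makes the density step go through even though $\Cl^\circ$ does not preserve boundary values. An alternative, closer to \cite{BoLiNo19analysis}, would instead exploit the interior regularity $u\in C^\infty(\Omega)$ to obtain rates on a fixed interior subdomain, combined with absolute continuity of the finite quantity $|u|_{\mathbb V^g}$ and a local stability property of $\interp$ to control a boundary collar uniformly in $h$ before sending $h\to 0$; the architecture of the argument is the same.
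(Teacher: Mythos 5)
Your proof is correct in outline, and the decomposition $\interp u - u = E_\circ + E_c$ with $E_\circ \in \mathbb V^0$ supported in $\Oc$ and $E_c$ depending only on $g$ is the right starting point. Your treatment of the exterior term — splitting into the $\Omega\times\Omega$ contribution (the collar estimate $O(h^{1-2s})$ from a defect that is $O(\|g\|_{L^\infty})$ in size, $O(1/h)$-Lipschitz, and supported on an $O(h)$-neighborhood of $\pO$), plus the mixed contributions controlled by $\dist(\cdot,\pO)^{-2s}$ and the modulus of continuity $\omega_g$ — is sound and faithful to the mechanism at work, and the remark that $2s<1$ is the load-bearing hypothesis is apt.

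Where you genuinely diverge from the paper is the interior term. The paper's proof (carried out in \cite{BoLiNo19analysis}, and summarized before the proposition) leans on the fact that the minimizer $u$ is $C^\infty$ in the interior of $\Omega$: one gets a rate for $\Cl^\circ u - u$ on any compact $\Omega'\Subset\Omega$, and the contribution from the boundary collar $\Omega\setminus\Omega'$ is controlled by absolute continuity of the finite double integral defining $|u|_{\mathbb V^g}$ together with local stability of $\Cl^\circ$. Your route instead uses only the global bound $u|_\Omega\in W^{2s}_1(\Omega)$, and replaces regularity by a density argument: uniform $W^{2s}_1(\Omega)$-stability of $\Cl^\circ$ plus the approximation property $\|\Cl^\circ\vp-\vp\|_{W^{2s}_1(\Omega)}\to 0$ for $\vp\in C^\infty_c(\Omega)$. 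This makes your argument somewhat more generic — it would apply to any target in $W^{2s}_1(\Omega)$, not just the minimizer — but it buys that generality at the price of a non-trivial technical lemma you assert without proof: the \emph{uniform-in-$h$} $W^{2s}_1$-boundedness of the boundary-modified Clément operator $\Cl^\circ$ (which forces zero nodal values on $\pO$). This is believable for $2s<1$ precisely because then $W^{2s}_1(\Omega)=\widetilde W^{2s}_1(\Omega)$ so there is no trace obstruction, but it requires a localization argument in a fractional $L^1$-scale that is not in the standard Clément/Scott--Zhang toolbox and should be either proved or cited. The paper's regularity-based route avoids this lemma entirely. You do flag this alternative in your closing paragraph, and you are right that both architectures ultimately rest on the same scaling — a collar defect of width $O(h)$ has $W^{2s}_1$-seminorm of order $h^{1-2s}$.
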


Once we have proved the convergence of $\interp u$ to $u$, energy consistency follows immediately. Since the energy dominates the $W^{2s}_1(\Omega)$-norm \cite[Lemma 2.5]{BoLiNo19analysis}, we can prove convergence in $W^{2r}_1(\Omega)$ for all $r \in [0,s)$ by arguing by compactness.

\begin{Theorem}[convergence] \label{thm:convergence}
Assume $s \in (0,1/2)$, $\Omega \subset \mRd$ is a $C^{1,1}$ domain, and $g \in C_c(\mRd)$. Let $u$ be the minimizer of $I_s$ on $\mathbb{V}^g$ and $u_h$ be the minimizer of $I_s$ on $\mathbb{V}_h^g$. Then, it holds that
\[
\lim_{h \to 0} \| u - u_h \|_{W^{2r}_1(\Omega)} = 0, \quad \forall r \in [0,s).
\]
\end{Theorem}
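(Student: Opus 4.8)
The plan is a compactness argument resting on three ingredients: the global Lipschitz continuity of $F_s$, which makes $I_s$ continuous with respect to the $W^{2s}_1$-type seminorm appearing in \Cref{prop:interpolation_estimate}; the coercivity of $I_s$ over $\mathbb{V}^g$ (the energy dominates the $W^{2s}_1(\Omega)$-norm, \cite[Lemma 2.5]{BoLiNo19analysis}); and the strict convexity of $I_s$, which determines the limit uniquely. The Lipschitz bound is immediate: differentiating \eqref{E:def_Fs} gives $F_s'(\rho) = \int_0^\rho (1+r^2)^{-(d+1+2s)/2}\,dr = \rho\,\Gts(\rho)$, so $|F_s'(\rho)| \le \int_0^\infty (1+r^2)^{-(d+1+2s)/2}\,dr =: L_s < \infty$ since $d+1+2s>1$. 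Hence
\begin{equation*}
|I_s[v] - I_s[w]| \le L_s \iint_{Q_{\Omega}} \frac{\l|(v-w)(x)-(v-w)(y)\r|}{|x-y|^{d+2s}}\,dx\,dy \qquad \forall\, v,w \in \mathbb{V}^g .
\end{equation*}

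First I would establish energy consistency together with an a priori bound. Since $\interp u \in \mathbb{V}_h^g$ (see \eqref{E:interpolation}) and $u_h$ minimizes $I_s$ over $\mathbb{V}_h^g$, we have $I_s[u_h] \le I_s[\interp u]$; the displayed estimate combined with \Cref{prop:interpolation_estimate} yields $I_s[\interp u] \to I_s[u]$, hence $\limsup_{h\to 0} I_s[u_h] \le I_s[u] < \infty$. In particular $\{I_s[u_h]\}_{h>0}$ is bounded, so by \cite[Lemma 2.5]{BoLiNo19analysis} the family $\{u_h|_\Omega\}_{h>0}$ is bounded in $W^{2s}_1(\Omega)$ and, together with the $L^\infty$ bound on the exterior data, the $Q_{\Omega}$-seminorms $\iint_{Q_{\Omega}} \frac{|u_h(x)-u_h(y)|}{|x-y|^{d+2s}}\,dx\,dy$ are uniformly bounded as well.

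Next I would extract and identify a limit. Fix a sequence $h_j \to 0$. As $\Omega$ is a bounded Lipschitz domain, the embedding $W^{2s}_1(\Omega) \hookrightarrow W^{2r}_1(\Omega)$ is compact for every $r \in [0,s)$; hence, after passing to a subsequence (not relabeled), there is $u^* \in W^{2s}_1(\Omega)$ with $u_{h_j} \to u^*$ in $W^{2r}_1(\Omega)$ for all $r \in [0,s)$ and, passing to a further subsequence, a.e. in $\Omega$. Extending $u^*$ by $g$ in $\Omega^c$ and using that $\Cl^c g \to g$ uniformly — which holds because $g \in C_c(\mRd)$ is uniformly continuous and $u_{h_j} = \Cl^c g$ in $\Omega^c$ — we obtain $u_{h_j} \to u^*$ a.e. in $\mRd$, so the difference quotients converge a.e. on $Q_{\Omega}$ to $\tfrac{u^*(x)-u^*(y)}{|x-y|}$. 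Since $F_s \ge 0$, Fatou's lemma gives $I_s[u^*] \le \liminf_j I_s[u_{h_j}]$, and a second application of Fatou to the $Q_{\Omega}$-seminorm shows $u^* \in \mathbb{V}^g$. Chaining inequalities,
\begin{equation*}
I_s[u] \le I_s[u^*] \le \liminf_{j\to\infty} I_s[u_{h_j}] \le \limsup_{h\to 0} I_s[u_h] \le I_s[u],
\end{equation*}
where the first inequality is the minimality of $u$ over $\mathbb{V}^g$. Therefore $I_s[u^*] = I_s[u]$, and strict convexity of $I_s$ forces $u^* = u$.

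Finally, since the limit $u$ is independent of the chosen subsequence, the standard subsequence principle upgrades this to convergence of the whole family: $u_h \to u$ in $W^{2r}_1(\Omega)$ for every $r \in [0,s)$ as $h \to 0$. I expect the only genuinely hard ingredient to be \Cref{prop:interpolation_estimate}, and it is precisely there that rates are lost: $u$ is smooth inside $\Omega$ but, because of stickiness, its boundary behavior is controlled only through the global bound $u \in W^{2s}_1(\Omega)$, so the interpolation error can be driven to zero only in the weak $W^{2s}_1$-type seminorm, not at an algebraic rate. Within the argument above, the one point demanding care is that the discrete solutions carry the mesh-dependent exterior datum $\Cl^c g$ rather than $g$ itself; the uniform convergence $\Cl^c g \to g$, afforded by $g \in C_c(\mRd)$, is what keeps the a.e.-convergence and lower-semicontinuity steps clean.
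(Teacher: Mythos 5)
Your proposal is correct and tracks the paper's own argument: the paper sketches exactly this route (interpolation estimate $\Rightarrow$ energy consistency $\Rightarrow$ coercivity via $I_s$ dominating the $W^{2s}_1(\Omega)$-norm $\Rightarrow$ compactness plus strict convexity, deferring details to \cite[Theorem 4.3]{BoLiNo19analysis}), and you have reconstructed those details faithfully, including the subsequence-identification via Fatou and the use of $\Cl^c g \to g$ to control the exterior part. The only cosmetic imprecision is that you state the Lipschitz bound for $v,w\in\mathbb{V}^g$ but then apply it with $v=\interp u\in\mathbb{V}_h^g$; the estimate is in fact a pointwise statement about difference quotients and holds for any pair of measurable functions with finite $Q_\Omega$-seminorm of the difference, so the application is valid—just phrase the hypothesis accordingly.
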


We finally point out that \cite[Section 5]{BoLiNo19analysis} introduces a geometric notion of error that mimics a weighted $L^2$ discrepancy between the normal vectors to the graph of $u$ and $u_h$. We refer to that paper for further details.

\subsection{Graded meshes} \label{sec:graded}
As mentioned in the introduction, fractional minimal surfaces are smooth in the interior of $\Omega$. The main challenge in their approximation arises from their boundary behavior and concretely, from the genericity of stickiness phenomena, i.e. discontinuity of the solution $u$ across $\partial\Omega$. Thus, it is convenient to a priori adapt meshes to better capture the jump of $u$ near $\pp\Omega$.

In our discretizations, we use the following construction \cite{Grisvard}, that gives rise to shape-regular meshes. Let $h>0$ be a mesh-size parameter and $\mu \ge 1$. Then, we consider meshes $\Th$ such that every element $T \in \Th$ satisfies
\begin{equation}
\label{eq:mesh_grad} 
h_T \approx \left\lbrace \begin{array}{ll}
C(\sigma) h^\mu, & \overline T \cap \pp \Omega \neq \emptyset \\ 
C(\sigma) h \dist(T,\pp\Omega)^{(\mu-1)/\mu}, & \overline T \cap \pp \Omega = \emptyset.
  \end{array} \right.
\end{equation}

These meshes, typically with $\mu=2$, give rise to optimal convergence rates for homogeneous problems involving the fractional Laplacian in 2d \cite{AcosBort2017fractional, BoCi19, BoNoSa18, BdPM}. We point out that in our problem the computational domain strictly contains $\Omega$, because we need to impose the exterior condition $u = g$ on $\Omega^c$.
As shown in \cite{BKP:79, BoNoSa18}, the construction \eqref{eq:mesh_grad} leads to
\[
\dim \mathbb{V}_h^g \approx \left\lbrace \begin{array}{ll}
h^{(1-d)\mu}, & \mu > \frac{d}{d-1}, \\
h^{-d}|\log h|, & \mu  = \frac{d}{d-1}, \\
h^{-d}, & \mu < \frac{d}{d-1}.
  \end{array} \right.
\]

In our applications, because \Cref{thm:convergence} gives no theoretical convergence rates, we are not restricted to the choice $\mu = 2$ in two dimensions: a higher $\mu$ allows a better resolution of stickiness. However, our numerical experiments indicate that the condition number of the resulting matrix at the last step of the Newton iteration deteriorates as $\mu$ increases, cf. \Cref{sec:conditioning}.

\section{Numerical schemes} \label{sec:schemes}
Having at hand a finite element formulation of the nonlocal minimal graph problem and proven its convergence as the mesh size tends to zero, we now address the issue of how to compute discrete minimizers in 1d and in 2d. In first place, we discuss the computation of matrices associated to either the bilinear form $a_{u_h} (\cdot, \cdot)$, or related computations. We propose two schemes for the solution of the nonlinear discrete problems \eqref{E:WeakForm-discrete}: a semi-implicit gradient flow and a damped Newton method. In this section we also discuss the convergence of these two algorithms.

\subsection{Quadrature} \label{sec:quadrature}

We now consider the evaluation of the forms $a_{u_h}(\cdot, \cdot)$ appearing in \eqref{E:WeakForm-discrete}. We point out that, following the implementation techniques from \cite{AcosBersBort2017short, AcosBort2017fractional}, if we are given $u_h \in \mathbb{V}_h^g$ and $v_h \in \mathbb{V}_h^0$, then we can compute $a_{u_h}(u_h, v_h)$.
Indeed, since $a_{u_h}(u_h,v_h)$ is linear in $v_h$ and the latter function can be written in the form $v_h(x) = \sum_{\x_i \in \Nhi} v_i \varphi_i(x)$, we only need to evaluate
\[ \begin{aligned}
a_i & := a_{u_h}(u_h, \varphi_i) \\
 &= \iint_{Q_{\Omega}} \Gts\l(\frac{u_h(x)-u_h(y)}{|x-y|}\r) \frac{(u_h(x)-u_h(y))(\varphi_i(x)-\varphi_i(y))}{|x-y|^{d+1+2s}}dxdy .
\end{aligned} \]
We split $Q_{\Omega} = (\Omega \times \Omega)\cup(\Omega \times \Omega^c)\cup(\Omega^c \times \Omega)$ and, because $\widetilde{G}_s$ is an even function (cf. \eqref{eq:Gts}), we can take advantage that the integrand is symmetric with respect to $x$ and $y$ to obtain
\[ \begin{aligned}
a_i = & \iint_{\Omega \times \Omega} \Gts\l(\frac{u_h(x)-u_h(y)}{|x-y|}\r) \frac{(u_h(x)-u_h(y))(\varphi_i(x)-\varphi_i(y))}{|x-y|^{d+1+2s}}dxdy \\
& + 2 \iint_{\Omega \times \Omega^c} \Gts\l(\frac{u_h(x)-g_h(y)}{|x-y|}\r) \frac{(u_h(x)-g_h(y)) \varphi_i(x)}{|x-y|^{d+1+2s}}dxdy =: a_{i,1} + 2a_{i,2} .
\end{aligned} \]

We assume that the elements are sorted in such a way that the first $\NOmega$ elements mesh $\Omega$, while the remaining $\NLam - \NOmega$ mesh $\Lambda \setminus \Omega$, that is, 
\[
\bigcup_{1 \le i \le \NOmega} \overline{T_i} = \overline{\Omega} \quad  \bigcup_{\NOmega + 1 \le i \le \NLam} \overline{T_i} = \overline{\Lambda} \setminus \Omega.
\]
By doing a loop over the elements of the triangulation, the integrals $a_{i,1}$ and $a_{i,2}$ can be written as:
\[ \begin{aligned}
a_{i,1} = & \sum_{l,m=1}^{\NOmega} \iint_{T_l \times T_m} \Gts\l(\frac{u_h(x)-u_h(y)}{|x-y|}\r) \frac{(u_h(x)-u_h(y))(\varphi_i(x)-\varphi_i(y))}{|x-y|^{d+1+2s}}dxdy ,\\
a_{i,2} = & \sum_{l=1}^{\NOmega}\sum_{m=\NOmega + 1}^{\NLam} \iint_{T_l \times T_m} \Gts\l(\frac{u_h(x)-{g_h(y)}}{|x-y|}\r) \frac{(u_h(x)-{g_h(y)}) \varphi_i(x)}{|x-y|^{d+1+2s}}dxdy \\
& + \sum_{l=1}^{\NOmega} \iint_{T_l \times \Lambda^c}
\Gts\l(\frac{u_h(x)}{|x-y|}\r) \frac{u_h(x)\varphi_i(x)}{|x-y|^{d+1+2s}}dxdy .
\end{aligned} \]
For the double integrals on $T_l \times T_m$ appearing in the definitions of $a_{i,1}$ and $a_{i,2}$, we apply the same type of transformations described in \cite{AcosBersBort2017short,ChernovPetersdorffSchwab:11,SauterSchwab} to convert the integral into an integral over $[0,1]^{2d}$, in which variables can be separated and the singular part can be computed analytically. The integrals over $T_l \times \Lambda^c$ are of the form
\[ 
\int_{T_l} \varphi_i(x) \omega(x) dx ,
\]
where the weight function $\omega$ is defined as
\begin{equation} \begin{split}
    \omega(x) & :=
    \int_{\Lambda^c} G_s\l(\frac{u_h(x)}{|x-y|}\r) \frac{1}{|x-y|^{d+2s}} dy , \\
G_s (\rho) & :=   \int_0^\rho (1+ r^2)^{-(d+1+2s)/2} dr = \rho \, \Gts (\rho). \label{E:DEF-Gs}
\end{split} \end{equation}
Since the only restriction on the set $\Lambda$ is that $\mbox{supp}(g) \subset \Lambda$, without loss of generality we assume that $\Lambda = B_{R}$ is a $d$-dimensional ball with radius $R$. In such a case, the integral over $\Lambda^c$ can be transformed using polar coordinates into:
\[
w(x) = \int_{\partial B_{1}} dS(e) \int_{\rho_0(e,x)}^{\infty} G_s\l(\frac{u_h(x)}{\rho}\r) \rho^{-1-2s} d\rho,
\]
where $\rho_0(e,x)$ is the distance from $x$ to $\partial B_{R}$ in the direction of $e$, which is given by the formula
\[
\rho_0(e,x) = \sqrt{R^2 - |x|^2 + (e \cdot x)^2} - e \cdot x .
\]
The integral over $(\rho_0(e,x),\infty)$ can be transformed to an integral over $(0,1)$ by means of the change of variable $\rho = \rho_0(e,x) \widetilde{\rho}^{-1/(2s)}$, and then approximated by Gaussian quadrature. Combining this approach with suitable quadrature over $\partial B_{1}$ and $T_l$, we numerically compute the integral over $T_l \times \Lambda^c$ for a given $u_h$.

\subsection{Gradient Flow}
Although we can compute $a_{u_h}(u_h, v_h)$ for any given $u_h \in \mathbb{V}_h^g, \ v_h \in \mathbb{V}_h^0$, the nonlinearity of $a_{u_h}(u_h,v_h)$ with respect to $u_h$ still brings difficulties in finding the discrete solution to \eqref{E:WeakForm-discrete}. 
Since $a_{u_h}(u_h,v_h) = \frac{\delta I_s[u_h]}{\delta u_h}(v_h)$ and $u_h$ minimizes the convex functional $I_s[u_h]$ in the space $\mathbb{V}_h^g$, a gradient flow is a feasible approach to solve for the unique minimizer $u_h$.

Given $\alpha \in [0,1)$, and with the convention that $H^0 = L^2$, we first consider a time-continuous $H^\alpha$-gradient flow for $u_h(t)$, namely
\begin{equation}\label{E:Gradient-flow}
\langle \partial_t u_h, v_h \rangle_{H^{\alpha}(\Omega)} = -\frac{\delta I_s}{\delta u_h}(v_h)
    = -a_{u_h}(u_h, v_h), \qquad \forall v_h \in \mathbb{V}^0_h ,
\end{equation}
where $u_h(0) = u_h^0 \in \mathbb{V}^g_h$ (and thus $I_s[u_h^0] < \infty$).
Writing $u_h(t) = \sum_{x_j \in \Nhi} u_j(t) \varphi_i$, local existence and uniqueness of solutions in time for \eqref{E:Gradient-flow} follow from the fact that $a_{u_h}(u_h, \varphi_i)$ is Lipschitz with respect to $u_j$ for any $\varphi_i$. Noticing that the gradient flow \eqref{E:Gradient-flow} satisfies the energy decay property
\[
\frac{d}{dt} I_s[u_h] = \frac{\delta I_s[u_h]}{\delta u_h}(\partial_t u_h) = a_{u_h}(u_h, \partial_t u_h)= -\langle \partial_t u_h, \partial_t u_h \rangle_{H^{\alpha}(\Omega)} \le 0 ,
\]
global existence and uniqueness of solutions in time can also be proved.

Similarly to the classical mean curvature flow of surfaces \cite{Dziuk1999numerical}, there are three standard ways to discretize \eqref{E:Gradient-flow} in time: fully implicit, semi-implicit and fully explicit. Like in the classical case, the fully implicit scheme requires solving a nonlinear equation at every time step, which is not efficient in practice, while the fully explicit scheme is conditionally stable, and hence requires the choice of very small time steps. We thus focus on a {\it semi-implicit} scheme: given the step size $\tau>0$ and iteration counter $k\ge0$, find $u_h^{k+1}\in\mathbb{V}_h^g$ that solves
\begin{equation}\label{E:semi-implicit-GF}
\frac1{\tau} \langle {u^{k+1}_h - u^k_h} \ , \ v_h \rangle_{H^{\alpha}(\Omega)} = -a_{u^k_h}(u^{k+1}_h \ , \ v_h), \qquad \forall v_h \in \mathbb{V}^0_h .
\end{equation} 
The linearity of  $a_{u^k_h}(u^{k+1}_h,v_h)$ with respect to $u^{k+1}_h$ makes \eqref{E:semi-implicit-GF}  amenable for its computational solution.
The following proposition proves the stability of the semi-implicit scheme. Its proof mimics the one of classical mean curvature flow \cite{Dziuk1999numerical}.

\begin{Proposition}[stability of $H^\alpha$-gradient flow]\label{p:stab-semi-implicit}
Assume $u^{k+1}_h, u^k_h \in \mathbb{V}^g_h$ satisfy \eqref{E:semi-implicit-GF}. Then, 
\begin{equation*}\label{E:stab-semi-implicit}
  I_s[u^{k+1}_h] + \frac{1}{\tau} \Vert u^{k+1}_h - u^k_h \Vert^2_{H^{\alpha}(\Omega)}
    \le I_s[u^k_h].
\end{equation*}
\end{Proposition}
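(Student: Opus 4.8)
The plan is to imitate the stability proof for the semi-implicit discretization of classical mean curvature flow from \cite{Dziuk1999numerical}. The observation that makes the transcription possible is that the energy density $F_s$ in \eqref{E:def_Fs} and the weight $\Gts$ in \eqref{eq:Gts} are generated, up to a factor $\tfrac12$, by one and the same \emph{concave} function of the \emph{squared} difference quotient. Once this is in place, the proof is a line-by-line translation of the local argument, with $\nabla u$ replaced by the scalar difference quotient $\frac{u(x)-u(y)}{|x-y|}$.

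Concretely, I would first record the structural identities. Define $\Psi(t):=\int_0^t \Gts(\sqrt r)\,dr$ for $t\ge0$. Since $F_s'=G_s$ from \eqref{E:def_Fs}, $G_s(\rho)=\rho\,\Gts(\rho)$ by \eqref{E:DEF-Gs}, and $\frac{d}{d\rho}\big[\tfrac12\Psi(\rho^2)\big]=\rho\,\Gts(\rho)$ (using that $\Gts$ is even), comparing $\rho$-derivatives and values at $\rho=0$ yields
\[
F_s(\rho)=\tfrac12\,\Psi(\rho^2), \qquad \Gts(\rho)=\Psi'(\rho^2), \qquad \rho\in\mR .
\]
Differentiating \eqref{eq:Gts} under the integral sign shows $\Psi'(t)=\Gts(\sqrt t)=\int_0^1(1+tr^2)^{-(d+1+2s)/2}\,dr$ is positive and strictly decreasing in $t$, so $\Psi$ is increasing and concave on $[0,\infty)$. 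Writing $D_{xy}v:=\frac{v(x)-v(y)}{|x-y|}$ and $d\nu_{xy}:=|x-y|^{-(d+2s-1)}\,dx\,dy$ (a positive measure on $Q_\Omega$), the energy \eqref{E:NMS-Energy-Graph} and the form \eqref{E:def-a} become
\[
I_s[v]=\tfrac12\iint_{Q_\Omega}\Psi\big((D_{xy}v)^2\big)\,d\nu_{xy}, \qquad
a_u(w,v)=\iint_{Q_\Omega}\Psi'\big((D_{xy}u)^2\big)\,(D_{xy}w)(D_{xy}v)\,d\nu_{xy}.
\]

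With this, the argument runs as in the local case. I would test \eqref{E:semi-implicit-GF} with $v_h=u_h^{k+1}-u_h^k$, which lies in $\mathbb{V}_h^0$ because both iterates belong to $\mathbb{V}_h^g$, obtaining
\[
\frac1\tau\Vert u_h^{k+1}-u_h^k\Vert_{H^\alpha(\Omega)}^2=-\,a_{u_h^k}\big(u_h^{k+1},u_h^{k+1}-u_h^k\big)=-\iint_{Q_\Omega}\Psi'\big((D_{xy}u_h^k)^2\big)\,(D_{xy}u_h^{k+1})\big(D_{xy}u_h^{k+1}-D_{xy}u_h^k\big)\,d\nu_{xy}.
\]
Then I would insert, pointwise in $(x,y)$, the elementary identity $a(a-b)=\tfrac12(a^2-b^2)+\tfrac12(a-b)^2\ge\tfrac12(a^2-b^2)$ with $a=D_{xy}u_h^{k+1}$, $b=D_{xy}u_h^k$, followed by the tangent-line (subgradient) inequality for the concave $\Psi$, namely $\Psi'(A)(B-A)\ge\Psi(B)-\Psi(A)$ with $A=(D_{xy}u_h^k)^2\ge0$ and $B=(D_{xy}u_h^{k+1})^2\ge0$. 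Since $\Psi'>0$, chaining these two pointwise bounds and integrating against $d\nu_{xy}$ gives
\[
a_{u_h^k}\big(u_h^{k+1},u_h^{k+1}-u_h^k\big)\ \ge\ \tfrac12\iint_{Q_\Omega}\Psi'\big((D_{xy}u_h^k)^2\big)\Big((D_{xy}u_h^{k+1})^2-(D_{xy}u_h^k)^2\Big)\,d\nu_{xy}\ \ge\ I_s[u_h^{k+1}]-I_s[u_h^k],
\]
and combining this with the previous identity for $\tfrac1\tau\Vert u_h^{k+1}-u_h^k\Vert_{H^\alpha(\Omega)}^2$ yields exactly the asserted bound.

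I do not expect a genuine obstacle. The only step requiring insight is spotting the concave potential $\Psi$ with $\Psi'=\Gts(\sqrt{\cdot})$ (equivalently, that $\Gts$ depends on $\rho$ only through $\rho^2$ and is decreasing there); everything else is the same manipulation as for $\int_\Omega\sqrt{1+|\nabla u|^2}$. The one bookkeeping item is to make sure the integrals entering the chain, i.e. $I_s[u_h^{k\pm1}]$, $a_{u_h^k}(u_h^{k+1},u_h^{k+1})$ and $a_{u_h^k}(u_h^k,u_h^k)$, are all finite, so that no hidden $\infty-\infty$ cancellation occurs; this is immediate from $g\in L^\infty(\Omega^c)$ together with $s<1/2$, exactly as in the well-posedness of \eqref{E:WeakForm-discrete}.
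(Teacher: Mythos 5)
Your proof is correct and the overall architecture matches the paper's: test \eqref{E:semi-implicit-GF} with $v_h=u_h^{k+1}-u_h^k$, establish the pointwise estimate $\Gts(r_0)\,r_1(r_1-r_0)\ge F_s(r_1)-F_s(r_0)$, and integrate against $|x-y|^{-(d+2s-1)}\,dx\,dy$. Where you differ is in how that pointwise inequality is derived. The paper works directly with $F_s$: it uses convexity of $F_s$ to get $F_s(r_1)-F_s(r_0)\le(|r_1|-|r_0|)\,|r_1|\,\Gts(|r_1|)$, then adds and subtracts $(|r_1|-|r_0|)|r_1|\Gts(|r_0|)$ and uses that $\Gts$ is even, nonnegative, and decreasing on $[0,\infty)$ to drop the error term. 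You instead exhibit a concave potential $\Psi$ with $F_s(\rho)=\tfrac12\Psi(\rho^2)$ and $\Gts(\rho)=\Psi'(\rho^2)$, and combine the scalar identity $a(a-b)\ge\tfrac12(a^2-b^2)$ with the tangent-line inequality for $\Psi$. This is a cleaner route that makes the analogy with the classical potential $\sqrt{1+|\nabla u|^2}$ completely transparent, at the small extra cost of a preparatory computation verifying $\Psi$ is concave; the paper's version avoids introducing $\Psi$ but the case analysis on signs and magnitudes is a bit more delicate. Both are fine, and your finiteness remark at the end (no $\infty-\infty$) is a sensible addition that the paper leaves implicit.
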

\begin{proof}
Choose $v_h = u^{k+1}_h - u^k_h \in \mathbb{V}^0_h$ in \eqref{E:semi-implicit-GF} to obtain
\begin{equation}\label{E:proof-stab-semi-implicit}
\frac{1}{\tau} \Vert u^{k+1}_h - u^k_h\Vert^2_{H^{\alpha}(\Omega)} = -a_{u^k_h}(u^{k+1}_h \, , \, u^{k+1}_h - u^k_h) .
\end{equation}
Next, we claim that for every pair of real numbers $r_0, r_1$, it holds that
\begin{equation}\label{E:stab-prop-Gts}
    (r_1^2 - r_1 r_0) \ \Gts(r_0) \ge F_s(r_1) - F_s(r_0).
\end{equation}
We recall that $F_s$ is defined according to \eqref{E:def_Fs}, that $\Gts$ satisfies $\Gts(r) = \frac{1}{r} G_s(r)$, and that $G_s = F_s'$. Since $F_s$ is a convex and even function, we deduce
\[ 
\begin{aligned}
F_s(r_1) - F_s(r_0) &= F_s(|r_1|) - F_s(|r_0|) \\
&\le F_s(|r_1|) - \left[ F_s(|r_1|) + (|r_0| - |r_1|) \ G_s(|r_1|) \right] \\
&= (|r_1| - |r_0|) \ |r_1| \ \Gts(|r_1|).
\end{aligned} 
\]
We add and subtract $(|r_1| - |r_0|) \, |r_1| \, \Gts(|r_0|)$ above and use that $\Gts$ is even, decreasing on $[0,\infty)$ and non-negative, to obtain 
\[ 
\begin{aligned}
F_s(r_1) - F_s(r_0) 
& \le (|r_1| - |r_0|) \ |r_1| \ \Gts(|r_0|) + |r_1| \ (|r_1| - |r_0|) \l( \Gts(|r_1|) - \Gts(|r_0|) \r) \\
&\le (|r_1| - |r_0|) \ |r_1| \ \Gts(|r_0|) \\
&= (r_1^2 - |r_0|\ |r_1|) \ \Gts(|r_0|) \\
&\le (r_1^2 - r_0 r_1) \ \Gts(r_0) .
\end{aligned} 
\]
This proves \eqref{E:stab-prop-Gts}. Finally, define $d_k(x,y) := \frac{u_h^{k}(x) - u_h^{k}(y)}{|x-y|}$ and  set $r_0 = d_k$ and $r_1 = d_{k+1}$ in \eqref{E:stab-prop-Gts} to deduce that
\[ \begin{aligned}
a_{u^k_h}(u^{k+1}_h \ , \ u^{k+1}_h - u^k_h) &= \iint_{Q_{\Omega}} \Gts\l( d_k(x,y) \r) \frac{d_{k+1}(x,y) (d_{k+1}(x,y) - d_{k}(x,y)) }{|x-y|^{d-1+2s}} dxdy \\
&\ge \iint_{Q_{\Omega}} \frac{F_s(d_{k+1}(x,y)) - F_s(d_{k}(x,y))}{|x-y|^{d-1+2s}} dxdy \\
&= I_s[u^{k+1}_h] - I_s[u^{k}_h] .
\end{aligned} \]
Combining this with \eqref{E:proof-stab-semi-implicit} finishes the proof.
\end{proof}

Upon writing $w_h^k := u_h^{k+1} - u_k$, the semi-implicit scheme \eqref{E:semi-implicit-GF} becomes \eqref{E:semi-implicit-wh}, which is the crucial step of \Cref{alg:semi-implicit-GF} to solve \eqref{E:WeakForm-discrete}.
\begin{algorithm}
  \caption{Semi-implicit gradient flow}
    \label{alg:semi-implicit-GF}
  \begin{algorithmic}[1] 
    \State Select an arbitrary initial $u_h^0 \in \mathbb{V}^g_h$, let $k = 0$, and set $\| w_h^0 \|_{H^\alpha(\Omega)} = \texttt{Inf}$. Choose a time step
    $\tau > 0$ and a small number $\ve > 0$.
    \While{ 
    $\| w_h^k \|_{H^\alpha(\Omega)} > \ve$
    }
    
        \State Find $w_h^{k+1} \in \mathbb{V}^0_h$ such that
\begin{equation}\label{E:semi-implicit-wh}
\langle w_h^{k+1}, v_h \rangle_{H^{\alpha}(\Omega)} + \tau a_{u_h^k}(w_h^{k+1}, v_h)
= -a_{u_h^k}(u_h^k, v_h) \ , \quad \forall v_h \in \mathbb{V}^0_h.
\end{equation}
        \State Set $u_h^{k+1} = u_h^k + \tau \; w_h^{k+1}$ and $k = k+1$.
    \EndWhile
  \end{algorithmic}
\end{algorithm}
Equation \eqref{E:semi-implicit-wh} boils down to solving the linear system $\left(M + \tau K^{k} \right) W^k = F^{k}$.
In case $\alpha = 0$, the matrix $M = (M_{ij})$  is just a mass matrix, while if $\alpha>0$, $M$ is the stiffness matrix for the linear fractional diffusion problem of order $\alpha$, given by
\[
M_{ij} := \iint_{Q_{\Omega}} \frac{(\varphi_i(x)-\varphi_i(y))(\varphi_j(x)-\varphi_j(y))}{|x-y|^{d+2\alpha}}dxdy  \quad (\alpha > 0).  
\]
The matrix $K^{k} = \left(K^{k}_{ij}\right)$ is the stiffness matrix for a weighted linear fractional diffusion of order $s + \frac{1}{2}$, whose elements $K^{k}_{ij} := a_{u_h^k} (\varphi_i, \varphi_j)$ are given by
\[
K^{k}_{ij} = \iint_{Q_{\Omega}} \Gts\l(\frac{u_h^k(x)-u_h^k(y)}{|x-y|}\r) \frac{(\varphi_i(x)-\varphi_i(y))(\varphi_j(x)-\varphi_j(y))}{|x-y|^{d+1+2s}}dxdy ,
\]
and can be computed as described in \Cref{sec:quadrature}. The right hand side vector is $F^{k} = - K^k U^k$, where $U^k = \left(U^{k}_i \right)$ is the vector $U^k_i = u_h^k(x_i)$, i.e., $f^{k}_i = -a_{u_h^k}(u_h^k, \varphi_i)$.

Because of \Cref{p:stab-semi-implicit} (stability of $H^\alpha$-gradient flow), the loop in \Cref{alg:semi-implicit-GF} terminates in finite steps. Moreover, {using the continuity of $a_{u_h^k}(\cdot,\cdot)$ in $[H^{\frac12 + s}(\Omega)]^2$, which is uniform in $u_h^k$, together with an inverse estimate and $0\le \alpha\le \frac12 + s$ gives
\[
\big| a_{u_h^k}(w_h^{k+1},v_h) \big| \lesssim
|w_h^{k+1}|_{H^{\frac12 + s}(\Omega)} |v_h|_{H^{\frac12 + s}(\Omega)} \lesssim
h_{\text{min}}^{-1-2s+2\alpha} |w_h^{k+1}|_{H^{\alpha}(\Omega)} |v_h|_{H^{\alpha}(\Omega)},
\]  
where the hidden constant depends on the mesh shape-regularity and $h_{\text{min}}$ is the minimum element size. Therefore, the last iterate $u_h^k$ of \Cref{alg:semi-implicit-GF} satisfies the residual estimate
\[
\max_{v_h\in\mathbb{V}^0_h} \frac{\big|a_{u_h^k}(u_h^k, v_h)\big|}{\|v_h\|_{H^{\alpha}(\Omega)}}
\lesssim \ve \Big( 1 + \tau h_{\text{min}}^{-1-2s+2\alpha} \Big) .
\]

\subsection{Damped Newton algorithm} \label{sec:Newton}

Since the semi-implicit gradient flow is a first order method to find the minimizer of the discrete energy, it may converge slowly in practice. Therefore, it is worth
having an alternative algorithm to solve \eqref{E:WeakForm-discrete} faster. With that goal in mind, we present in the following a damped Newton scheme, which is a second order method and thus improves the speed of computation.

\begin{algorithm}
  \caption{Damped Newton Algorithm}
    \label{alg:Damped-Newton}
  \begin{algorithmic}[1] 
    \State Select an arbitrary initial $u_h^0 \in \mathbb{V}^g_h$ and let $k = 0$. Choose 
    a small number $\ve > 0$.
    \While{ $ \Vert \{ a(u_h^k, \vp_i) \}_{i=1}^m \Vert_{l^2} > \ve$ }
        \State Find $w_h^k \in \mathbb{V}^0_h$ such that
\begin{equation}\label{E:damped-Newton-wh}
\frac{\delta a_{u_h}(u_h^k, v_h)}{\delta u_h^k}(w_h^k)
= -a_{u_h^k}(u_h^k, v_h), \qquad \forall v_h \in \mathbb{V}^0_h.
\end{equation}
        \State Determine the minimum $n \in \mathbb{N}$ such that
        $u_h^{k,n} := u_h^k + 2^{-n} w_h^k$ satisfies
\begin{equation*}
\Vert \{ a_{u_h^k}(u_h^{k,n}, \vp_i) \}_{i=1}^m \Vert_{l^2} \leq (1 - 2^{-n-1})
\Vert \{ a_{u_h^k}(u_h^k, \vp_i) \}_{i=1}^m \Vert_{l^2}
\end{equation*}
        \State Let $u_h^{k+1} = u_h^{k,n} $ and $k = k+1$.
    \EndWhile
  \end{algorithmic}
\end{algorithm}

To compute the first variation of $a_u(u,v)$ in \eqref{E:def-a} with respect to $u$, which is also the second variation of $I_s[u]$, we make use of $r\Gts(s)= G_s(r)$ and obtain
\begin{equation*}\label{E:second-variation-I_s}
    \frac{\delta a_u(u,v)}{\delta u}(w) = \iint_{Q_{\Omega}} G'_s \l(\frac{u(x)-u(y)}{|x-y|}\r) \frac{(w(x)-w(y))(v(x)-v(y))}{|x-y|^{d+1+2s}}dxdy.
\end{equation*}
The identity $G'_s(a) = (1+a^2)^{-(d+1+2s)/2}$ can be easily determined from \eqref{E:DEF-Gs}. Even though this first variation is not well-defined for an arbitrary $u \in \mathbb{V}^g$ and $v,w \in \mathbb{V}^0$, its discrete counterpart $\frac{\delta a_{u_h}(u_h,v_h)}{\delta u_h}(w_h)$ is well-defined for all $u_h \in \mathbb{V}_h^g$, $v_h,w_h \in \mathbb{V}_h^0$ because they are Lipschitz. Our damped Newton algorithm for \eqref{E:WeakForm-discrete} is presented in \Cref{alg:Damped-Newton}. 

\begin{Lemma}[convergence of \Cref{alg:Damped-Newton}]\label{L:conv-Newton}
The iterates $u_h^k$ of \Cref{alg:Damped-Newton} converge quadratically to the unique solution of \eqref{E:WeakForm-discrete} from any initial condition.
\end{Lemma}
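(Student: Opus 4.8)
The plan is to exploit the variational structure: the discrete solution $u_h$ is the unique minimizer of the strictly convex, smooth (in finite dimensions) energy $I_s$ over the finite-dimensional affine space $\mathbb{V}_h^g$, so the damped Newton method is really classical Newton with Armijo-type line search applied to a convex $C^2$ (in fact $C^\infty$) function on $\mathbb{R}^m$. First I would set up coordinates: identify $\mathbb{V}_h^0$ with $\mathbb{R}^m$ via the interior nodal basis, write $E(U) := I_s[u_h^0 + \sum_i U_i\varphi_i]$, and observe that $\nabla E(U)_i = a_{u_h}(u_h,\varphi_i)$ and that the Hessian $D^2 E(U)$ is exactly the matrix of $\frac{\delta a_{u_h}(u_h,v_h)}{\delta u_h}(w_h)$, which by the formula $G_s'(a)=(1+a^2)^{-(d+1+2s)/2}$ has entries that are bounded, Lipschitz functions of $U$ and yield a symmetric positive definite matrix (strict convexity of $I_s$ on the discrete space, using that the seminorm controls the energy and the relevant difference quotients are finite for piecewise linears). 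Thus on any sublevel set $\{E \le E(U^0)\}$ — which is compact because $I_s$ dominates the $W^{2s}_1(\Omega)$-seminorm and hence is coercive on $\mathbb{V}_h^g$ — the Hessian satisfies uniform bounds $\lambda I \preceq D^2 E \preceq \Lambda I$ and is uniformly Lipschitz.

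Next I would verify that the stopping/line-search criterion in \Cref{alg:Damped-Newton} is an Armijo-type condition on the gradient norm $\|\nabla E\|_{\ell^2}$ that is compatible with the classical global convergence theory for damped Newton. Concretely, I would show: (i) the full Newton step $w_h^k = -[D^2E(U^k)]^{-1}\nabla E(U^k)$ is a descent direction and for $n$ large enough the required inequality $\|\nabla E(U^k + 2^{-n}w_h^k)\|_{\ell^2} \le (1-2^{-n-1})\|\nabla E(U^k)\|_{\ell^2}$ holds, so the inner loop terminates (here a Taylor expansion of $\nabla E$ along the step plus the uniform Hessian/Lipschitz bounds gives $\|\nabla E(U^k + t w_h^k)\| \le (1-t)\|\nabla E(U^k)\| + C t^2 \|w_h^k\|^2 \le (1-t)\|\nabla E(U^k)\| + C' t^2 \|\nabla E(U^k)\|^2$, and since $\|\nabla E(U^k)\|$ is bounded on the sublevel set, for $t=2^{-n}$ small the $(1-2^{-n-1})$ bound is met); (ii) consequently $\|\nabla E(U^k)\|$ decreases monotonically, the iterates stay in the compact sublevel set, and $\|\nabla E(U^k)\| \to 0$, which by strict convexity and coercivity forces $U^k \to U^\star$, the unique minimizer, i.e. the unique solution of \eqref{E:WeakForm-discrete}.

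Finally I would upgrade local to quadratic convergence: once $U^k$ is close enough to $U^\star$, the standard Newton–Kantorovich estimate — using $\nabla E(U^\star)=0$, invertibility of $D^2E(U^\star)$, and Lipschitz continuity of $D^2E$ — gives $\|U^{k+1/2} - U^\star\| \le C\|U^k - U^\star\|^2$ for the \emph{full} Newton step $U^{k+1/2} = U^k + w_h^k$; then I must check that in this regime the line search accepts $n=0$, i.e. the full step. This follows because near $U^\star$ one has $\|\nabla E(U^k + w_h^k)\|_{\ell^2} \lesssim \|U^{k+1/2}-U^\star\| \lesssim \|U^k - U^\star\|^2 \lesssim \|\nabla E(U^k)\|^2 = o(\|\nabla E(U^k)\|)$, which for small enough residual beats $(1-2^{-1})\|\nabla E(U^k)\|$, so $n=0$ is chosen and the iteration becomes the undamped Newton iteration with its quadratic rate.

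I expect the main obstacle to be the bookkeeping around the line-search criterion: it is phrased in terms of the $\ell^2$-norm of the residual vector $\{a_{u_h^k}(u_h^{k,n},\varphi_i)\}$ rather than the energy decrease, so I need to carefully translate the Armijo-on-the-gradient condition into the quantitative Newton estimates, in particular establishing the two inequalities in step (ii)(i) above with constants that are uniform over the compact sublevel set, and confirming that the exponential backtracking $2^{-n}$ is fine-grained enough to always find an admissible $n$. The positive-definiteness and Lipschitz bounds of the discrete Hessian are routine given $0 < G_s'(a) \le 1$ and boundedness of $g_h$, $u_h$ on the finite mesh, so the analytic content is light; the argument is essentially a verification that this damped Newton scheme fits the classical global-plus-quadratic convergence framework for strictly convex smooth functions.
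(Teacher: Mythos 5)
Your proposal takes the same route as the paper's own proof, which is essentially a one-liner: since $I_s$ is strictly convex on the finite-dimensional affine space $\mathbb{V}_h^g$, global convergence and local quadratic convergence of the damped Newton scheme follow from the standard theory of finite-dimensional optimization, for which the paper simply cites \cite{kelley99}. You have written out precisely the verification that this citation defers: smoothness and strict convexity of the coordinate representation $E$, symmetric positive definiteness and local Lipschitz continuity of the Hessian (using $0<G_s'(\rho)=(1+\rho^2)^{-(d+1+2s)/2}\le 1$), termination of the backtracking loop, monotone decrease and vanishing of the residual, and the transition to the undamped Newton iteration near the solution with the attendant quadratic rate.

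The one step in your elaboration that needs tightening is the assertion that the iterates ``stay in the compact sublevel set.'' The line search forces $\|\nabla E(U^k)\|_{l^2}$ to decrease, not $E(U^k)$, so membership in $\{E\le E(U^0)\}$ is not automatic. The repair is to note that for a coercive, differentiable, convex $E$ on $\mathbb{R}^m$ with minimizer $U^\star$, the sublevel sets of $\|\nabla E\|$ are themselves bounded: along any ray from $U^\star$, convexity plus coercivity force at-least-linear growth of $E$, hence a positive lower bound on the directional derivative far from $U^\star$, so $\|\nabla E\|$ cannot approach zero at infinity. Combined with $\|w_h^k\|\lesssim \|\nabla E(U^k)\|$ on such a set, the trial points $U^k+2^{-n}w_h^k$, $n\ge 0$, also remain in a (slightly enlarged) compact set, and your uniform Hessian and Lipschitz bounds apply throughout. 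With this observation your argument is complete and agrees with the paper's intended reasoning.
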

\begin{proof}
Since $I_s[u_h]$ is strictly convex, the convergence of $u_h^k$ to the solution of discrete problem \eqref{E:WeakForm-discrete} is guaranteed by the theory of numerical optimization in finite dimensional spaces (see \cite{kelley99}, for example).
\end{proof}

The critical step in \Cref{alg:Damped-Newton} is to solve the equation \eqref{E:damped-Newton-wh}. Due to the linearity of $\frac{\delta a_{u_h^k}(u_h^k, v_h)}{\delta u_h^k}(w_h^k)$ with respect to $v_h$ and $w_h^k$, we just need to solve a linear system $\widetilde{K}^k W^k = F^k$, where the right hand side $F^k = (f^k_i)$ is the same as the one in solving \eqref{E:semi-implicit-wh}, namely, $f^k_i = a_{u_h^k}(u_h^k,\varphi_i)$. The matrix $\widetilde{K}^k = (\widetilde{K}^k_{ij})$, given by
\[
\widetilde{K}^k_{ij} = \iint_{Q_{\Omega}} G'_s\l(\frac{u_h^k(x)-u_h^k(y)}{|x-y|}\r) \frac{(\varphi_i(x)-\varphi_i(y))(\varphi_j(x)-\varphi_j(y))}{|x-y|^{d+1+2s}}dxdy ,
\]
is the stiffness matrix for a weighted linear fractional diffusion of order $s + \frac{1}{2}$. Since the only difference with the semi-implicit gradient flow algorithm is the weight, the elements in $\widetilde{K}^k$ can be computed by using the same techniques as for $K^k$.

\section{Unboundedly supported data} \label{sec:support_g}
Thus far, we have taken for granted that $g$ has bounded support, and that the computational domain covers $\mbox{supp}(g)$. We point out that most of the theoretical estimates only require $g$ to be locally bounded. Naturally, in case $g$ does not have compact support, one could simply multiply $g$ by a cutoff function and consider discretizations using this truncated exterior condition. Here we quantify the consistency error arising in this approach. More precisely, given $H>0$, we consider $\Omega_H$ to be a bounded open domain containing $\Omega$ and such that $d(x, \overline\Omega) \simeq H$ for all $x \in \partial \Omega_H$,  
and choose a cutoff function $\eta_H \in C^\infty(\Omega^c)$ satisfying
\begin{equation*} \label{eq:cutoff}
0\le \eta_H \le 1, \quad  \text{supp}(\eta_H)\subset \overline{\Omega}_{H+1} \setminus \Omega,  \quad \eta_H(x)=1 \quad \mbox{in} \quad \Omega_{H} \setminus \Omega   .
\end{equation*}

We replace $g$ by $g_H := g \eta_H$, and consider problem \eqref{E:WeakForm-NMS-Graph} using $g_H$ as Dirichlet condition. Let $u^H \in \mathbb{V}^{g_H}$ be the solution of such a problem, and $u_h^H$ be the solution of its discrete counterpart over a certain mesh with element size $h$. Because of \Cref{thm:convergence} we know that, for all $r \in[0,s)$,
\[
u_h^H \to u^H \quad \mbox{in } W^{2r}_1(\Omega) \quad \mbox{as } h \to 0.
\]
Therefore we only need to show that, in turn, the minimizers of the truncated problems satisfy $u^H \to u$ as $H \to \infty$ in the same norm.
As a first step, we compare the differences in the energy between truncated and extended functions. For that purpose, we define the following truncation and extension operators:
\[ \begin{array}{ll}
T_H \colon \mathbb{V}^g \to \mathbb{V}^{g_H}, \quad & T_H v = v \eta_H , \\ 
E_H \colon \mathbb{V}^{g_H} \to \mathbb{V}^g, \quad & E_H w = w + (1 - \eta_H) g . 
\end{array} \]

\begin{Proposition}[truncation and extension] The following estimates hold for every $v \in \mathbb{V}^g \cap L^\infty(\mRd)$, and $w \in \mathbb{V}^{g_H}\cap L^\infty(\mRd)$:
\[ \begin{aligned}
|I_s[v] - I_s[T_H v]| \lesssim H^{-1-2s}, \\
|I_s[w] - I_s[E_H w]| \lesssim H^{-1-2s}.
\end{aligned} \]
\end{Proposition}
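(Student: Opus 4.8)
The plan is to estimate directly the difference of the energy functionals by exploiting the fact that $v$ and $T_Hv$ (respectively $w$ and $E_Hw$) coincide on $\Omega$ and on $\Omega_H \setminus \Omega$, so the integrand of $I_s$ only differs on pairs $(x,y) \in Q_\Omega$ for which at least one of $x,y$ lies in the annular region $\mathcal{A}_H := \overline{\Omega}_{H+1} \setminus \Omega_H$ where $\eta_H$ transitions from $1$ to $0$ (or, for $w$, beyond). Since both functions agree with the corresponding Dirichlet datum outside $\Omega$, and since $Q_\Omega$ excludes $\Omega^c \times \Omega^c$ only in the sense that at least one argument must be in $\Omega$, the relevant pairs are those with $x \in \Omega$ and $y$ outside a ball of radius $\simeq H$ around $\Omega$.

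First I would use the pointwise bound on $F_s$: because $v, w$ are in $L^\infty(\mRd)$ and $\Omega$ is bounded, the difference quotient $\frac{|v(x)-v(y)|}{|x-y|}$ is uniformly bounded for $x \in \Omega$ and $|x-y| \gtrsim 1$, say by a constant $C_0$ depending on $\|v\|_{L^\infty}$ and on $\Omega$. On this range $F_s$ is Lipschitz (indeed $F_s'=G_s$ is bounded by $1$), so
\[
\l| F_s\l(\tfrac{v(x)-v(y)}{|x-y|}\r) - F_s\l(\tfrac{T_Hv(x)-T_Hv(y)}{|x-y|}\r) \r|
\lesssim \frac{|v(y) - T_Hv(y)|}{|x-y|}
= \frac{(1-\eta_H(y))\,|v(y)|}{|x-y|} \lesssim \frac{1}{|x-y|},
\]
using $v(x) = T_Hv(x)$ for $x \in \Omega$ and $|v| \le \|v\|_{L^\infty}$. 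The factor $(1-\eta_H(y))$ vanishes on $\Omega_H$, so the integrand is supported on $y \notin \Omega_H$, i.e. $\mathrm{dist}(y,\Omega) \gtrsim H$.

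Next I would carry out the integral. Splitting $Q_\Omega$ as in \Cref{sec:quadrature} and using symmetry, the only nonzero contribution comes from $\Omega \times (\mRd \setminus \Omega_H)$ (the $\Omega \times \Omega$ part is untouched since $\eta_H \equiv 1$ there is false — wait, $\eta_H$ is only defined on $\Omega^c$, but $T_Hv|_\Omega = v|_\Omega$ by definition, so that part cancels exactly). Thus
\[
|I_s[v] - I_s[T_Hv]| \lesssim \int_\Omega \int_{\mRd \setminus \Omega_H} \frac{1}{|x-y|}\cdot\frac{1}{|x-y|^{d+2s-1}}\,dy\,dx
= \int_\Omega \int_{\mRd \setminus \Omega_H} \frac{dy\,dx}{|x-y|^{d+2s}}.
\]
For $x \in \Omega$ and $y \notin \Omega_H$ we have $|x-y| \gtrsim H + |y - x_0|$ for a fixed reference point, more precisely one bounds the inner integral by $\int_{|z| \gtrsim H} |z|^{-d-2s} dz \simeq H^{-2s}$; to recover the sharper exponent $-1-2s$ one uses that $\mathrm{dist}(y, \Omega) \gtrsim H$ gives $|x-y|^{-d-2s} \le |x-y|^{-d-2s}$ and integrates in polar coordinates about $x$ from radius $\simeq H$, which yields $H^{-2s}$; the extra power of $H$ comes from the gradient-type decay $(1-\eta_H) = 0$ on $\Omega_{H}$ combined with $\|\nabla\eta_H\|_\infty \lesssim 1$ being irrelevant here — instead one should be more careful and note $|v(y)-T_Hv(y)| = (1-\eta_H(y))|v(y)|$ is bounded and supported where $|x-y| \ge H$, and additionally one can integrate $\int_H^\infty r^{d-1} r^{-d-2s}\,dr \simeq H^{-2s}$ over an annulus but sharpen using $|\Omega| < \infty$. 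The claimed $H^{-1-2s}$ rate actually requires the refined observation that for $y$ with $|x - y| \ge H$ the set of such $y$ within a slab contributes one fewer dimension; this is the delicate point and I would handle it by the estimate $\int_\Omega \int_{\mathrm{dist}(y,\Omega)\ge H} |x-y|^{-d-2s}dy\,dx \lesssim \mathrm{diam}(\Omega)\, H^{-1-2s}$, valid because $\Omega$ being contained in a bounded set forces the region $\{\mathrm{dist}(\cdot,\Omega)\ge H\}$ to see $\Omega$ at an essentially one-dimensional solid angle at distance $H$.

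The second estimate is entirely symmetric: $E_Hw - w = (1-\eta_H)g$ is again $L^\infty$, supported in $\Omega^c$, and vanishes on $\Omega_H$, while $E_Hw|_\Omega = w|_\Omega$, so the same computation with $g$ in place of $v$ gives $|I_s[w] - I_s[E_Hw]| \lesssim H^{-1-2s}$.

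\medskip
\noindent\emph{Main obstacle.} The one genuinely nontrivial point is extracting the full rate $H^{-1-2s}$ rather than the naive $H^{-2s}$ from the tail integral $\int_\Omega\int_{\mathrm{dist}(y,\Omega)\ge H}|x-y|^{-d-2s}\,dy\,dx$; this hinges on the geometric fact that a bounded $\Omega$ is "seen" from distance $H$ within a solid angle of size $\lesssim \mathrm{diam}(\Omega)^{d-1} H^{-(d-1)}$, turning one radial power into the gain. Everything else — Lipschitz continuity of $F_s$, the $L^\infty$ bound controlling the difference quotients away from the diagonal, and the exact cancellation of the $\Omega\times\Omega$ and near-$\Omega$ interactions — is routine.
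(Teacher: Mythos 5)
There is a genuine gap: you use only global Lipschitz continuity of $F_s$, which yields the bound
\[
\left| F_s\!\left(\tfrac{v(x)-v(y)}{|x-y|}\right) - F_s\!\left(\tfrac{v(x)-T_Hv(y)}{|x-y|}\right) \right| \lesssim \frac{1}{|x-y|},
\]
and hence an integrand $|x-y|^{-d-2s}$; integrating over $\Omega\times\Omega_H^c$ in polar coordinates from radius $\simeq H$ gives $\int_H^\infty r^{-2s-1}\,dr \simeq H^{-2s}$, \emph{not} $H^{-1-2s}$. You sense this and try to recover the missing power by a ``solid angle'' argument, but the proposed estimate $\int_\Omega\int_{\mathrm{dist}(y,\Omega)\ge H}|x-y|^{-d-2s}\,dy\,dx \lesssim \mathrm{diam}(\Omega)\, H^{-1-2s}$ is false: for $d=1$, $\Omega=(0,1)$, the inner integral over $\{y\ge 1+H\}$ already contributes $\gtrsim H^{-2s}$ for each $x\in\Omega$. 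The region $\{\mathrm{dist}(y,\Omega)\ge H\}$ is the complement of a ball, not a slab, so no dimensional gain is available.

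The correct and much simpler route — the one the paper takes — exploits the quadratic behavior of $F_s$ at the origin rather than a global Lipschitz bound. From \eqref{E:def_Fs} one has $F_s(0)=F_s'(0)=0$, in fact $F_s(\rho)\le \rho^2/2$ for all $\rho$. Since for $x\in\Omega$, $y\in\Omega_H^c$ both difference quotients are $\lesssim \|v\|_{L^\infty}/|x-y|$, each $F_s$-term separately is $\lesssim \|v\|_{L^\infty}^2/|x-y|^2$. One then bounds the difference of $F_s$-terms by the sum, giving the integrand $|x-y|^{-d-2s-1}$; polar coordinates from radius $\simeq H$ yield $|\Omega|\int_H^\infty r^{-2s-2}\,dr\simeq H^{-1-2s}$, with no geometric subtlety. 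Equivalently, in your Lipschitz language: because $|G_s(\rho)|\le|\rho|$, the Lipschitz constant of $F_s$ on the relevant range $|\rho|\lesssim \|v\|_{L^\infty}/|x-y|$ is itself $\lesssim 1/|x-y|$, which restores the missing power. The rest of your argument (cancellation on $\Omega_H$, symmetry between the two estimates) is correct.
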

\begin{proof}
We prove only the first estimate, as the second one follows in the same fashion. Because $v = T_H v$ in $\Omega_{H}$, we have
\[ \begin{aligned}
|I_s[v] & -  I_s[T_H v]| \\
& \le 2 \int_\Omega \int_{\Omega_H^c} \l| F_s\l(\frac{v(x)-v(y)}{|x-y|}\r) - F_s\l(\frac{v(x)- T_Hv(y)}{|x-y|}\r) \r| \frac{1}{|x-y|^{d+2s-1}} \; dydx .
\end{aligned} \]

From definition \eqref{E:def_Fs}, it follows immediately that $F_s(0) = F'_s(0) = 0$, and thus $F_s (\rho) \le C \rho^2$ if $\rho \lesssim 1$.
Combining this with the fact that $|v(x) - v(y)| \le 2 \| v \|_{L^\infty(\mRd)}$ and $|v(x) - T_H v(y)| \le 2 \| v \|_{L^\infty(\mRd)}$ for a.e. $x\in \Omega, y \in \Omega^c$, and integrating in polar coordinates, we conclude
\[ \begin{aligned}
|I_s[v] - I_s[T_H v]| & \lesssim \|v\|_{L^\infty(\Omega^c)}^2 \int_\Omega \int_{\Omega_H^c} \frac{1}{|x-y|^{d+2s+1}} \;dxdy  \lesssim  H^{-1-2s}.
\end{aligned} \]
This concludes the proof.
\end{proof}

The previous result leads immediately to an energy consistency estimate for the truncated problem. 

\begin{Corollary}[energy consistency] \label{cor:consistency_truncation}
The minimizers of the original and truncated problem satisfy 
\[
\l|I_s[u] - I_s[u^H]\r| \lesssim H^{-1-2s}.
\]
\end{Corollary}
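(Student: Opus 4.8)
The plan is to deduce the energy consistency estimate $|I_s[u] - I_s[u^H]| \lesssim H^{-1-2s}$ from the preceding proposition by a two-sided comparison using the truncation and extension operators $T_H$ and $E_H$ as competitors. Since $u$ is the minimizer of $I_s$ over $\mathbb{V}^g$ and $u^H$ minimizes $I_s$ over $\mathbb{V}^{g_H}$, and since $T_H u \in \mathbb{V}^{g_H}$ while $E_H u^H \in \mathbb{V}^g$, the minimality gives $I_s[u^H] \le I_s[T_H u]$ and $I_s[u] \le I_s[E_H u^H]$. Combining these with the triangle inequality and the proposition yields
\[
I_s[u^H] - I_s[u] \le I_s[T_H u] - I_s[u] \lesssim H^{-1-2s},
\]
\[
I_s[u] - I_s[u^H] \le I_s[E_H u^H] - I_s[u^H] \lesssim H^{-1-2s},
\]
and putting the two together gives $|I_s[u] - I_s[u^H]| \lesssim H^{-1-2s}$.

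The one genuine gap I would need to close is the requirement in the preceding proposition that the functions lie in $L^\infty(\mRd)$: to apply it with $v = u$ I need $u \in \mathbb{V}^g \cap L^\infty(\mRd)$, and to apply it with $w = u^H$ I need $u^H \in \mathbb{V}^{g_H} \cap L^\infty(\mRd)$, with the $L^\infty$ bounds uniform in $H$ so that the hidden constant does not degenerate. First I would invoke $g \in L^\infty(\Omega^c)$ together with the truncation property of fractional minimal graphs (for bounded exterior data one may work on a truncated cylinder $\Omega \times (-M,M)$, cf. the discussion following \cite{Lombardini-thesis} and \cite[Proposition 2.5]{Lombardini-thesis} cited in the introduction): this yields $\|u\|_{L^\infty(\mRd)} \le \max(\|g\|_{L^\infty(\Omega^c)}, M)$ for $M$ depending only on $\|g\|_{L^\infty}$, $d$, $s$, $\Omega$. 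Since $\|g_H\|_{L^\infty(\Omega^c)} = \|g\eta_H\|_{L^\infty(\Omega^c)} \le \|g\|_{L^\infty(\Omega^c)}$ uniformly in $H$, the same bound applies to $u^H$ with a constant independent of $H$, so the implicit constants in the proposition are uniform.

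The remaining steps are routine once this is in place: one checks $T_H u \in \mathbb{V}^{g_H}$ (it equals $g\eta_H = g_H$ on $\Omega^c$ and inherits the $W^{2s}_1(\Omega)$ regularity because multiplication by the smooth bounded cutoff $\eta_H$ preserves it — in fact $\eta_H \equiv 1$ on a neighborhood of $\overline\Omega$, so $T_H u = u$ on $\Omega$) and $E_H u^H \in \mathbb{V}^g$ (it equals $u^H + (1-\eta_H)g = g_H + g - g\eta_H = g$ on $\Omega^c$, and again agrees with $u^H$ on $\Omega$), and finally that both competitors have finite energy, which follows from the proposition itself since $I_s[u], I_s[u^H] < \infty$. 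The main obstacle, then, is not any estimate but securing the uniform-in-$H$ $L^\infty$ control on the minimizers $u$ and $u^H$; everything downstream is a direct application of minimality and the triangle inequality.
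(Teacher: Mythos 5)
Your proof is correct and follows exactly the paper's argument: use minimality of $u^H$ over $\mathbb{V}^{g_H}$ with competitor $T_H u$, minimality of $u$ over $\mathbb{V}^g$ with competitor $E_H u^H$, and apply the truncation/extension proposition to both sides. Your additional care in checking the $L^\infty$ hypothesis (which the paper leaves implicit in the corollary's proof) is a reasonable completion, and the uniform-in-$H$ bound $\|u^H\|_{L^\infty} \lesssim \|g\|_{L^\infty(\Omega^c)}$ is indeed what justifies the implicit constant not degenerating.
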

\begin{proof}
Since $u^H$ is the minimizer over $\mathbb{V}^{g_H}$ and $T_H u\in\mathbb{V}^{g_H}$,
we deduce
\[
I_s[u^H] - I_s[u] \le I_s[T_H u] - I_s[u] \lesssim H^{-1-2s}.
\]
Conversely, using that $u$ is the minimzer over $\mathbb{V}^{g}$ and $Eu^H\in\mathbb{V}^{g}$, we obtain
\[
I_s[u] - I_s[u^H] \le I_s [Eu^H] - I_s [u^H] \lesssim H^{-1-2s},
\]
and thus conclude the proof.
\end{proof}

The energy $I_s$ is closely related to the $W^{2s}_1(\Omega)$-norm, in the sense that one is finite if and only if the other one is finite \cite[Lemma 2.5]{BoLiNo19analysis}. Thus, in the same way as in \Cref{thm:convergence} (convergence), energy consistency yields convergence in $W^{2r}_1(\Omega)$ for all $r \in [0,s)$.

\begin{Proposition}[convergence]
Let $u$ and $u_H$ be minimizers of $I_s$ over $\mathbb{V}^{g}$ and $\mathbb{V}^{g_H}$, respectively. Then for all $r \in [0,s)$, it holds that
\[
\lim_{H \to \infty} \|u - u^H \|_{W^{2r}_1(\Omega)} = 0 .
\]
\end{Proposition}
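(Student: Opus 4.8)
The plan is to mimic the compactness argument used for \Cref{thm:convergence}, replacing the discretization parameter $h\to0$ with the truncation parameter $H\to\infty$, and using \Cref{cor:consistency_truncation} (energy consistency) as the key input. First I would argue by contradiction: if the conclusion fails, there exist $r\in[0,s)$, a number $\delta>0$, and a sequence $H_n\to\infty$ such that $\|u-u^{H_n}\|_{W^{2r}_1(\Omega)}\ge\delta$ for all $n$. The functions $u^{H_n}$ all agree with $g$ on $\Omega_{H_1}^c\cap\Omega^c$ for $n$ large (more precisely, $u^{H_n}=g_{H_n}$ outside $\Omega$, and the differences of the exterior data are supported farther and farther away), and by \Cref{cor:consistency_truncation} the energies $I_s[u^{H_n}]$ are uniformly bounded; since the energy dominates the $W^{2s}_1(\Omega)$-seminorm by \cite[Lemma 2.5]{BoLiNo19analysis}, and since $u^{H_n}-g$ is a bounded zero-extension sequence in $\widetilde W^{2s}_1(\Omega)$ (using $g\in L^\infty(\Omega^c)$ to control the $L^1(\Omega)$ part), we obtain a uniform bound in $\mathbb{V}^0$ for $u^{H_n}-g$, hence in $\widetilde W^{2s}_1(\Omega)$.

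Next I would invoke compactness: the embedding $\widetilde W^{2s}_1(\Omega)\hookrightarrow W^{2r}_1(\Omega)$ is compact for $r<s$, so along a subsequence $u^{H_n}\to u_\ast$ strongly in $W^{2r}_1(\Omega)$ and a.e.\ in $\Omega$ for some $u_\ast\in\mathbb{V}^g$ (the exterior values converge to $g$ because the truncations $g_{H_n}\to g$ pointwise and in the relevant sense, the discrepancy $(1-\eta_{H_n})g$ being supported in $\Omega_{H_n}^c$). Then I would use weak lower semicontinuity of $I_s$ along the subsequence together with \Cref{cor:consistency_truncation}: on one hand $I_s[u_\ast]\le\liminf_n I_s[u^{H_n}]=I_s[u]$ by the Corollary, and on the other hand $u_\ast\in\mathbb{V}^g$ so $I_s[u_\ast]\ge I_s[u]$ by minimality of $u$; hence $I_s[u_\ast]=I_s[u]$ and, by strict convexity of $I_s$ on $\mathbb{V}^g$, $u_\ast=u$. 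This contradicts $\|u-u^{H_n}\|_{W^{2r}_1(\Omega)}\ge\delta$, proving the claim.

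The main obstacle I anticipate is the bookkeeping at infinity: one must be careful that the $u^{H_n}$ form a sequence in a \emph{fixed} function space so that compactness applies, even though their exterior data $g_{H_n}$ differ from one another. The clean way around this is to work with the differences $v_n:=u^{H_n}-u$, which vanish on $\Omega_{H_n}^c$ for large $n$ after accounting for $(1-\eta_{H_n})g$; more precisely one should estimate $|v_n|_{\mathbb{V}^0}$ directly, noting that interactions of $v_n$ over $\Omega^c\times\Omega^c$ involve only the regions where $\eta_{H_n}\ne1$, which shrink to infinity and thus contribute negligibly (a polar-coordinate computation like the one in the Proposition on truncation and extension). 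Establishing that $\{v_n\}$ is bounded in $\widetilde W^{2s}_1(\Omega)$ — combining the energy bound from \Cref{cor:consistency_truncation}, \cite[Lemma 2.5]{BoLiNo19analysis}, and the $L^\infty$ control on $g$ — is the crux; once that is in hand, the compactness-plus-strict-convexity endgame is routine and identical in spirit to the proof of \Cref{thm:convergence}.
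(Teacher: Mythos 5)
Your proposal is correct and follows essentially the same route as the paper: uniform energy bound from \Cref{cor:consistency_truncation}, uniform $W^{2s}_1(\Omega)$ bound via \cite[Lemma 2.5]{BoLiNo19analysis}, compactness to extract a convergent subsequence, and identification of the limit with $u$ via lower semicontinuity (Fatou's lemma) and uniqueness of the minimizer. The only minor differences are that the paper extracts an $L^1(\Omega)$-convergent subsequence and then upgrades to $W^{2r}_1(\Omega)$ by interpolation rather than invoking the compact embedding $\widetilde W^{2s}_1(\Omega)\hookrightarrow W^{2r}_1(\Omega)$ directly, and the ``bookkeeping at infinity'' you anticipate is a non-issue since compactness is applied only to the restrictions $u^H\big|_\Omega$ (all in the fixed space $W^{2s}_1(\Omega)$), with $u^H=g$ on $\Omega_H$ for every $H$ guaranteeing the limit lies in $\mathbb{V}^g$.
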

\begin{proof}
The proof proceeds using the same arguments as in \cite[Theorem 4.3]{BoLiNo19analysis}. In fact, from \Cref{cor:consistency_truncation} we deduce that $\{ I_s[u^H] \}$ is uniformly bounded and therefore $\{u^H\}$ is bounded in $W^{2s}_1(\Omega)$. It follows that, up to a subsequence, $u^H$ converges in $L^1(\Omega)$ to a limit $\widetilde u$. Also, because $u^H = g$ in $\Omega_H$, we can extend $\widetilde u$ by $g$ on $\Omega^c$, and have $u^H \to u$ a.e in $\mRd$. We then can invoke Fatou's lemma and \Cref{cor:consistency_truncation} to deduce that
\[
I_s [\widetilde u ] \le \liminf_{H \to \infty} I_s [ u^H ] \lesssim \liminf_{H \to \infty} I_s [u] + H^{-1-2s} = I_s [u].
\]
Because $\widetilde u \in \mathbb{V}^g$, we deduce that $\widetilde u = u$ whence $u_H \to u$ in $L^1(\Omega)$ as $H\to0$. By interpolation, we conclude that convergence in $W^{2r}_1(\Omega)$ holds for all $r \in [0,s)$.
\end{proof}

\section{Prescribed nonlocal mean curvature} \label{sec:prescribed_curvature}

In this section, we briefly introduce the problem of computing graphs with prescribed nonlocal mean curvature. More specifically, we address  the computation of a function $u$ such that for a.e. $x \in \Omega$, a certain nonlocal mean curvature at $\big(x,u(x)\big)$ is equal to a given function $f(x)$. For a set $E \subset \mRdp$ and $\widetilde{x} \in \partial E$, such nonlocal mean curvature operator is defined as \cite{CaRoSa10}
\begin{equation*}\label{E:NMS-def-NonLocalCurv}
H_s[E](\widetilde{x}) := \PV \int_{\mRdp} \frac{\chi_{E^c}(\widetilde{y}) - \chi_{E}(\widetilde{y})}{|\widetilde{x}-\widetilde{y}|^{d+1+2s}} d\widetilde{y}.
\end{equation*}
In turn,  for $\widetilde{x} = (x,u(x))$ on the graph of $u$, this 
can be written as \cite[Chapter 4]{Lombardini-thesis}
\[ \begin{aligned}
H_s[u](x) = \PV \int_{\mR^d} G_s\l(\frac{u(x)-u(y)}{|x-y|}\r) \frac{dy}{|x-y|^{d+2s}}.
\end{aligned} \] 

To recover the classical mean curvature in the limit $s \to \frac12^-$, it is necessary to normalize the operator $H_s$ accordingly. Let $\alpha_{d}$ denote the volume of the $d$-dimensional unit ball, and consider the prescribed nonlocal mean curvature problem
\begin{equation}\label{E:NMS-Prescribe-Nmc}
\l\{\begin{array}{rl}
\frac{1-2s}{d \alpha_d}H_s[u](x) = f(x), & x \in \Omega, \\
u(x) = g(x),  &  x \in \mRd \setminus \Omega.
\end{array}\r.
\end{equation}
The scaling factor $\frac{1-2s}{d \alpha_d}$ yields \cite[Lemma 5.8]{BoLiNo19analysis}
\begin{equation}\label{E:NMS-NonLocalCurv-asymp}
\lim_{s \to \frac12^-}\frac{1-2s}{d \alpha_d}H_s[E](x) = H[E](x),
\end{equation}
where $H[E]$ denotes the classical mean curvature operator.
Therefore, in the limit $s \to \frac12^-$, formula \eqref{E:NMS-Prescribe-Nmc} formally becomes the following Dirichlet problem for graphs of prescribed classical mean curvature:
\begin{equation}\label{E:NMS-Prescribe-MC}
\l\{\begin{array}{rl}
\frac{1}{d} \; \div\Big( \frac{\nabla u (x)}{\big(1+|\nabla u (x)|^2 \big)^{1/2}} \Big) = f(x), & x \in \Omega, \\
u(x) = g(x), &  x \in \pO.
\end{array}\r.
\end{equation}

An alternative formulation of the prescribed nonlocal mean curvature problem for graphs is to find $u \in \mathbb{V}^g$ minimizing the functional
\begin{equation}\label{E:NMS-Energy-prescribed-nmc}
\mathcal{K}_{s}[u;f] := I_s[u] - \frac{d \alpha_d}{1-2s}\int_{\Omega} f(x) u(x) dx.
\end{equation}
Because  $I_s[u]$ is convex and the second term in the right hand side above is linear, it follows that this functional is also convex. Then, by taking the first variation of \eqref{E:NMS-Energy-prescribed-nmc}, we see that $u \in \mathbb{V}^g$ is the minimizer of $\mathcal{K}_{s}[\cdot;f]$ if and only if it satisfies
\begin{equation}\label{E:NMS-variation-prescribed-nmc}
\begin{aligned}
0 &= a_u(u,v) - \frac{d \alpha_d}{1-2s}\int_{\Omega} f(x) v(x) dx \\
&= \iint_{Q_{\Omega}} G_s\l(\frac{u(x)-u(y)}{|x-y|}\r) \frac{v(x)-v(y)}{|x-y|^{d+2s}}dx dy
- \frac{d \alpha_d}{1-2s}\int_{\Omega} f(x) v(x) dx
\end{aligned}
\end{equation}
for every $v \in \mathbb{V}^0$. Formally, \eqref{E:NMS-Prescribe-Nmc} coincides with \eqref{E:NMS-variation-prescribed-nmc} because one can multiply \eqref{E:NMS-Prescribe-Nmc} by a test function $v$, integrate by parts and take advantage of the fact that $G_s$ is an odd function to arrive at \eqref{E:NMS-variation-prescribed-nmc} up to a constant factor.

One intriguing question regarding the energy $\mathcal{K}_{s}[u;f]$ in \eqref{E:NMS-Energy-prescribed-nmc} is what conditions on $f$ are needed to guarantee that it is bounded below. In fact, for the variational formulation of the classical mean curvature problem \eqref{E:NMS-Prescribe-MC}, Giaquinta \cite{Giaq1974dirichlet-pmc} proves the following  necessary and sufficient condition for well posedness: there exists some $\ve_0 > 0$ such that for every measurable set $A \subset \Omega$,
\begin{equation}\label{E:prescribed-mc-assumption}
\Big| \int_A f(x)dx \Big| \le \frac{(1 - \ve_0)}{d} \; \mathcal{H}^{d-1}(\partial A),
\end{equation}
where $\mathcal{H}^{d-1}$ denotes the $(d-1)-$dimensional Hausdorff measure. In some sense, this condition ensures that the function $f$ be suitably small. 

Although we are not aware of such a characterization for prescribed nonlocal mean curvature problems, a related sufficient condition for $\mathcal{K}_{s}[u;f]$ to have a lower bound can be easily derived.
In fact, exploiting \cite[Lemma 2.5 and Proposition 2.7]{BoLiNo19analysis} and the Sobolev embedding $W^{2s}_1(\Omega) \subset L^{d/(d-2s)}(\Omega)$ we deduce that
\[
I_s[u] + C_1(d,\Omega,s, \|g\|_{L^\infty(\Omega^c)}) \ge |u|_{W^{2s}_1(\Omega)} \gtrsim \Vert u \Vert_{L^{d/(d-2s)}(\Omega)}.
\]
On the other hand, H\"older's inequality gives
\[
\int_{\Omega} f(x)u(x) dx \le \Vert u \Vert_{L^{d/(d-2s)}(\Omega)} \|f\|_{L^{d/(2s)}(\Omega)},
\]
whence $\mathcal{K}_{s}[u;f]$ is bounded from below provided $\Vert f \Vert_{L^{d/(2s)}(\Omega)}$ is suitably small,
\[
\mathcal{K}_{s}[u;f] \ge \Vert u \Vert_{L^{d/(d-2s)}(\Omega)} \l(C -   \|f\|_{L^{d/(2s)}(\Omega)}\r)  -  C_1(d,\Omega,s, \|g\|_{L^\infty(\Omega^c)}).
\]
This is to some extent consistent with \eqref{E:prescribed-mc-assumption}, because it holds that
\[
\Big| \int_A f(x)dx \Big| \le \l( \int_A 1 dx \r)^{\frac{d-1}{d}} \l( \int_A |f(x)|^d dx \r)^{\frac{1}{d}} \lesssim \mathcal{H}^{d-1}(\partial A) \Vert f \Vert_{L^d(\Omega)},
\]
due to H\"older's inequality and the isoperimetric inequality, and formally the case $2s = 1$ corresponds to the classical prescribed mean curvature problem (cf. \eqref{E:NMS-NonLocalCurv-asymp}).

\section{Numerical experiments} \label{sec:experiments}
This section presents a variety of numerical experiments that illustrate some of the main features of fractional minimal graphs discussed in this paper. From a quantitative perspective, we explore stickiness and the effect of truncating the computational domain. Moreover, we report on the conditioning of the matrices arising in the iterative resolution of the nonlinear discrete equations. Our experiments also illustrate that nonlocal minimal graphs may change their concavity inside the domain $\Omega$, and we show that graphs with prescribed fractional mean curvature may be discontinuous in $\Omega$.

In all the experiments displayed in this section we use the damped Newton algorithm from \S\ref{sec:Newton}. We refer to \cite{BoLiNo19analysis} for experiments involving the semi-implicit gradient flow algorithm and illustrating its energy-decrease property.

\subsection{Quantitative boundary behavior} \label{sec:stickiness_thm2}

We first consider the example studied in \cite[Theorem 1.2]{DipiSavinVald17}. We solve \eqref{E:WeakForm-discrete} for $\Omega = (-1,1) \subset \mR$ and $g(x) = M\textrm{sign}(x)$, where $M > 0$. Reference \cite{DipiSavinVald17} proves that, for every $s \in (0,1/2)$, stickiness (i.e. the solution being discontinuous at $\partial \Omega$) occurs if $M$ is big enough and, denoting the corresponding solution by $u^M$, that there exists an optimal constant $c_0$ such that
\begin{equation} \label{eq:bdry-behavior-1d}
\sup_{x \in \Omega} u^M(x) < c_0 M^{\frac{1+2s}{2+2s}}, \quad
\inf_{x \in \Omega} u^M(x) > -c_0 M^{\frac{1+2s}{2+2s}}.
\end{equation}
In our experiments, we consider $s=0.1,0.25,0.4$ and use graded meshes (cf. \Cref{sec:graded}) with parameter $\mu = 2, h = 10^{-3}$ to better resolve the boundary discontinuity. The mesh size $h$ here is taken in such a way that the resulting mesh partitions $\Omega = (-1,1)$ into $\lfloor \frac{|\Omega|^{1/\mu}}{h}\rfloor$ subintervals and the smallest ones have size $h^\mu$. Moreover, since this is an example in one dimension and the unboundedly supported data $g$ is piecewise constant, we can use quadrature to approximate the integrals over $\Omega^c$ rather than directly truncating $g$. The left panel in \Cref{fig:stickiness_thm2} shows the computed solutions with $M = 16$.

In all cases we observe that the discrete solutions $u_h$ are monotonically increasing in $\Omega$, so we let $x_1$ be the free node closest to $1$ and use $u^M_h(x_1)$ as an approximation of $\sup_{x \in \Omega} u^M(x)$. The right panel in \Cref{fig:stickiness_thm2} shows how $u^M_h(x_1)$ varies with respect to $M$ for different values of $s$. 

\begin{figure}[!htb]
	\begin{center}
		\includegraphics[width=0.45\linewidth]{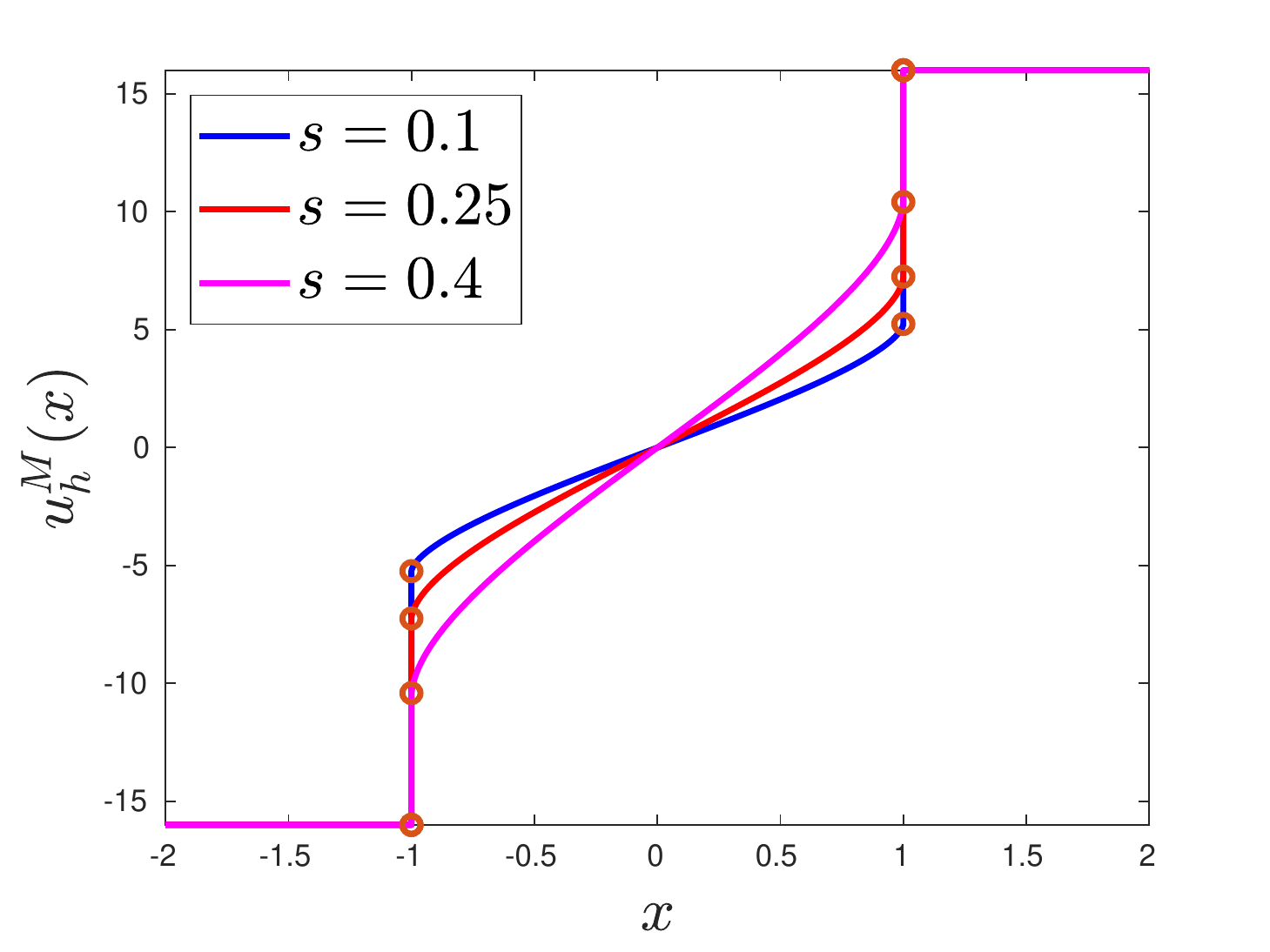} \hspace{-0.7cm}
		\includegraphics[width=0.45\linewidth]{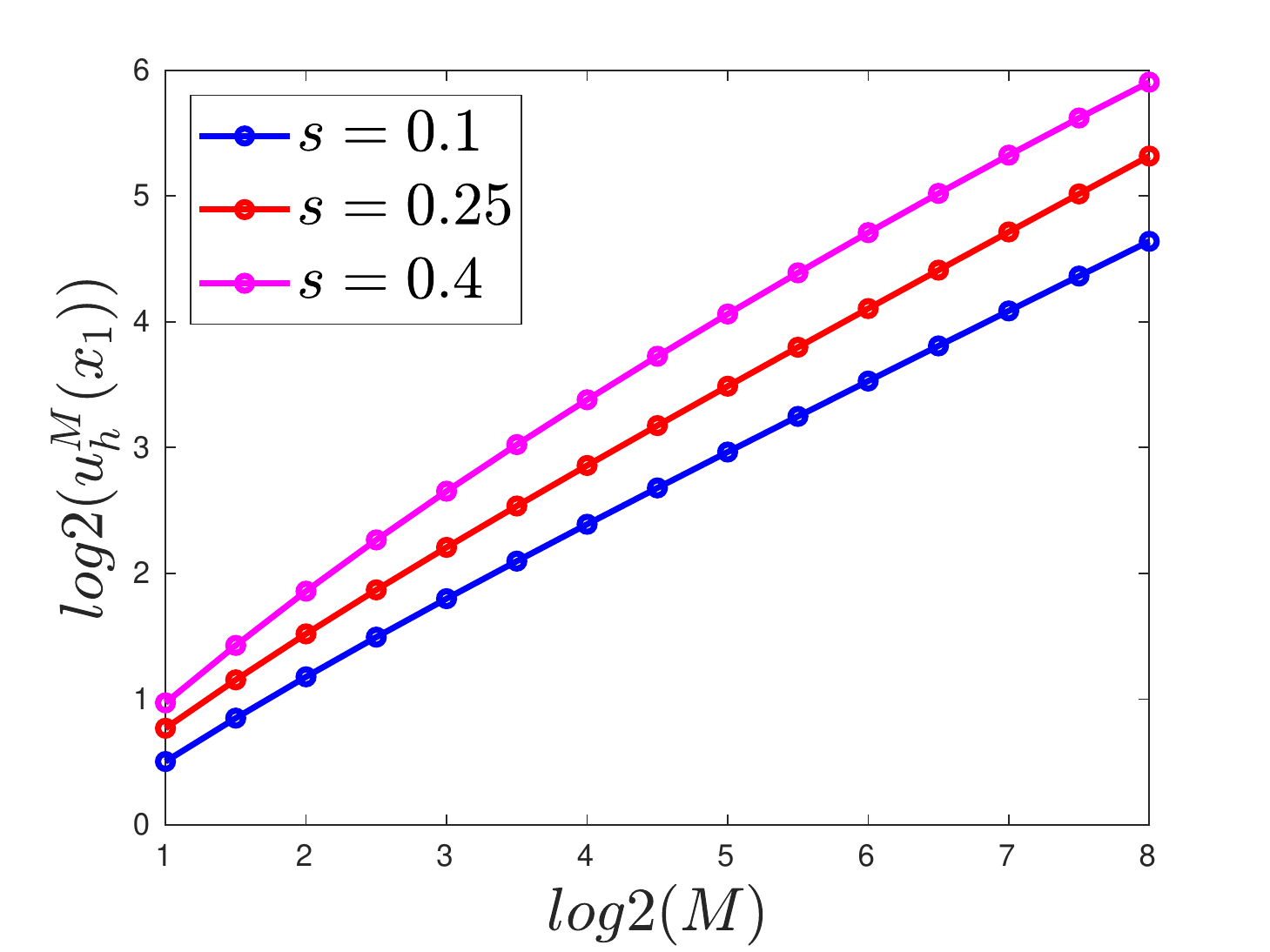} 
		\caption{
Stickiness in 1d. In the setting of \Cref{sec:stickiness_thm2}, the left panel displays the finite element solutions $u^M_h$ for $M = 16$ computed over graded meshes with parameters $\mu = 2, h = 10^{-3}$ and $s\in \{0,1, 0.25, 0.4\}$. The right panel shows the value of $u^M_h(x_1)$ as a function of $M$ for $s \in \{0,1, 0.25, 0.4\}$, which is expected to behave according to \eqref{eq:bdry-behavior-1d}.
}\label{fig:stickiness_thm2}
	\end{center}
\end{figure}

For $s=0.1$ and $s=0.25$ the slopes of the curves are slightly larger than the theoretical rate $M^{\frac{1+2s}{2+2s}}$ whenever $M$ is small. However, as $M$ increases, we see a good agreement with theory. Comparing results for $M=2^{7.5}$ and $M=2^8$, we observe approximate rates $0.553$ for $s=0.1$ and $0.602$ for $s=0.25$, where the expected rates are $6/11 \approx 0.545$ and $3/5 = 0.600$, respectively. However, the situation is different for $s=0.4$: the plotted curve does not correspond to a flat line, and the last two nodes plotted, with $M=2^{7.5}$ and $M=2^8$, show a relative slope of about $0.57$, which is off the expected $9/14 \approx 0.643$.

We believe this issue is due to the mesh size $h$ not being small enough to resolve the boundary behavior. We run the same experiment on a finer mesh, namely with $h = 10^{-4}, \mu = 2$, and report our findings for $s=0.4$ and compare them with the ones for the coarser mesh on \Cref{tab:stickiness_s04}. The results are closer to the predicted rate.

\begin{table}
	\begin{center}
	\begin{tabular}{|c|c|c|c|c|}
	\hline
 &	\multicolumn{2}{|c|}{Example with $h = 10^{-3}$} &	\multicolumn{2}{|c|}{Example with $h = 10^{-4}$} \\
	\hline
	$\log_2(M)$ & $u_h^M(x_1)$ & Slope  & $u_h^M(x_1)$ & Slope \\
	\hline
	$6.0$ & $26.1545$ & \texttt{N/A}   & $26.7488$ & \texttt{N/A}   \\ 
	\hline
	$6.5$ & $32.4687$ & $0.624$ & $33.4057$ & $0.641$\\ 
	\hline
	$7.0$ & $40.0845$ & $0.608$ & $41.5497$ & $0.629$ \\ 
	\hline
	$7.5$ & $49.1873$  & $0.590$ & $51.4627$ & $0.617$  \\
	\hline
	$8.0$ & $59.9410$ & $0.571$  & $63.4528$ & $0.604$ \\
	\hline	
	\end{tabular}
        \bigskip
	\caption{Comparison between computational results for the problem described in \Cref{sec:stickiness_thm2} over two different meshes for $s=0.4$. 
Let $M_i$ be the value of $M$ in the $i$-th row. In this table, by the slope at $M_i$ we refer to $\frac{\log(u_h^{M_i}(x_1)) -\log(u_h^{M_{i-1}}(x_1))}{\log(M_i) - \log(M_{i-1})}$ that, according to \eqref{eq:bdry-behavior-1d}, is expected to be equal to $9/14 \approx 0.643$.
} \label{tab:stickiness_s04}
\end{center}
\end{table}

\subsection{Conditioning} \label{sec:conditioning}

For the solutions of the linear systems arising in our discrete formulations, we use a conjugate gradient method. Therefore, the number of iterations needed for a fixed tolerance scales like $\sqrt{\kappa(K)}$, where $\kappa(K)$ is the condition number of the stiffness matrix $K$. For linear problems of order $s$ involving the fractional Laplacian $(-\Delta)^s$, the condition number of $K$ satisfies
\cite{AiMcTr99}
\[
\kappa(K) = \mathcal{O}\left( N^{2s/d} \left( \frac{h_{max}}{h_{min}} \right)^{d-2s} \right). 
\]
Reference \cite{AiMcTr99} also shows that diagonal preconditioning yields $\kappa(K) = \mathcal{O}\left( N^{2s/d} \right)$, where $N$ is the dimension of the finite element space.

Using the Matlab function \verb+condest+, we estimate the condition number of the Jacobian matrix in the last Newton iteration in the example from \Cref{sec:stickiness_thm2} with $M=1$, with and without diagonal preconditioning. \Cref{fig:condition_number_thm2_s01_s04} summarizes our findings.

\begin{figure}[!htb]
	\begin{center} 
		\includegraphics[width=0.45\linewidth]{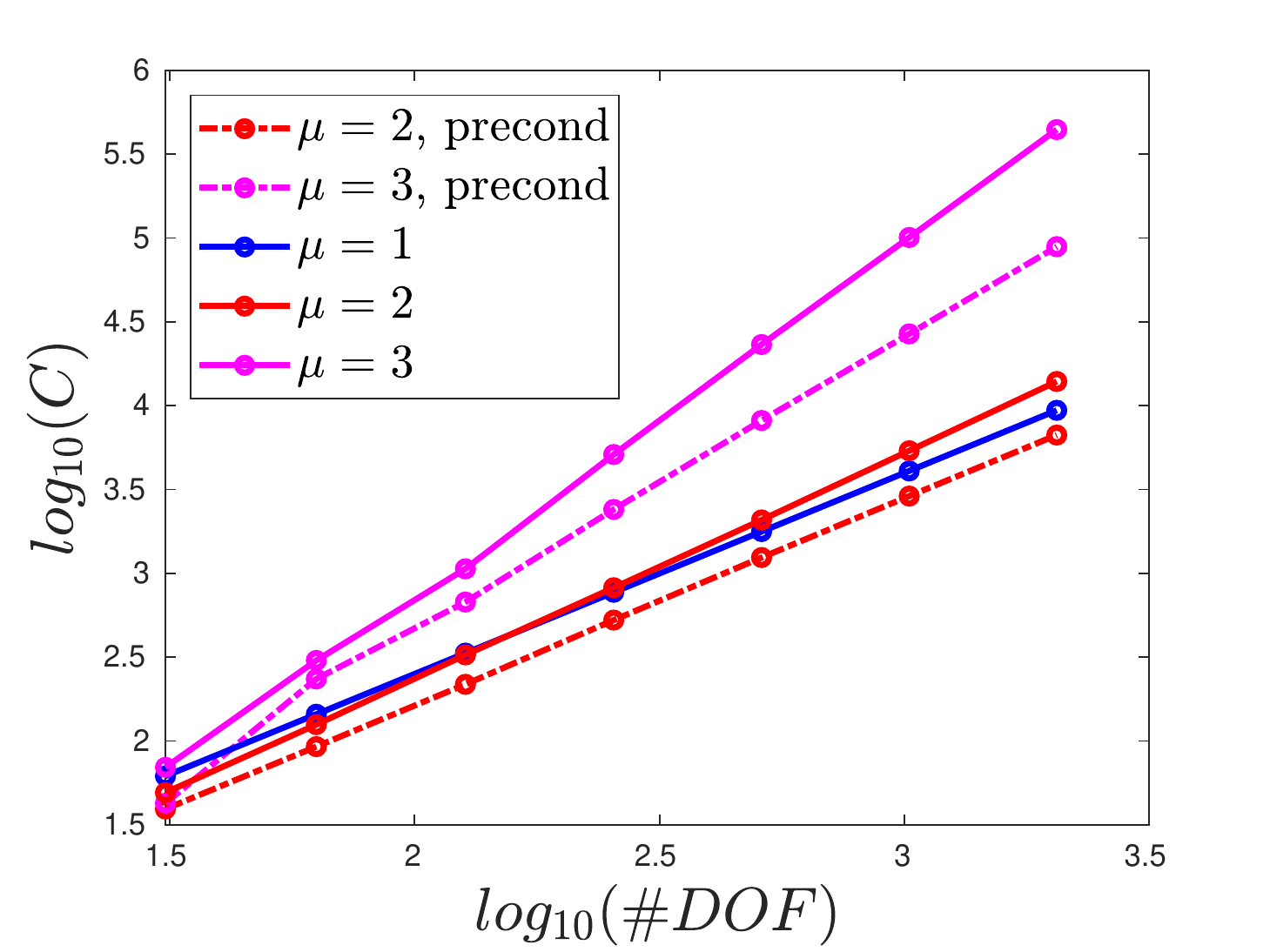} \hspace{-0.7cm}
		\includegraphics[width=0.45\linewidth]{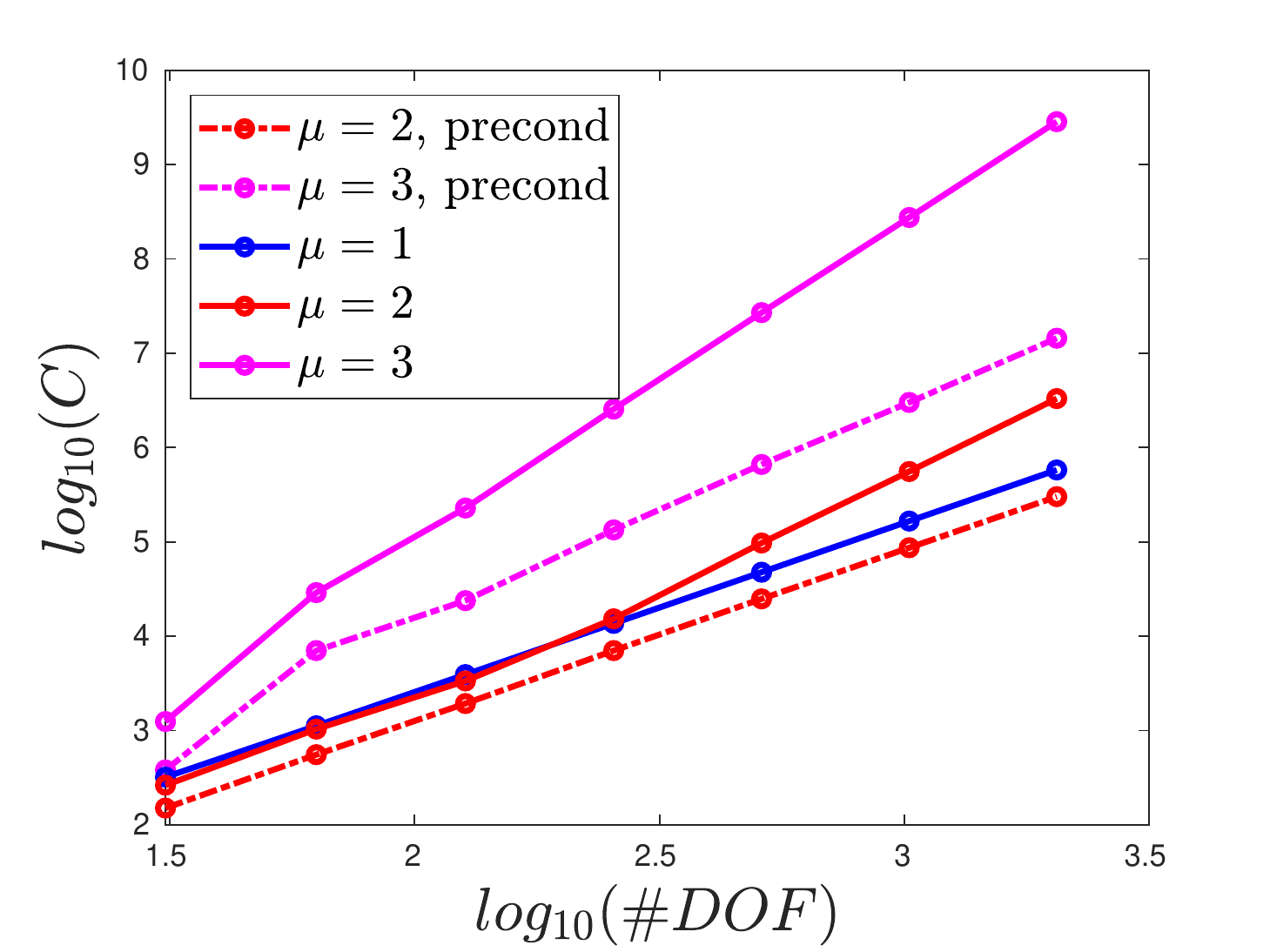}		
	\end{center}
	\caption{Condition numbers of the Jacobian matrix $\widetilde{K}$ at the last step of \Cref{alg:Damped-Newton} for the problem described in \Cref{sec:stickiness_thm2} with $s = 0.1$ (left), $s = 0.4$ (right) and meshes with grading parameters $\mu \in \{ 1,2, 3\}$. The condition number on quasi-uniform meshes ($\mu = 1$) scales as $N^{2s+1}$, in agreement with the $s$-fractional mean curvature operator being an operator of order $s+1/2$ (cf. \eqref{E:WeakForm-NMS-Graph}). While the conditioning for graded meshes is significantly poorer, when $\mu = 2$ diagonal preconditioning recovers condition numbers comparable to the ones on quasi-uniform meshes.
}
\label{fig:condition_number_thm2_s01_s04}
\end{figure}

Let $N = \mbox{dim} \mathbb{V}_h^g$ be the number of degrees of freedom. For a fixed $s$ and using uniform meshes, we observe that the condition number behaves like $N^{2s+1} \simeq h^{-2s-1}$: this is consistent with the $s$-fractional mean curvature operator being an operator of order $s+1/2$. For graded meshes (with $\mu = 2, \mu=3$), the behavior is less clear. When using diagonal preconditioning for $\mu = 2$, we observe that the condition number also behaves like $N^{2s+1}$.

\subsection{Truncation of unboundedly supported data}
In \Cref{sec:support_g}, we studied the effect of truncating unboundedly supported data and proved the convergence of the discrete solutions of the truncated problems $u^H_h$ towards $u$ as $h \to 0$, $H\to \infty$. 

Here, we study numerically the effect of data truncation by running experiments on a simple two-dimensional problem. Consider $\Omega = B_1 \subset \mR^2$ and $g \equiv 1$; then, the nonlocal minimal graph $u$ is a constant function. For $H > 0$, we set $\Omega_H = B_{H+1}$. and compute nonlocal minimal graphs on $\Omega$ with Dirichlet data $g^H = \chi_{\Omega_H}$, which is a truncation of $g \equiv 1$. Clearly, if there was no truncation, then $u_h$ should be constantly $1$; the effect of the truncation of $g$ is that the minimum value of $u^H_h$ inside $\Omega$ is strictly less than $1$. For $s = 0.25$, we plot the $L^1(\Omega)$ and $L^{\infty}(\Omega)$ norms of $u_h - u^H_h$ as a function of $H$ in \Cref{fig:truncation_2d_s025}. The slope of the curve is close to $-1.5$ for large $H$, which is in agreement with the $\mathcal{O}(H^{-1-2s})$ consistency error for the energy $I_s$ we proved in \Cref{cor:consistency_truncation}.

\begin{figure}[!htb]
	\begin{center}
		\includegraphics[width=0.5\linewidth]{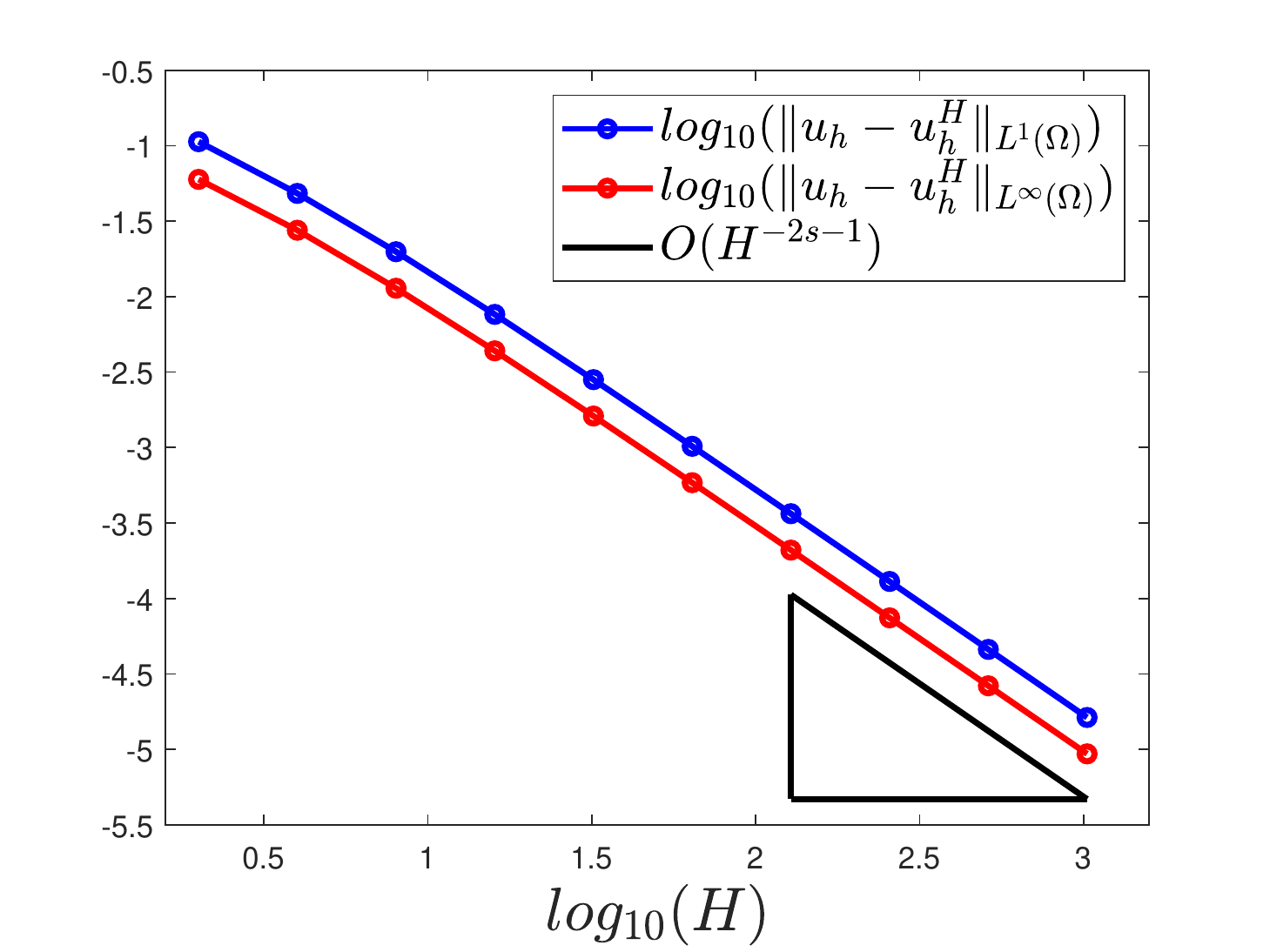}
	\end{center}
	\caption{Effects of truncation in $2d$ for $s=0.25$: for $g^H = \chi_{\Omega_H}$, we compute the $L^1$ and $L^\infty$ discrepancies between $u_h\equiv 1$ and $u^H_h$ as a function of $H$. For both norms we observe a discrepancy of order $H^{-1-2s}$, in agreement with \Cref{cor:consistency_truncation}.
}
	\label{fig:truncation_2d_s025}
\end{figure}

\subsection{Change of convexity} \label{sec:convexity}
This is a peculiar behavior of fractional minimal graphs. 
We consider $\Omega = (-1,1)$, $s = 0.02$, $g(x) = 1$ for $a \le |x| \le 2$ and $g(x) = 0$ otherwise, and denote by $u_a$ the solution of \eqref{E:WeakForm-discrete}. For $a = 1$, it is apparent from \Cref{fig:change_convexity_1d_details} (left panel) that the solution $u_1$ is convex in $\Omega$ and has stickiness on the boundary. In addition, the figure confirms that $\lim\limits_{x \to 1^{-}} u_a'(x) = \infty$, which is asserted in \cite[Corollary 1.3]{DipiSavinVald19nonlocal}. On the contrary, for $1 < a < 2$, as can be seen from \Cref{fig:change_convexity_1d_details} (right panel), \cite[Corollary 1.3]{DipiSavinVald19nonlocal} implies that $\lim\limits_{x \to 1^{-}} u_a'(x) = -\infty$ since $g(x) = 0$ near the boundary of $\Omega$. 
This fact implies that $u(x)$ cannot be convex near $x = 1$ for $1 < a < 2$. Furthermore, as $a \to 1^+$ one expects that $u_a(x) \to u_1(x)$ and thus that $u_a$ be convex in the interior of $\Omega = (-1,1)$ for $a$ close to $1$. Therefore it is natural that for some values of $a > 1$ sufficiently close to $1$, the solution $u_a$ changes the sign of its second derivative inside $\Omega$. In fact, we see from the right panel in \Cref{fig:change_convexity_1d_details} that the nonlocal minimal graph $u$ in $\Omega$ continuously changes from a convex curve into a concave one as $a$ varies from $1$ to $1.5$.

\begin{figure}[!htb]
	\begin{center}
		\includegraphics[width=0.45\linewidth]{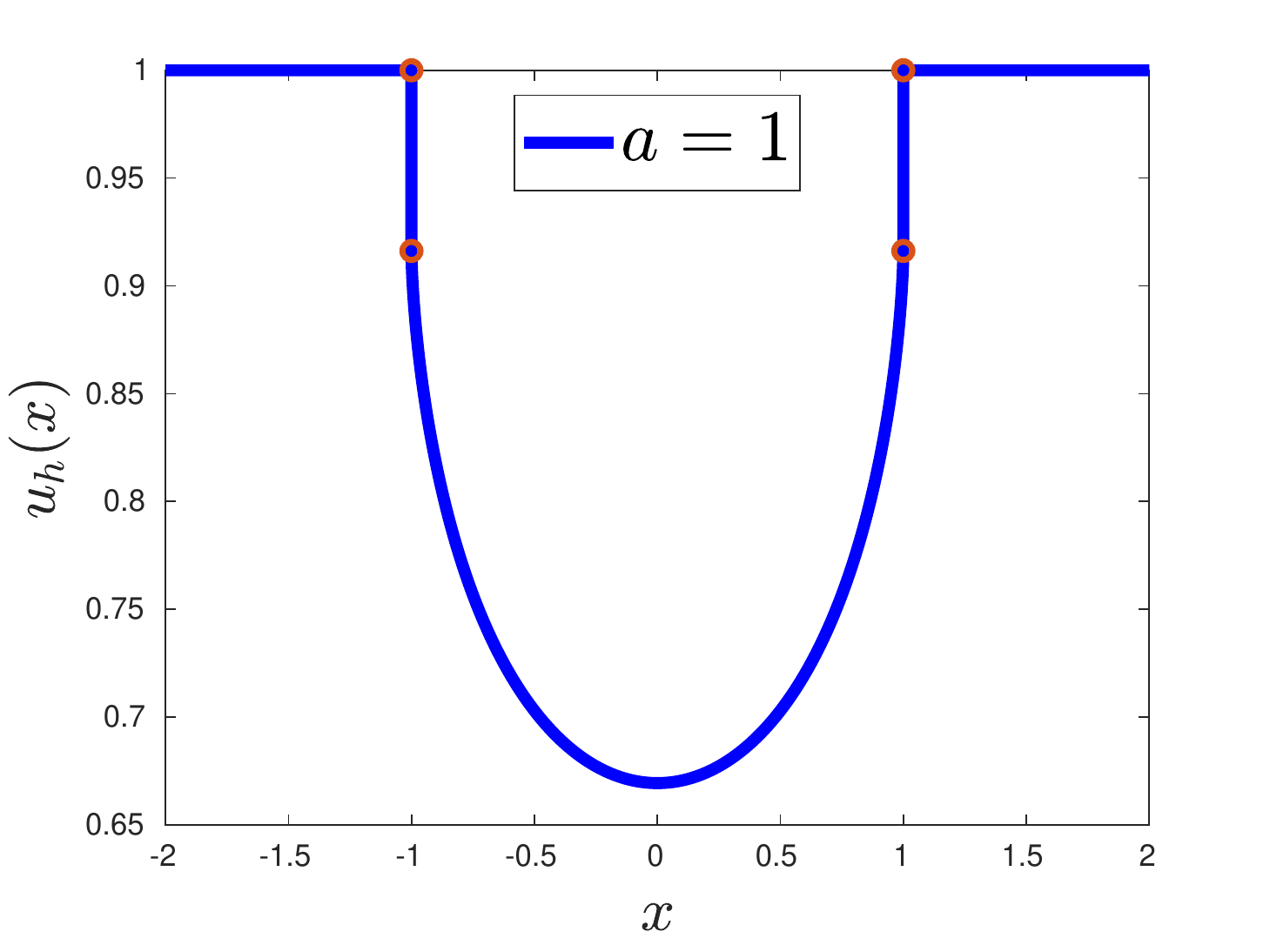}
		\hspace{-0.7cm}
		\includegraphics[width=0.45\linewidth]{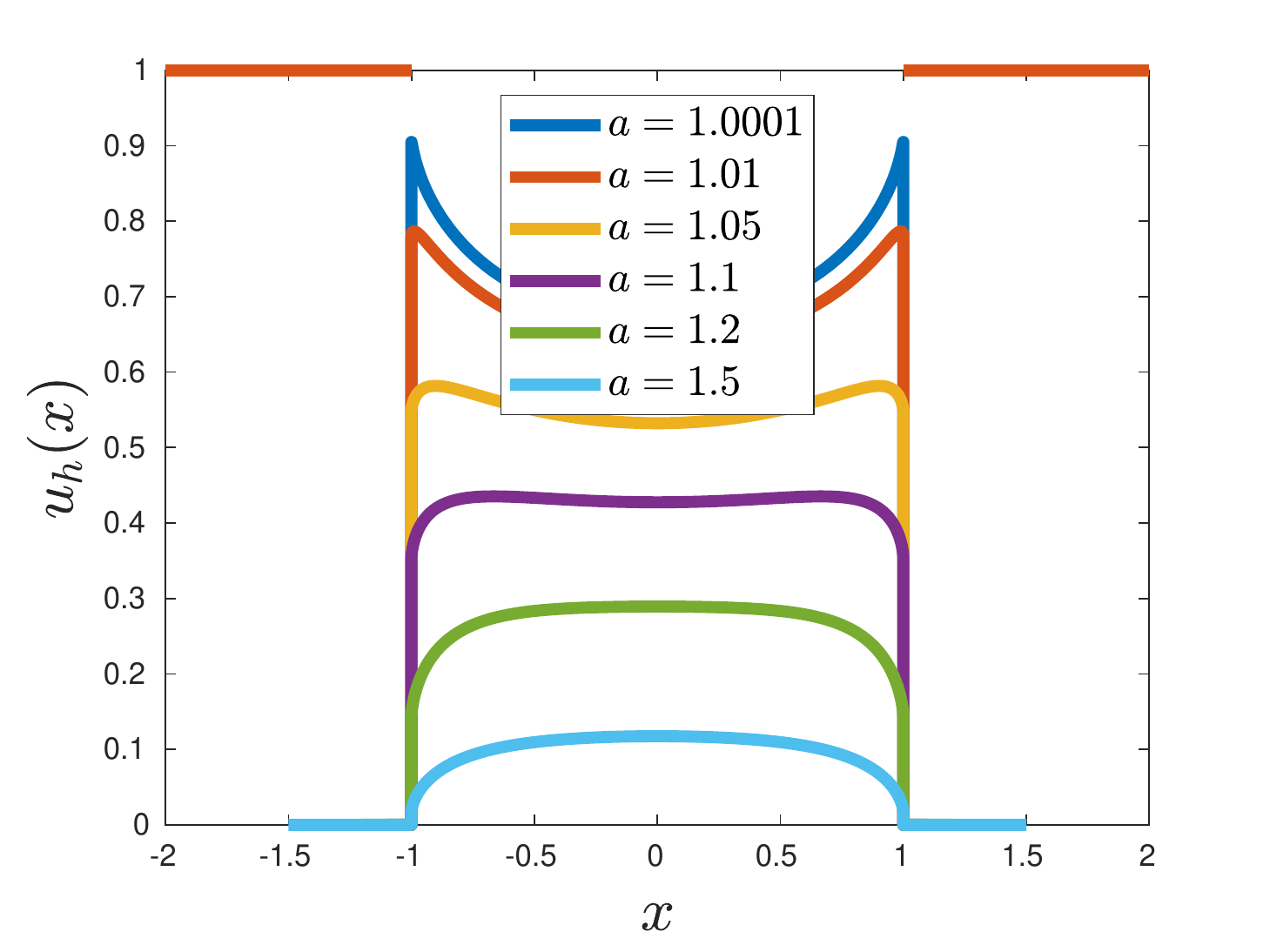}
	\end{center}
	\caption{Change of convexity: one-dimensional experiment for $s = 0.02$ with $a = 1$ (left panel) and $a = 1.0001, 1.01, 1.05,$ $1.1, 1.2, 1.5$ (right panel). The solutions $u_a$ exhibit a transition from being convex in $\Omega$ for $a=1$ to being concave for $a = 1.5$.}
	\label{fig:change_convexity_1d_details}
\end{figure}

This change of convexity is not restricted to one-dimensional problems. Let $\Omega \subset \mR^2$ be the unit ball, $s = 0.25$, and $g(x) = 1$ for $\frac{129}{128} \le |x| \le 1.5$ and $g(x) = 0$ otherwise. \Cref{fig:change_convexity} (right panel) shows a radial slice of the discrete minimal graph, which is a convex function near the origin but concave near $\pp\Omega$. An argument analogous to the one we discussed in the previous paragraph also explains this behavior in a two-dimensional experiment.

\begin{figure}[!htb]
	\begin{center}
		\includegraphics[width=0.45\linewidth]{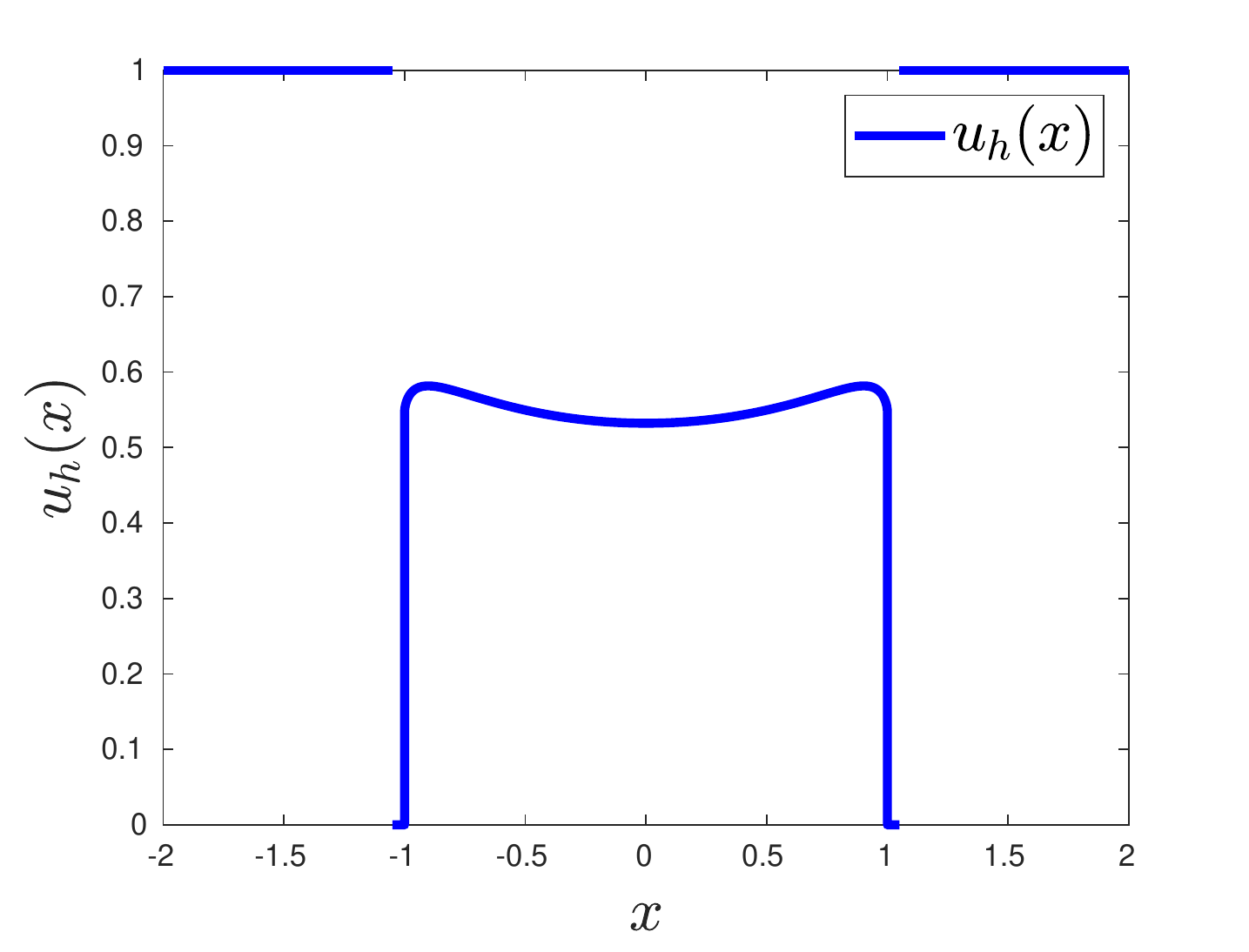}
		\hspace{-0.7cm}
		\includegraphics[width=0.45\linewidth]{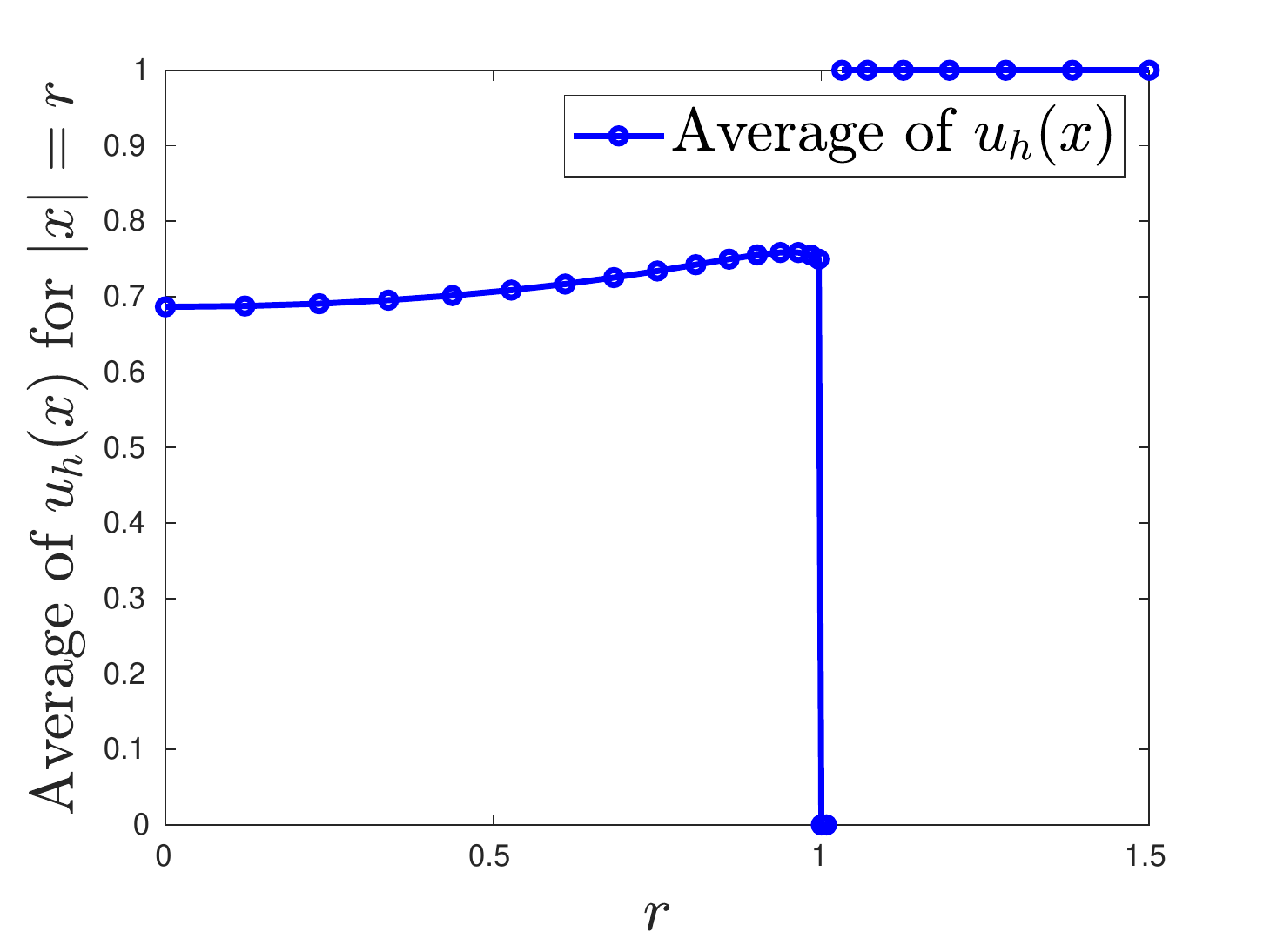}
	\end{center}
	\caption{Change of convexity: one-dimensional experiment with $s=0.02$ (left panel) and two-dimensional experiment with $s=0.25$ (right panel). The piecewise constant boundary data vanish near the boundary of $\Omega$ and at infinity and are equal to $1$ on an intermediate annulus.
	} \label{fig:change_convexity}
\end{figure}

\subsection{Geometric rigidity}
\label{sec:behavior_2d}
Stickiness is one of the intrinsic and distintive features of nonlocal minimal graphs. It can be delicate especially in dimension more than one. We now analyze a problem studied in \cite{DipiSavinVald19boundary} that illustrates the fact that for $\Omega \subset \mR^2$, if nonlocal minimal graphs are continuous at some point $x \in \pp\Omega$ then they must also have continuous tangential derivatives at such a point. This geometric rigidity stands in sharp contrast with the case of either fractional-order linear problems and classical minimal graphs.

Specifically, we consider $\Omega = (0,1) \times (-1,1)$ and the Dirichlet data
\[
g(x, y) = \gamma \l( \chi_{(-1,-a) \times (0,1)}(x,y) - \chi_{(-1,-a) \times (-1,0)}(x,y) \r)
\]
where $a \in [0,1]$ and $\gamma > 0$ are parameters to be chosen. 
We construct graded meshes with $\mu = 2$ and smallest mesh size $h^{\mu} = 2^{-7}$; see Section \ref{sec:graded}. \Cref{fig:2d-stick1-uh} (left panel) displays the numerical solution $u_h$ associated with $s=0.25,$ $\gamma = 2$ and $a = 1/8$.  

If one defines the function $u_0(y) = \lim_{x \to 0^+} u(x, y)$, then according to \cite[Theorem 1.4]{DipiSavinVald19boundary}, one has $u'_0(0) = 0$ for $a > 0$. We run a sequence of experiments to computationally verify this theoretical result. For meshes with $\mu = 2$ and $h^{\mu} = 2^{-7}, 2^{-8}, 2^{-9}$, the slopes of $u_h$ in the $y$-direction at $(x,0)$ for $x = 2^{-6},2^{-7}, 2^{-8},2^{-9}$, are recorded in  \Cref{tab:slope_stickiness} below for $s = 0.1, 0.25, 0.4$. Because computing the slope of $u_h$ at $(x,0)$ would be meaningless when $x$ is smaller than $h^{\mu}$, we write a \texttt{N/A} symbol in those cases. Our experiments show that the slopes decrease as $x$ approaches $0$.

To further illustrate this behavior, in \Cref{fig:2d-stick1-uh} (right panel) we display the computed solutions $u_h(x,y)$ at $x=2^{-3}, 2^{-6}, 2^{-9}$, for $s=0.25$ over a mesh with $h^{\mu} = 2^{-9}$. The flattening of the curves as $x \to 0^+$ is apparent. 

\begin{figure}[!htb]
\includegraphics[width=0.4\linewidth]{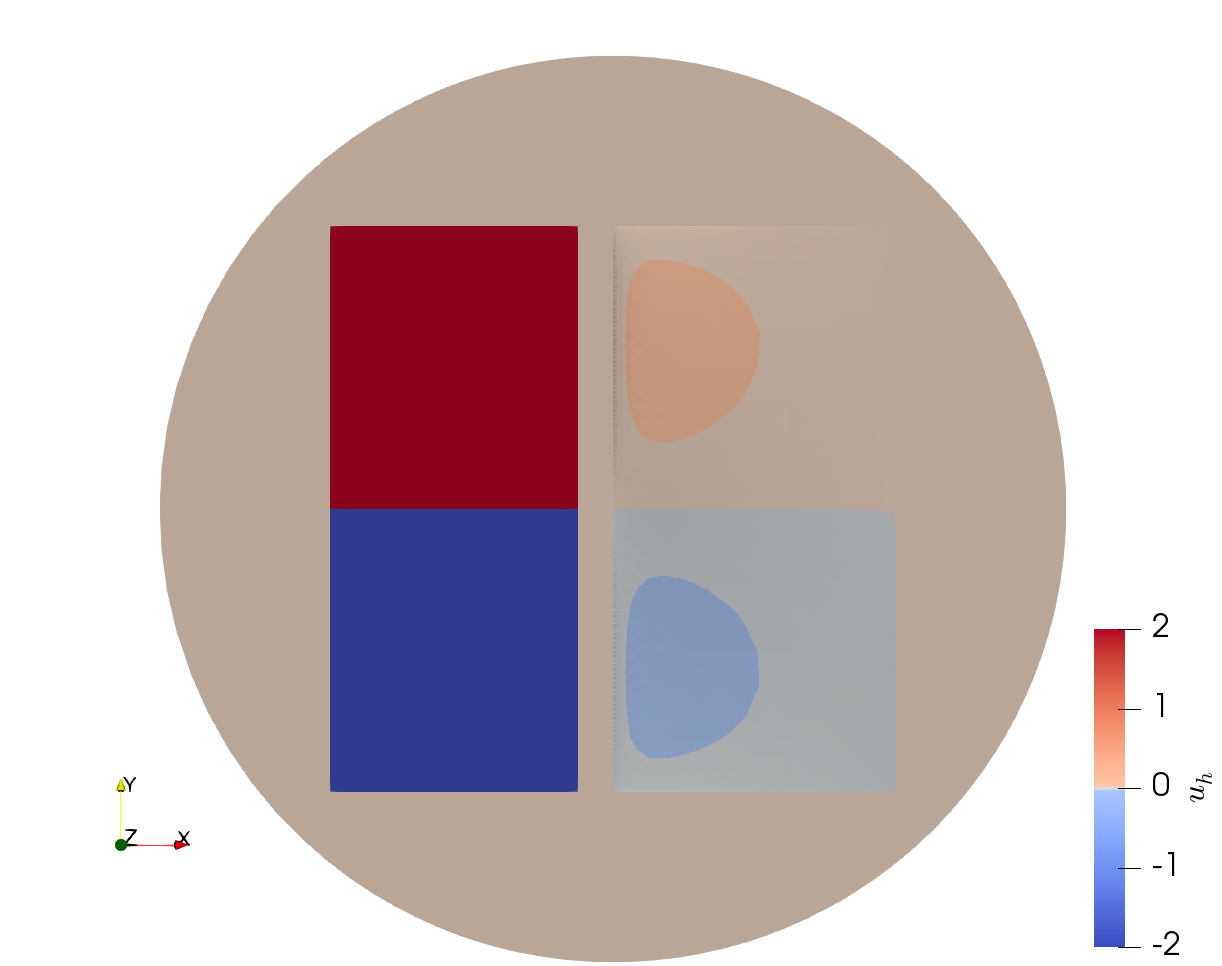}
\includegraphics[width=0.5\linewidth]{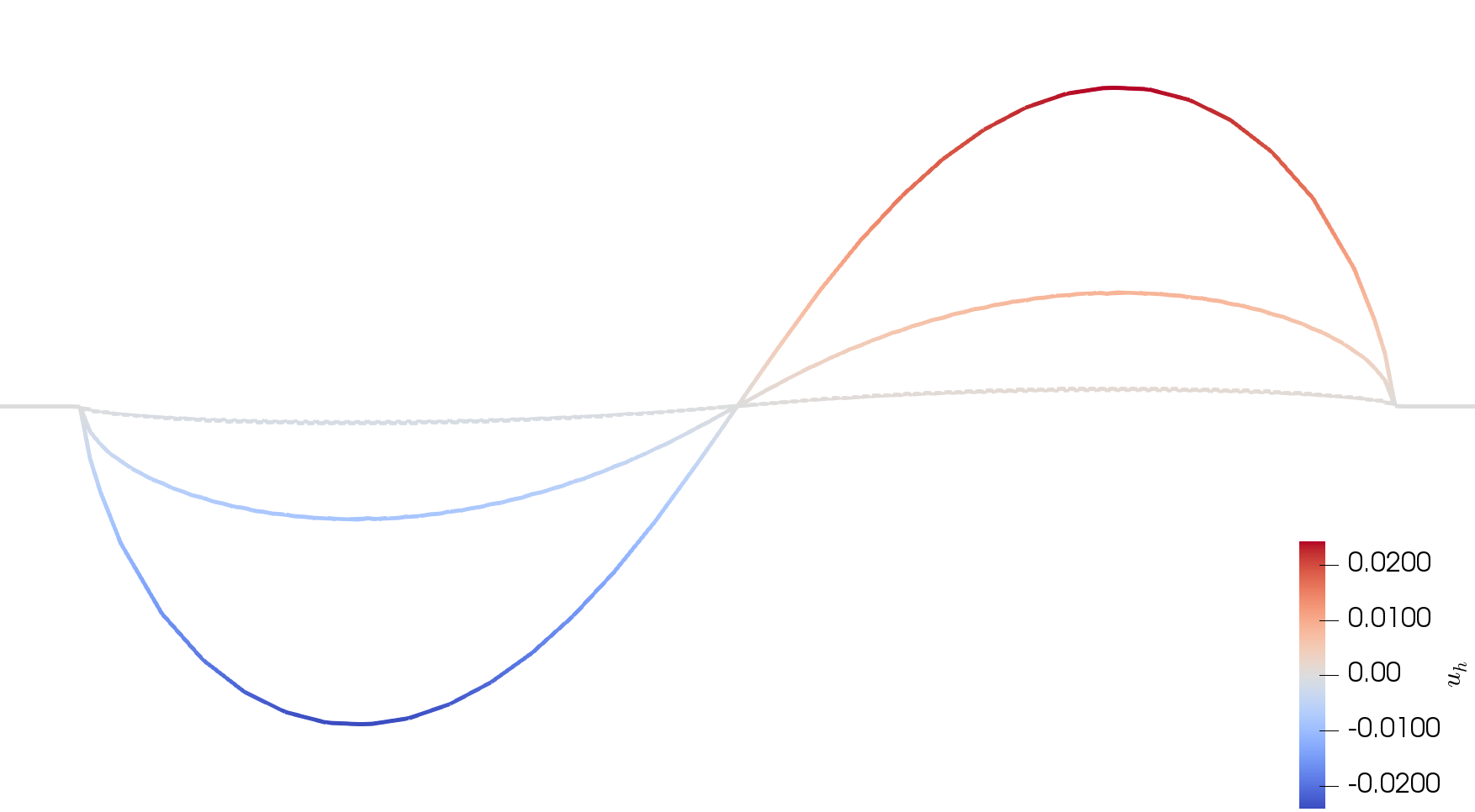}
	\caption{Plot of $u_h$ in \Cref{sec:behavior_2d} for $\gamma = 2, a = 1/8$ and $s=0.25$. Left panel: top view of the solution. Right panel: slices at $x=2^{-3}, 2^{-6}$ and $2^{-9}$. The fractional minimal graph flattens as $x \to 0^+$, in agreement with the fact that for such a minimizer being continuous at some point $x\in \pp\Omega$ implies having continuous tangential derivatives at such a point.
}
	\label{fig:2d-stick1-uh}
\end{figure}

\begin{table}
	\begin{center}
\begin{tabular}{|c|c|c|c|c|} \hline
	\multicolumn{5}{|c|}{$s = 0.10$} \\
	\hline	
	$h^{\mu}$  & $x = 2^{-9}$ & $x = 2^{-8}$ & $x = 2^{-7}$ & $x = 2^{-6}$ \\ \hline	
	$2^{-7}$ & \texttt{N/A}                     &\texttt{N/A}                     & $8.546\times10^{-2}$ & $1.1945\times10^{-1}$  \\	
	$2^{-8}$ & \texttt{N/A}                    & $5.856\times10^{-2}$ & $8.406\times10^{-2}$ & $1.2140\times10^{-1}$ \\	
	$2^{-9}$ & $3.940\times10^{-2}$ & $5.730\times10^{-2}$ & $8.572\times10^{-2}$ & $1.2332\times10^{-1}$  \\	
	\hline  
\multicolumn{5}{c}{ \ }	\\
\hline	
	\multicolumn{5}{|c|}{$s = 0.25$} \\
	\hline		
	$h^{\mu}$  & $x = 2^{-9}$ & $x = 2^{-8}$ & $x = 2^{-7}$ & $x = 2^{-6}$ \\ \hline	
	$2^{-7}$ & \texttt{N/A}                    & \texttt{N/A}                       & $3.466\times10^{-2}$ & $5.473\times10^{-2}$  \\	
	$2^{-8}$ &\texttt{N/A}                     & $2.135\times10^{-2}$ & $3.469\times10^{-2}$ & $5.551\times10^{-2}$ \\	
	$2^{-9}$ & $1.289\times10^{-2}$ & $2.126\times10^{-2}$ & $3.543\times10^{-2}$ & $5.640\times10^{-2}$  \\	
	\hline  	
%
\multicolumn{5}{c}{ \ }	\\

\hline
	\multicolumn{5}{|c|}{$s = 0.40$} \\
	\hline		
	$h^{\mu}$  & $x = 2^{-9}$ & $x = 2^{-8}$ & $x = 2^{-7}$ & $x = 2^{-6}$ \\ \hline
	$2^{-7}$ & \texttt{N/A}                    & \texttt{N/A}                     & $8.605\times10^{-3}$ & $1.509\times10^{-2}$  \\
	$2^{-8}$ & \texttt{N/A}                     & $4.763\times10^{-3}$ & $8.613\times10^{-3}$ & $1.540\times10^{-2}$ \\
	$2^{-9}$ & $2.578\times10^{-3}$ & $4.739\times10^{-3}$ & $8.886\times10^{-3}$ & $1.574\times10^{-2}$  \\
	\hline  
\end{tabular}
\bigskip
\caption{Example of \Cref{sec:behavior_2d}: experimental slopes $\partial_y u_h(x,0)$ for $x=2^{-k}$ and $k=6,\ldots,9$. As $x\to 0^+$, these slopes become smaller; this geometric rigidity is easier to capture for larger $s$.
}
\label{tab:slope_stickiness}
\end{center}
\end{table}

\subsection{Prescribed nonlocal mean curvature}
This section presents experiments involving graphs with nonzero prescribed mean curvature. We run experiments that indicate the need of a compatibility condition such as \eqref{E:prescribed-mc-assumption}, the fact that solutions may develop discontinuities in the interior of the domain, and point to the relation between stickiness and the nonlocal mean curvature of the domain.

\subsubsection{Compatibility}
As discussed in \Cref{sec:prescribed_curvature}, the prescribed nonlocal mean curvature problem \eqref{E:NMS-variation-prescribed-nmc} may not have solutions for some functions $f$. To verify this, in \Cref{fig:compatibility-prescribed-nmc} we consider $\Omega = B(0,1) \subset \mR^2$, $s=0.25$, $g = 0$ and two choices of $f$. For the picture on the right ($f=-10$), the residue does not converge to $0$, and the energy $\mathcal{K}_{s}[u;f]$ goes from $0$ initially down to $-6.6 \times 10^6$ after $16$ Newton iterations.

\begin{figure}[!htb]
	\begin{center}
		\includegraphics[width=0.45\linewidth]{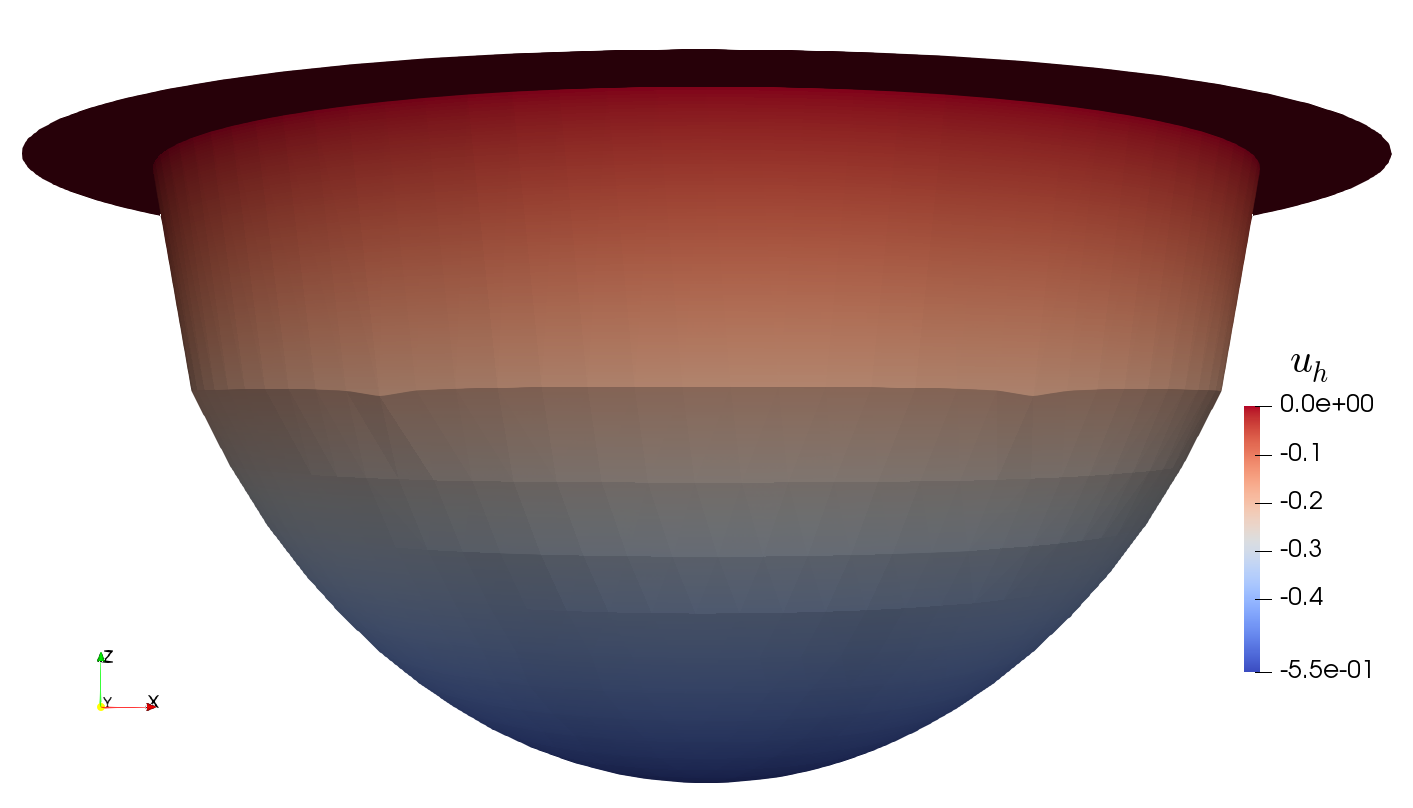}
		\includegraphics[width=0.45\linewidth]{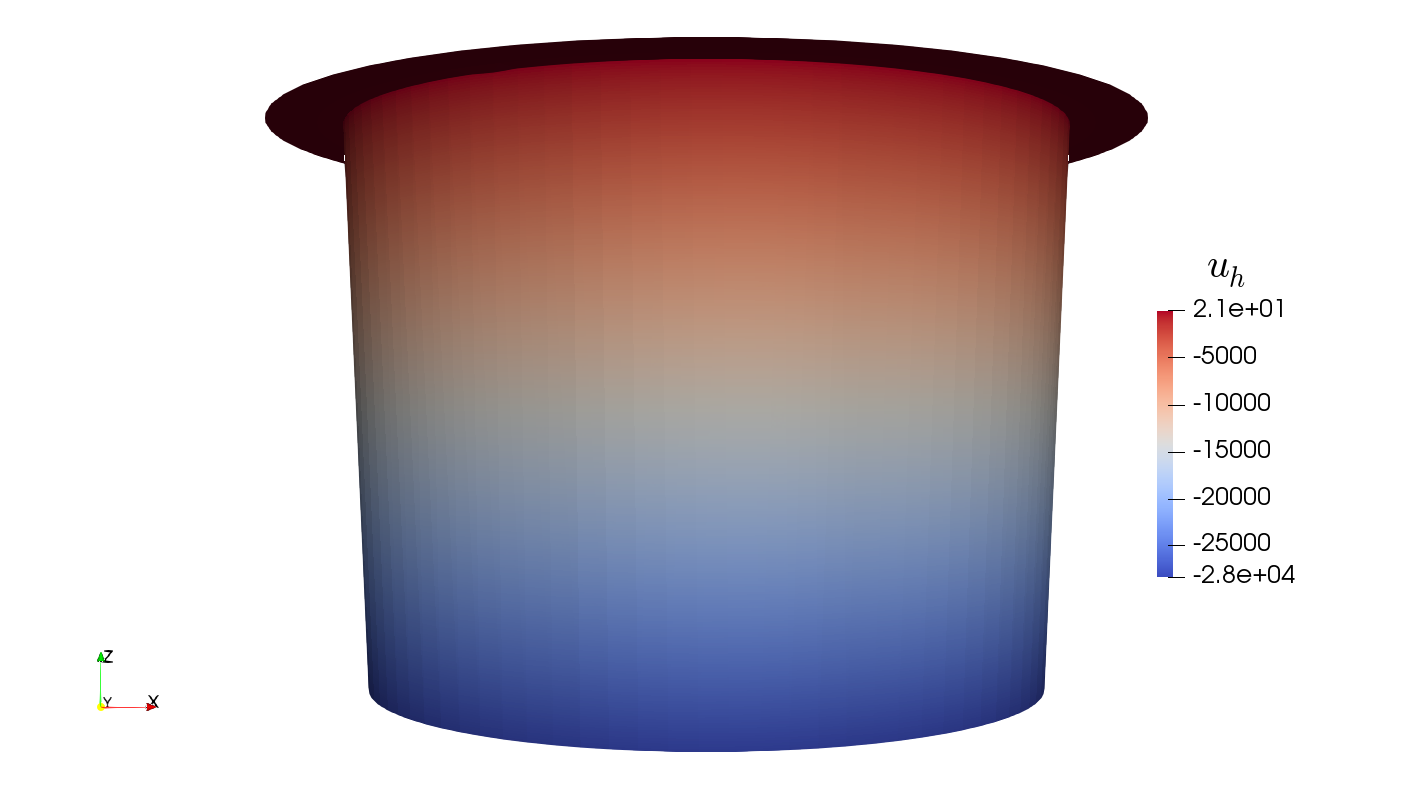}
	\end{center}
	\caption{Compatibility of data: plots of $u_h$ for $s = 0.25, f = -1$ in $\Omega$ (left) and after $16$ Newton iterations for $f = -10$ in $\Omega$ (right). The right hand side $f=-10$ turns out to be incompatible for the prescribed nonlocal mean curvature problem in $\Omega = B(0,1)$.} \label{fig:compatibility-prescribed-nmc}
\end{figure}

\subsubsection{Discontinuities}
Another interesting phenomenon we observe is that, for a discontinuous $f$, the solution $u$ may also develop discontinuities inside $\Omega$. We present the following two examples for $d=1$ and $d=2$.

In first place, let $\Omega = (-1,1) \subset \mR$, $s=0.01$, $g = 0$ and consider $f(x) = 1.5 \, \textrm{sign}(x)$. We use a mesh graded toward $x=0, \, \pm 1$ with $N = 2000$ degrees of freedom and plot the numerical solution $u_h$ in \Cref{fig:1d_discontinuity_inside}. The behavior of $u_h$ indicates that the solution $u$ has discontinuities both at $x = \pm 1$ and $x=0$.
\begin{figure}[!htb]
	\begin{center}
		\includegraphics[width=0.45\linewidth]{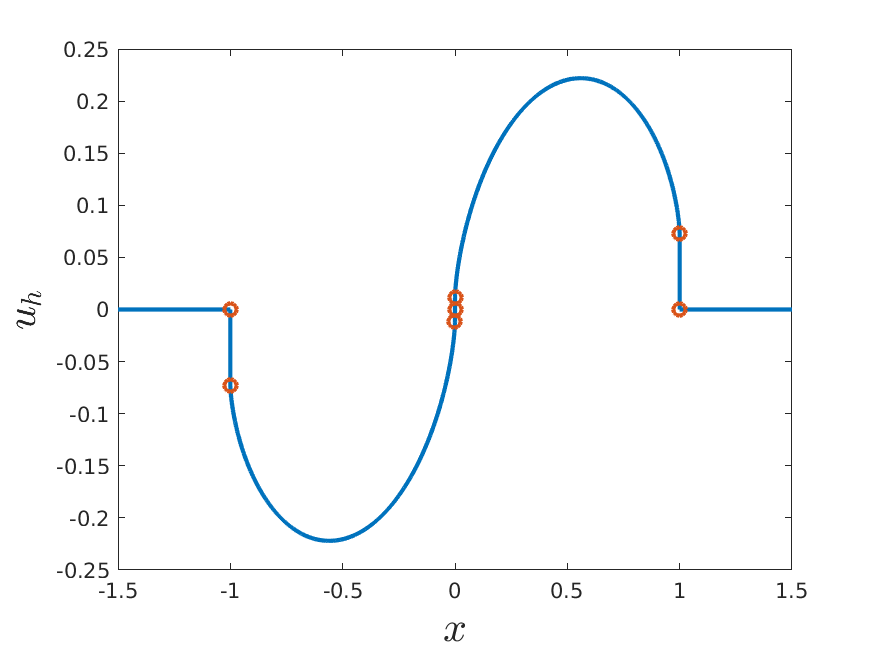}
		\includegraphics[width=0.45\linewidth]{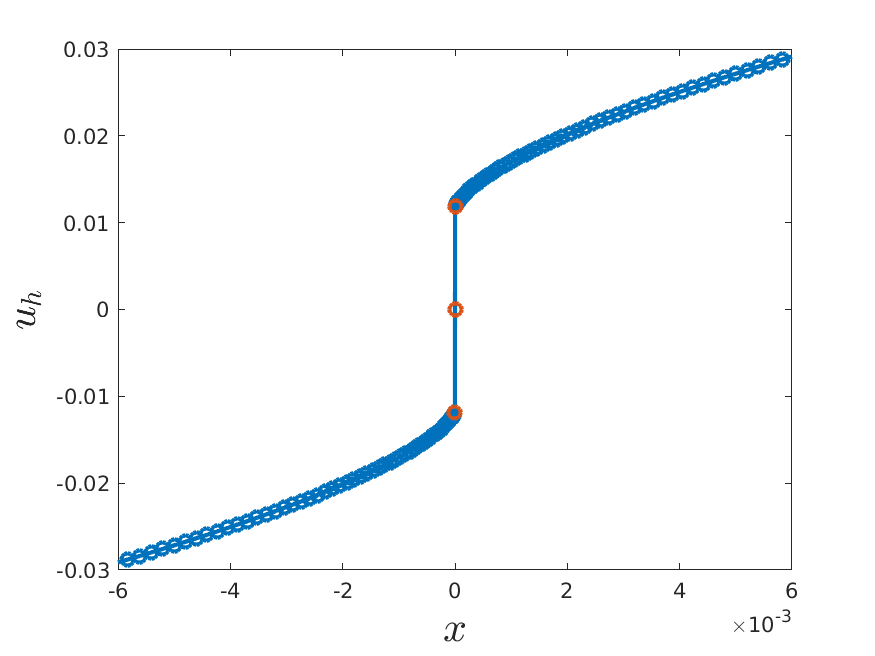}
	\end{center}
	\caption{Nonlocal minimal graph with prescribed discontinuous nonlocal mean curvature. Left: plot of $u_h$ in $[-1.5,1.5]$, right: plot of $u_h$ near origin.} \label{fig:1d_discontinuity_inside}
\end{figure}

As a second illustration of interior discontinuities, let $\Omega = (-1,1)^2 \subset \mR^2$, $s=0.01$, $g = 0$ and consider $f(x,y) = 4 \, \textrm{sign}(xy)$. We use a mesh graded toward the axis and boundary with $N = 4145$ degrees of freedom and plot the numerical solution $u_h$ in \Cref{fig:2d_checkerboard}. The behavior of $u_h$ shows that the solution $u$ has discontinuities near the boundary and across the edges inside $\Omega$ where $f$ is discontinuous. 

	\begin{figure}[!htb]
	\begin{center}
		\includegraphics[width=0.5\linewidth]{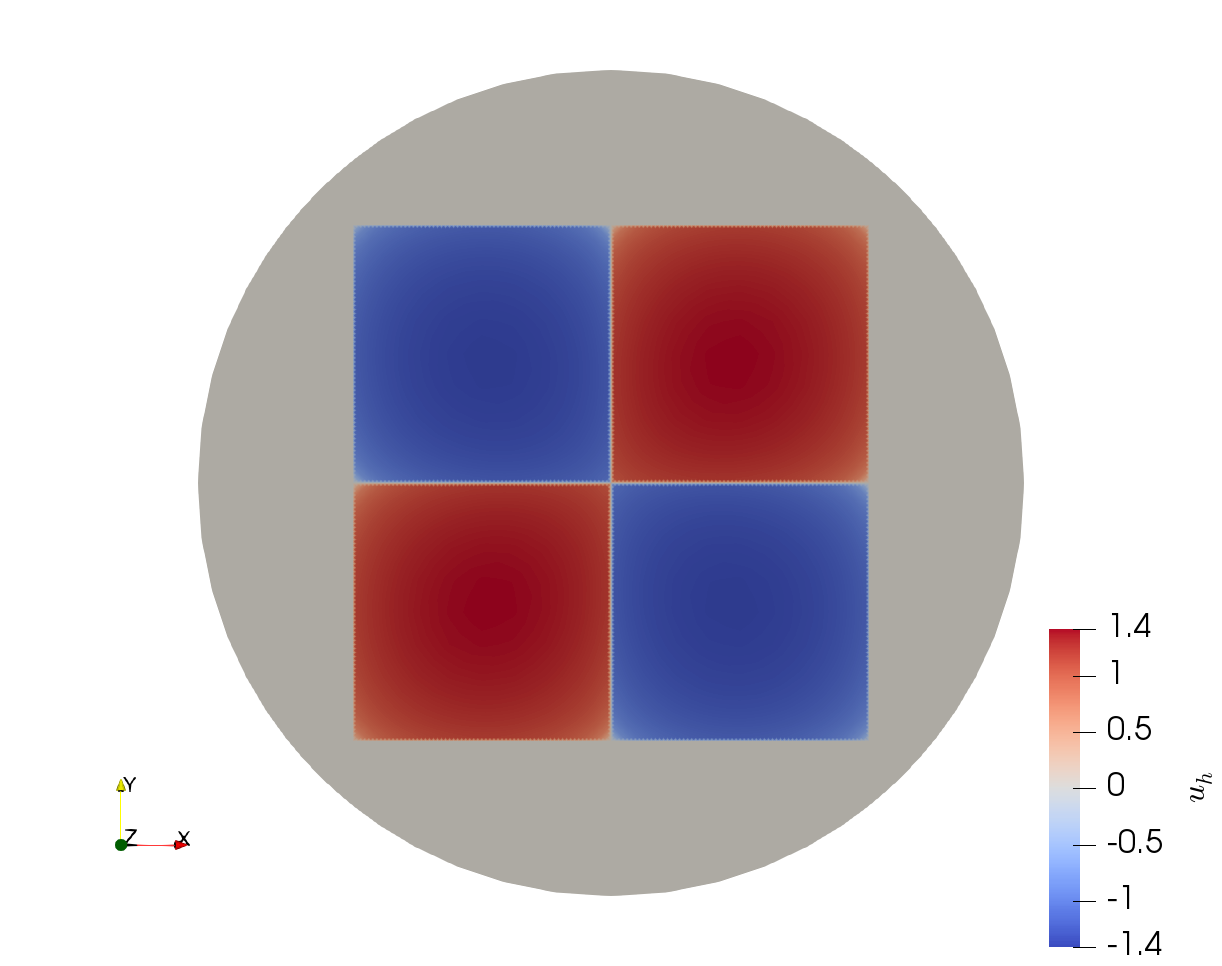}
		\includegraphics[width=0.35\linewidth]{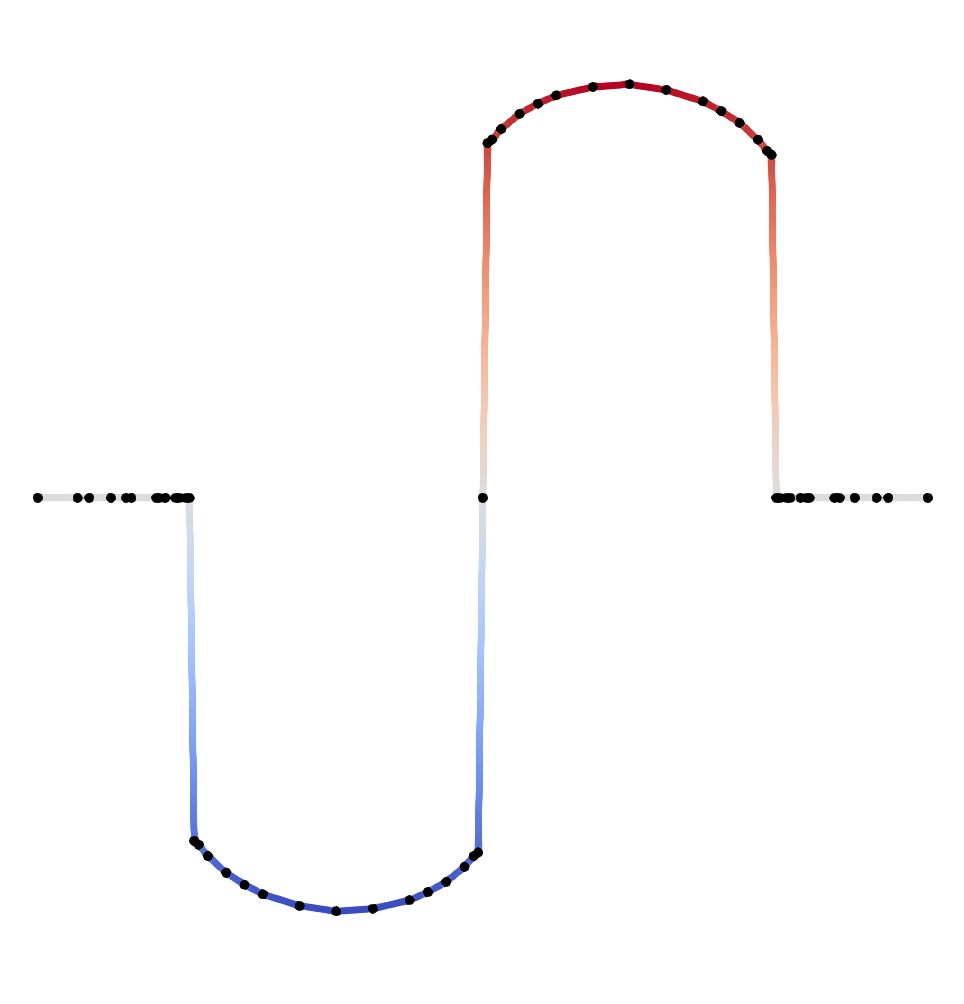}
	\end{center}
	\caption{
	A graph with prescribed discontinuous nonlocal mean curvature in the square $\Omega = (-1,1)^2$. The left panel displays a top view, while the right panel shows a side view along the slice $\{ y = 1/2\}$. The solution to \eqref{E:NMS-Prescribe-Nmc} is discontinuous inside $\Omega$. 	
} \label{fig:2d_checkerboard}
\end{figure}

%
%
%

\subsubsection{Effect of boundary curvature}
Next, we numerically address the effect of boundary curvature over nonlocal minimal graphs. For this purpose, we present examples of graphs with prescribed nonlocal mean curvature in several two-dimensional domains, in which we fix $g = 0$ and $f = -1$.

Consider the annulus $\Omega = B(0,1) \setminus B(0,1/4)$ and $s=0.25$. The top row in \Cref{fig:curvature_square} offers a top view of the discrete solution $u_h$ and a radial slice of it. We observe that the discrete solution is about three times stickier in the inner boundary than in the outer one. The middle and bottom row in \Cref{fig:curvature_square} display different views of the solution in the square $\Omega = (-1,1)^2$ for $s = 0.01$. Near the boundary of the domain $\Omega$, we observe a steep slope in the middle of the edges; however, stickiness is not observed at the convex corners of $\Omega$.

\begin{figure}[!htb]
	\begin{center}
		\begin{tabular}{c c} 
		\includegraphics[width=0.42\linewidth]{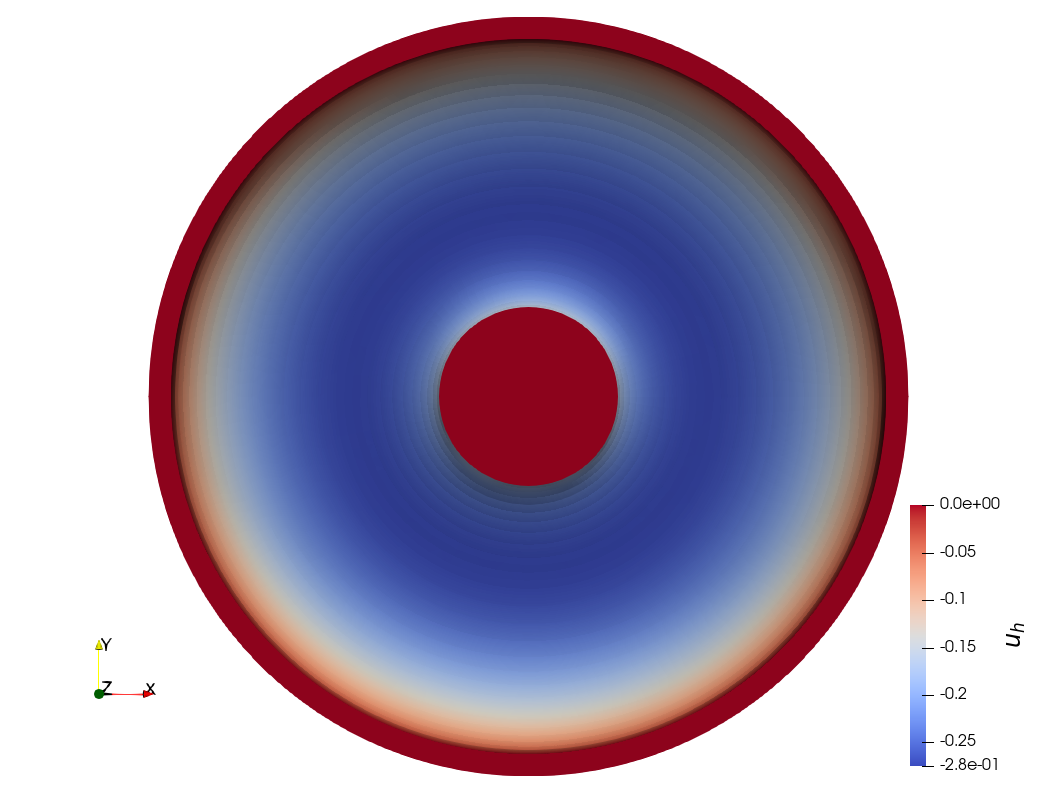}& 
		\includegraphics[width=0.42\linewidth]{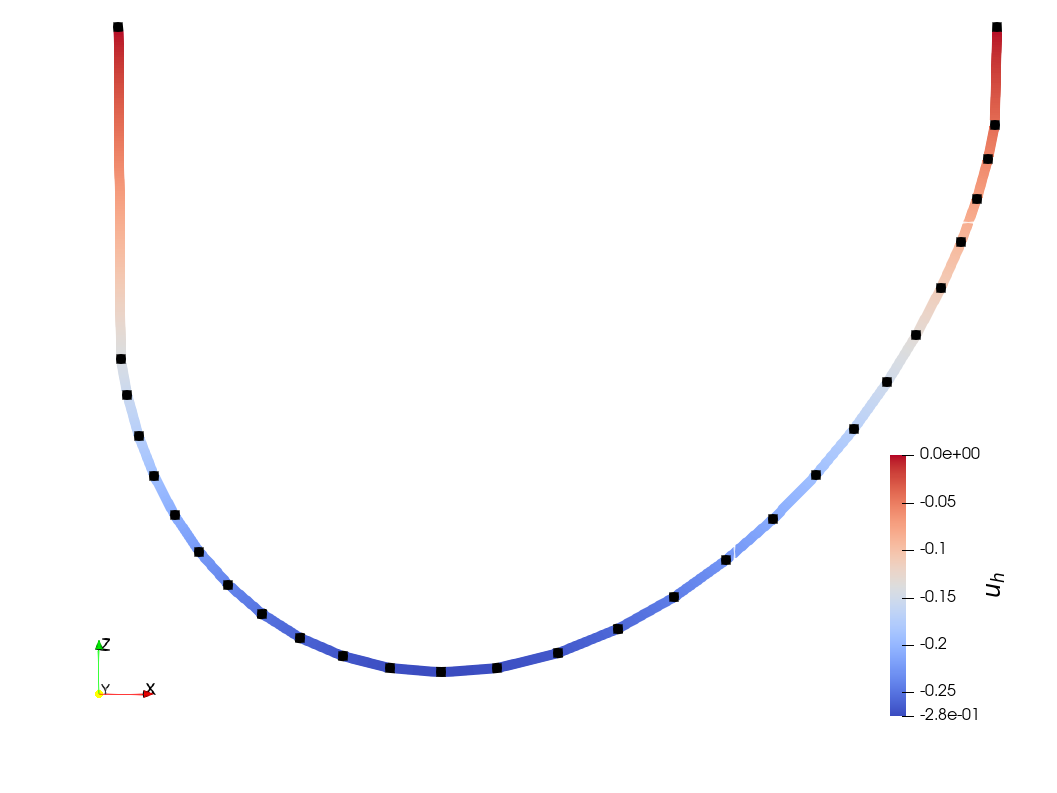}\\
		\includegraphics[width=0.42\linewidth]{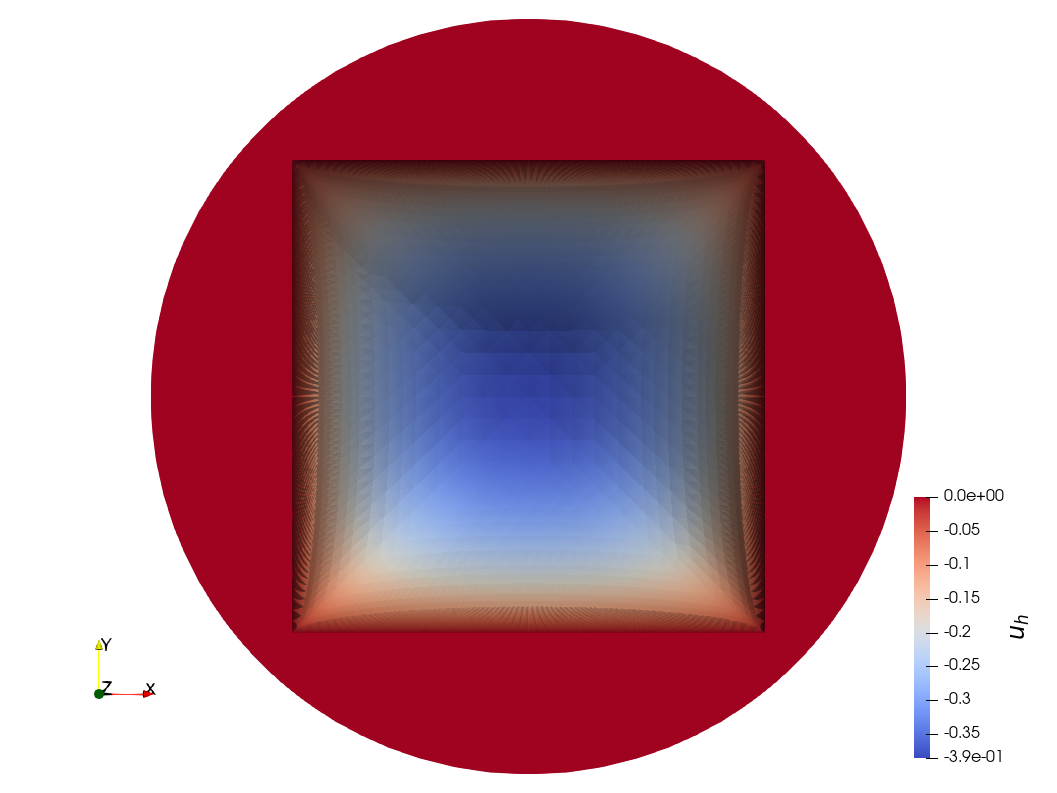} 
&
		\includegraphics[width=0.42\linewidth]{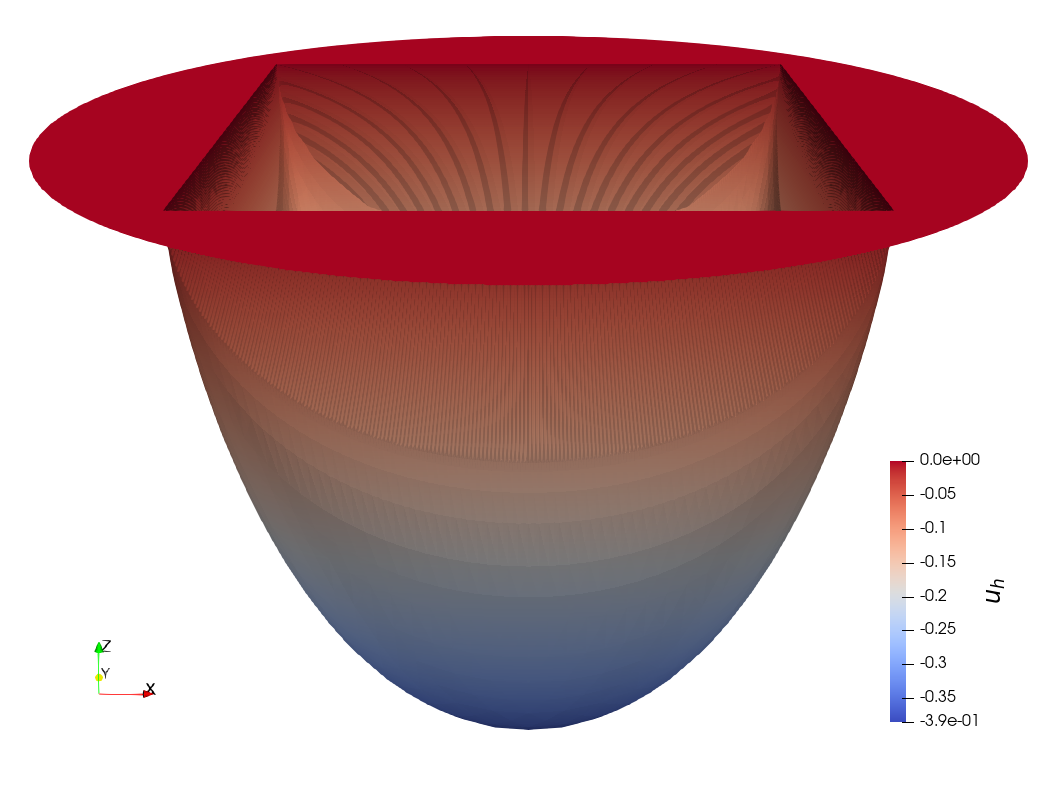} \\
		\includegraphics[width=0.42\linewidth]{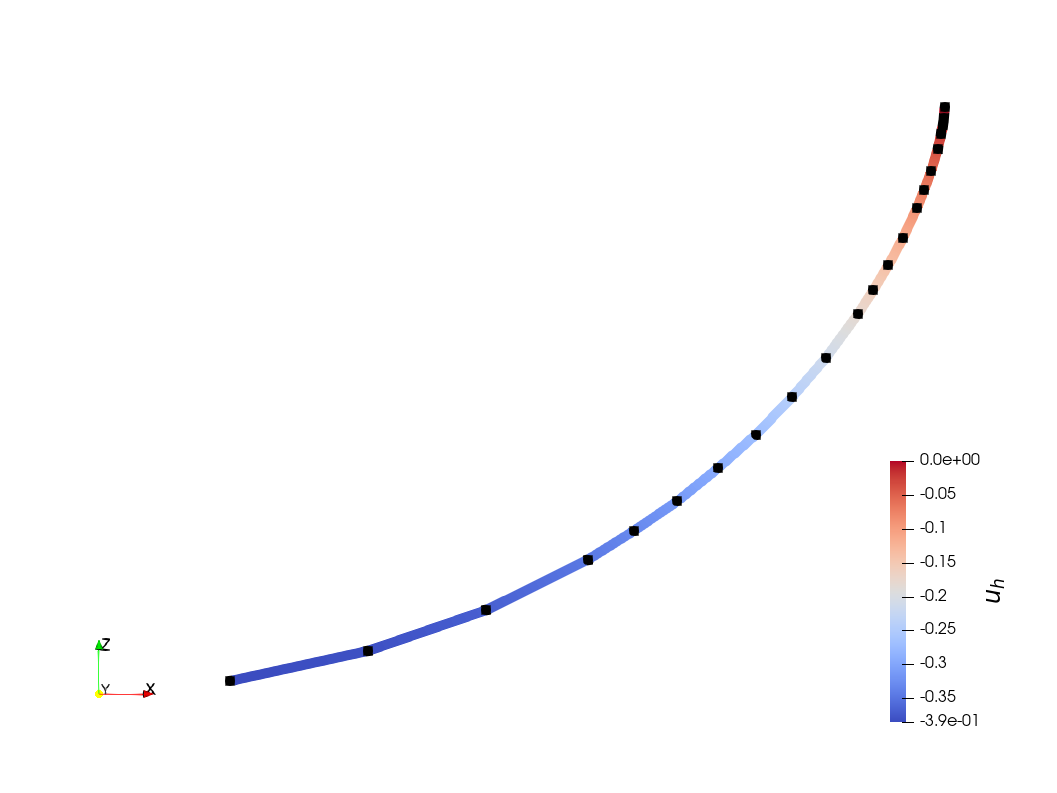} 	&	\includegraphics[width=0.42\linewidth]{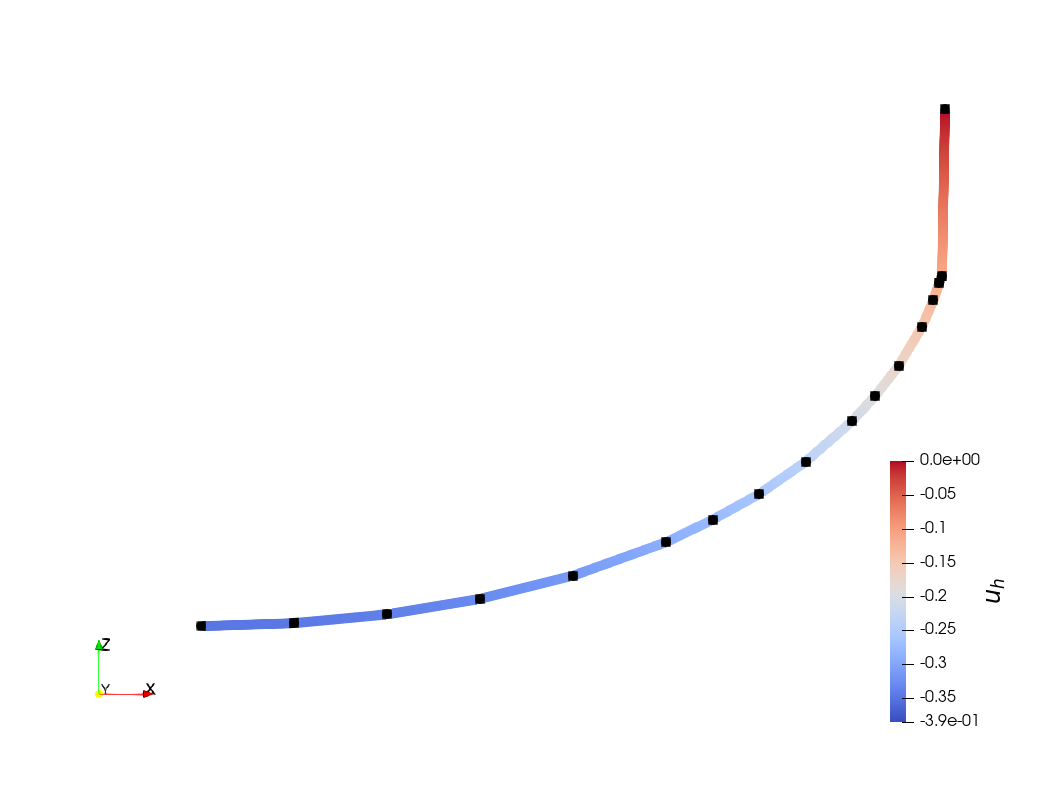}
		\end{tabular}
	\end{center}
	\caption{ Top and side views of functions with prescribed fractional mean curvature $f = -1$ in $\Omega$ that vanish in $\Omega^c$. Here, $\Omega$ is either an annulus (top row) or a square (middle and bottom row). The plot in the top-right panel corresponds to a radial slice ($y = 0, \,\,  0.25 \le x \le 1$) of the annulus, while the ones in the bottom-left and bottom-right show slices along the diagonal ($0 \le y = x \le 1$) and perpendicular to an edge of the square ($y = 0.5, 0 \le x \le 1$), respectively. We observe that stickiness is larger near the concave portions of the boundary than near the convex ones, and that it is absent in the corners of the square.
} \label{fig:curvature_square}
\end{figure}

We finally investigate stickiness at the boundary of the L-shaped domain $\Omega = (-1,1)^2 \setminus (0,1)\times (-1,0)$ with $s = 0.25, g = 0, f = -1$. We observe in \Cref{fig:curvature_Lshape} that stickiness is most pronounced at the reentrant corner but absent at the convex corners of $\Omega$. 

\begin{figure}[!htb]
	\begin{center}
		\begin{tabular}{c c} 
			\includegraphics[width=0.42\linewidth]{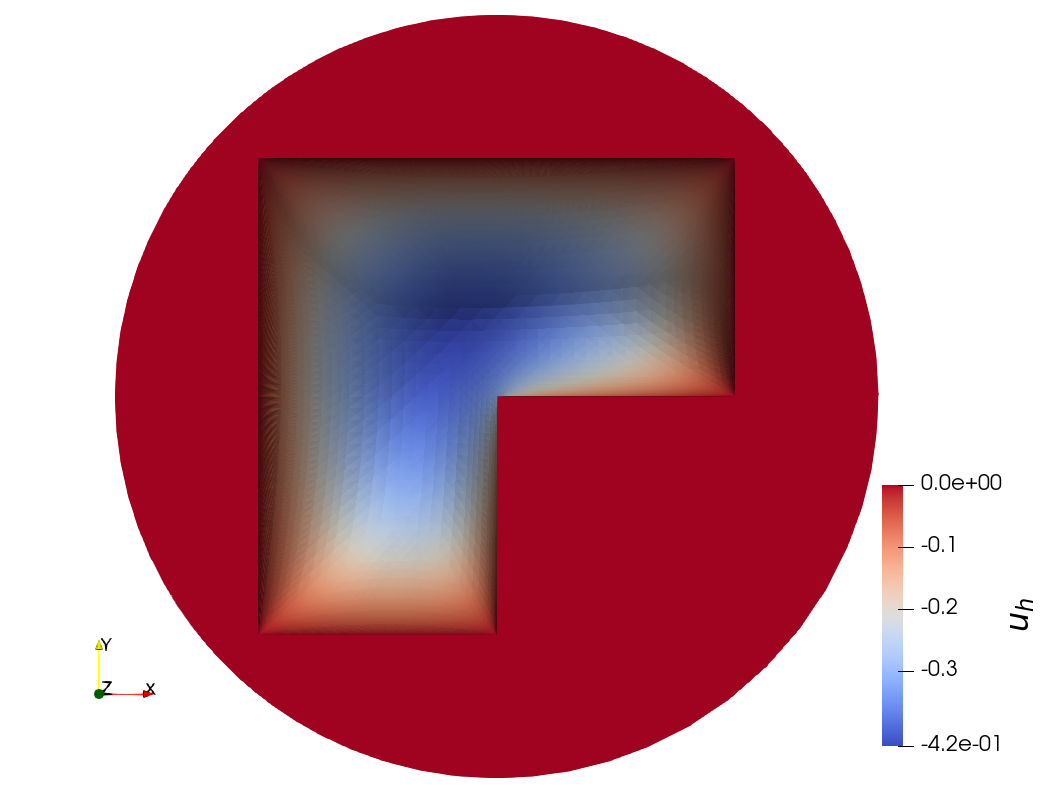}& 
			\includegraphics[width=0.42\linewidth]{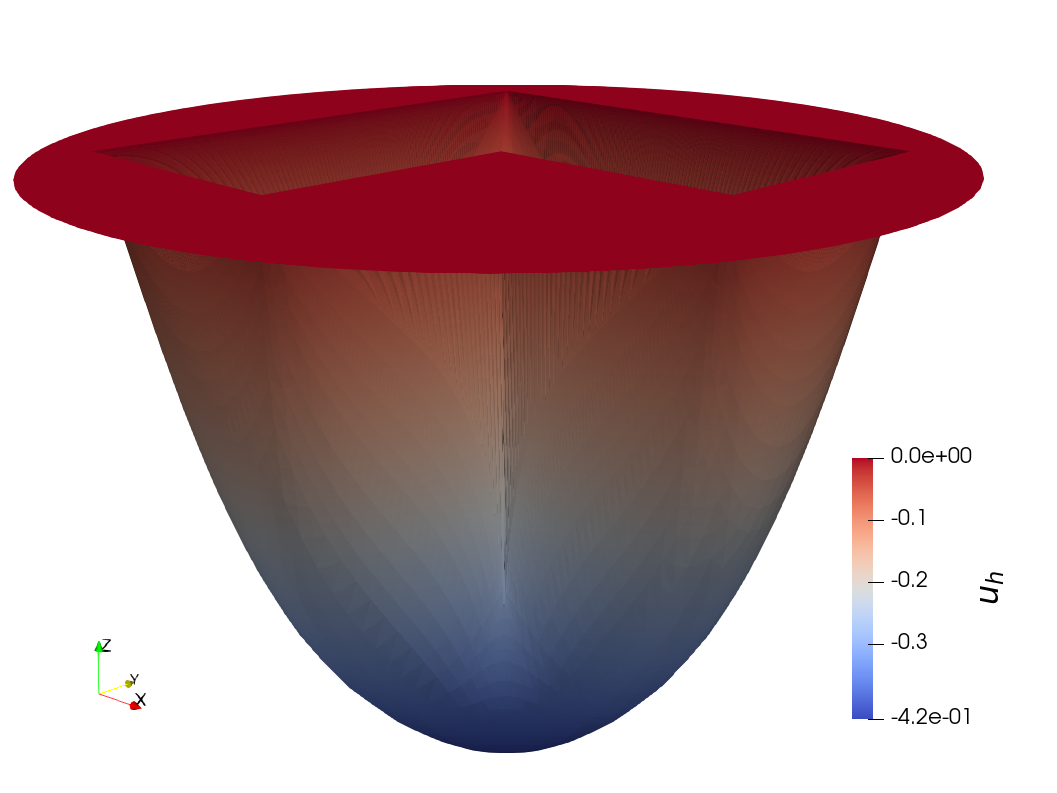}\\
			\includegraphics[width=0.42\linewidth]{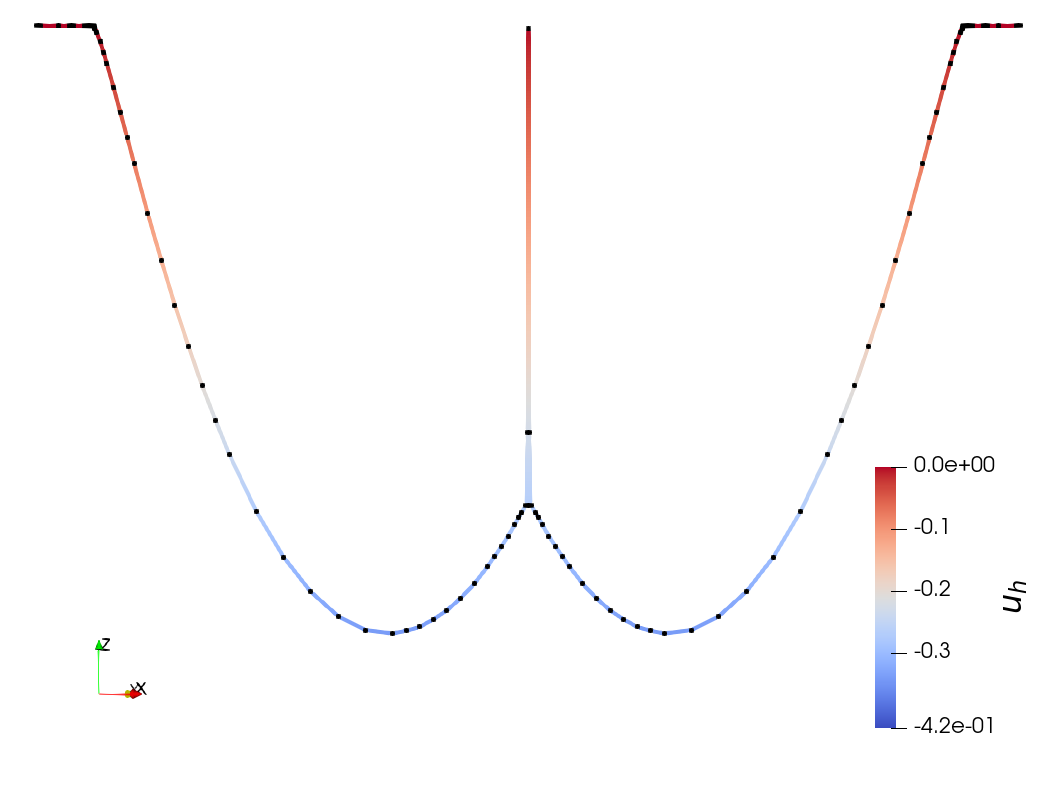} 
			&
			\includegraphics[width=0.42\linewidth]{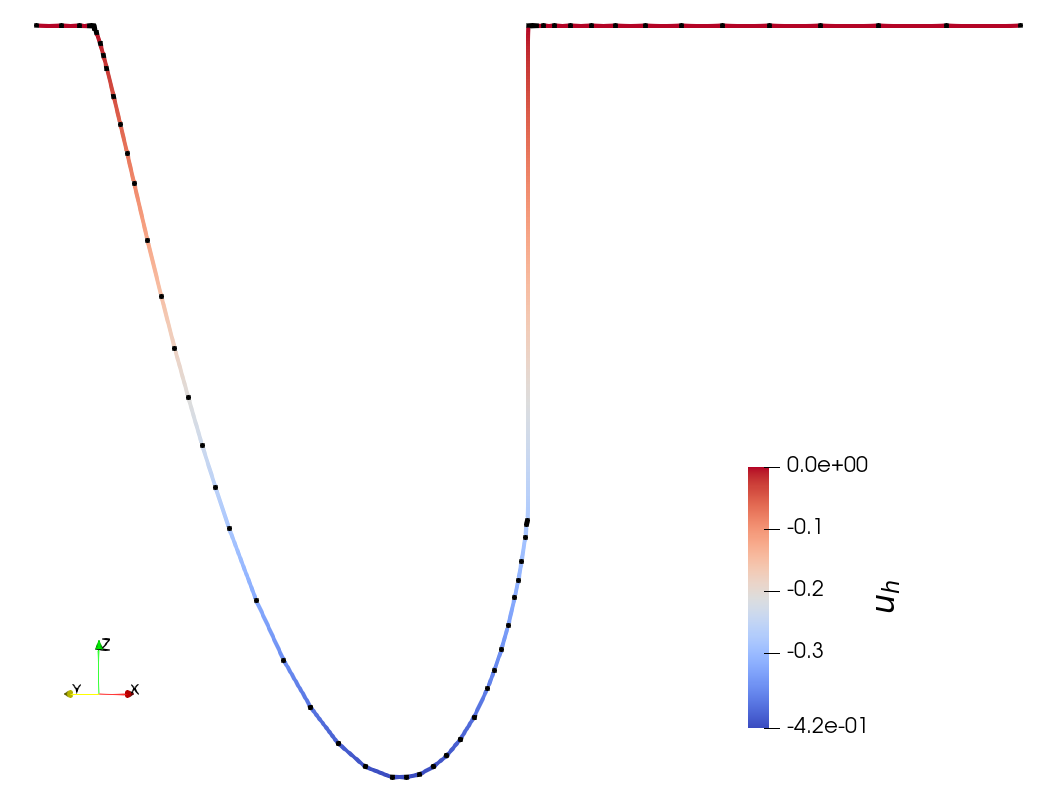} \\
			\includegraphics[width=0.42\linewidth]{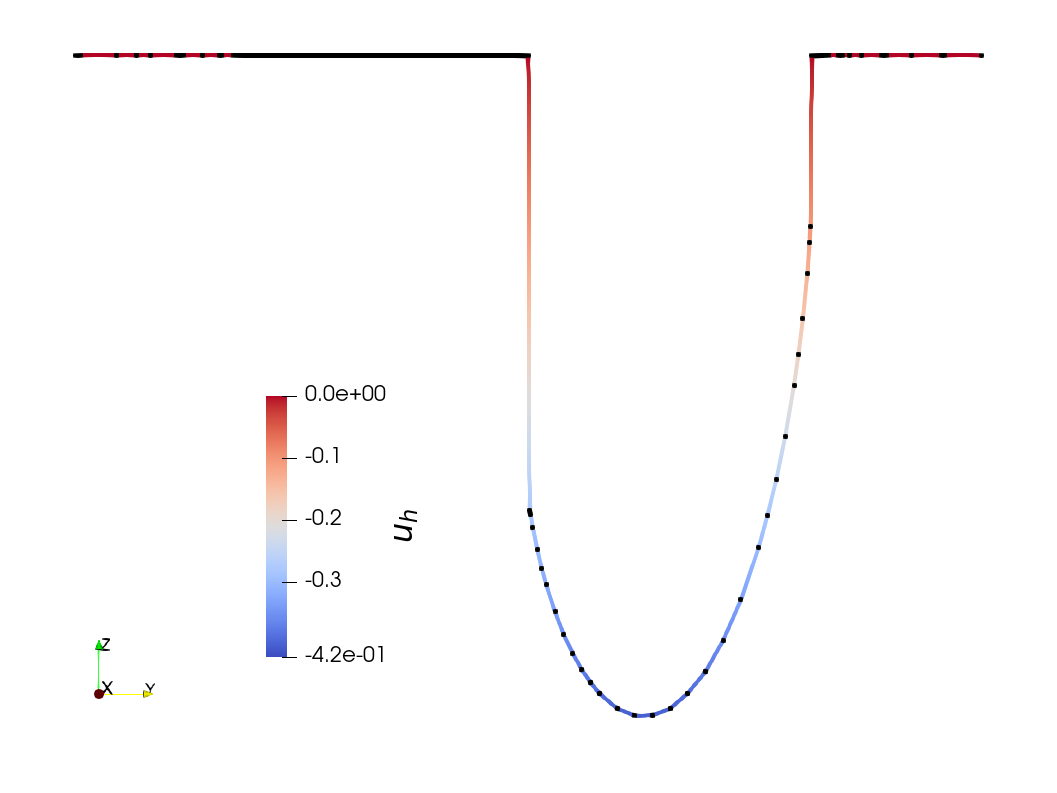} 	&	\includegraphics[width=0.42\linewidth]{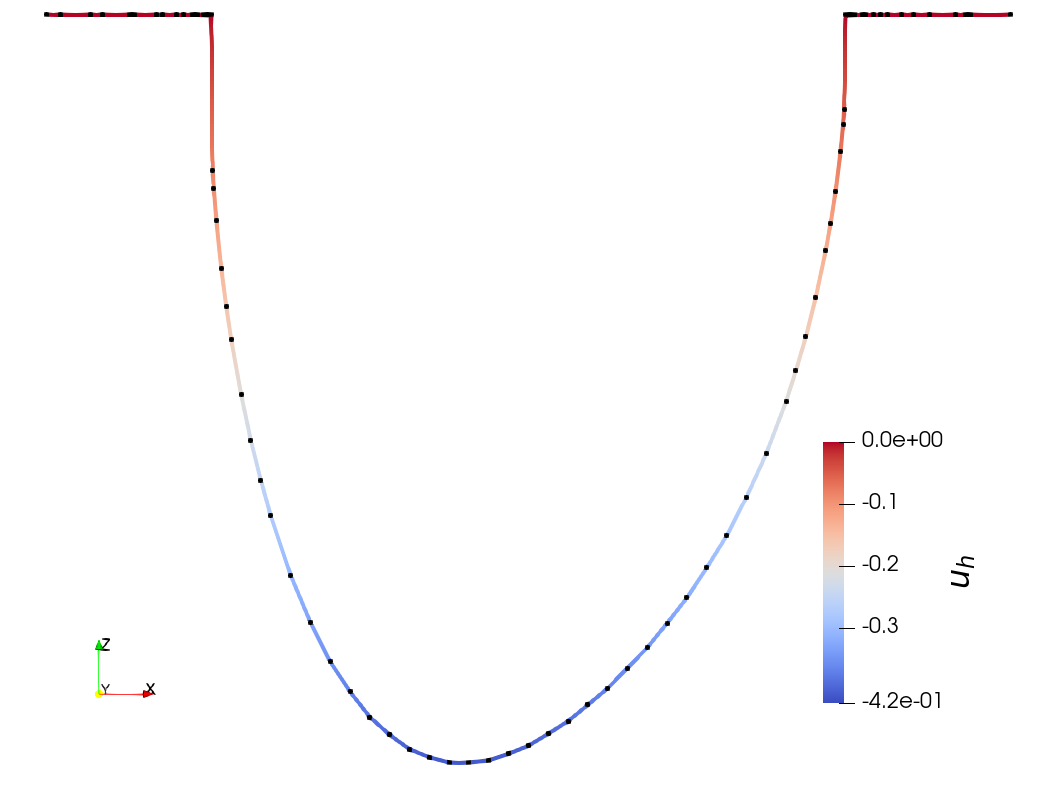}
		\end{tabular}
	\end{center}
	\caption{Stickiness on the L-shaped domain $\Omega = (-1,1)^2 \setminus (0,1)\times (-1,0)$ with prescribed fractional mean curvature $f=-1$ in $\Omega$ and Dirichlet condition $g=0$ in $\Omega^c$. The plots in the middle correspond to slices along $y = x$ and $y = -x$ respectively, while the ones in the bottom are slices along $x = 0$ or $y = 0.5$ respectively. We see that the largest stickiness takes place at the reentrant corner while there is no stickiness at the convex corners.} 
	\label{fig:curvature_Lshape}
\end{figure}

From these examples we conjecture that there is a relation between the amount of stickiness on $\partial\Omega$ and the nonlocal mean curvature of $\partial\Omega$.
Heuristically, let us assume that the Euler-Lagrange equation is satisfied at some point $x \in \partial \Omega$:
\[
H_s[u](x) = \PV \int_{\mR^d} G_s\l(\frac{u(x)-u(y)}{|x-y|}\r) \frac{dy}{|x-y|^{d+2s}} = f(x),
\]
where we recall that $G_s$ is defined in \eqref{E:DEF-Gs}.
This fact is not necessarily true, because \eqref{E:NMS-Prescribe-Nmc} guarantees this identity to hold on $\Omega$ only. Above, we assume that the minimizer is continuous in $\overline{\Omega}$, so that we can set $u(x) := \lim_{\Omega \ni y \to x} u(y)$. Thus, we can define the stickiness at $x \in \partial \Omega$ as
\[
M_s(x) := \lim_{\Omega^c \ni y \to x} u(y) - u(x).
\]
We point out that in these examples, because the minimizer $u$ attains its maximum on $\Omega^c$ and is constant in that region, we have $M_s \ge 0$. Let $r>0$ be small, and let us assume that the prescribed curvature is $f(x) = 0$, that we can split the principal value integral in the definition of $H_s$ and that the contribution of the integral on $\mRd \setminus B_r(x)$ is negligible compared with that on $B_r(x)$. Then, we must have 
\begin{equation*}\label{eq:stickiness-balance}
\int_{\Omega \cap B_r(x)} G_s\l(\frac{u(x)-u(y)}{|x-y|}\r) \frac{dy}{|x-y|^{d+2s}} \approx
\int_{\Omega^c \cap B_r(x)} G_s\l(\frac{u(y) - u(x)}{|x-y|}\r) \frac{dy}{|x-y|^{d+2s}}.
\end{equation*}
If the solution is sticky at $x$, namely $M_s > 0$, then we can approximate
\[
\int_{\Omega^c \cap B_r(x)} G_s\l(\frac{u(y) - u(x)}{|x-y|}\r) \frac{dy}{|x-y|^{d+2s}} \approx \int_{\Omega^c \cap B_r(x)} G_s\l(\frac{M_s}{|x-y|}\r) \frac{dy}{|x-y|^{d+2s}}.
\]

Due to the fact that $G_s\l(\frac{M_s}{|x-y|}\r)$ is strictly increasing with respect to $M_s$, we can heuristically argue that stickiness $M_s(x)$ grows with the increase of the ratio
\[
R(x) := \frac{|\Omega\cap B_r(x)|}{|\Omega^c\cap B_r(x)|}
\]
in order to maintain the balance between the integral in $\Omega \cap B_r(x)$ with the one in $\Omega^c \cap B_r(x)$. Actually, if $R(x)<1$, as happens at convex corners $x\in\partial\Omega$, it might not be possible for these integrals to balance unless $M_s(x) = 0$.
This supports the conjecture that the minimizers are not sticky at convex corners.

\section{Concluding remarks}
This paper discusses finite element discretizations of the fractional Plateau and the prescribed fractional mean curvature problems of order $s \in (0,1/2)$ on bounded domains $\Omega$ subject to exterior data being a subgraph. Both of these can be interpreted as energy minimization problems in spaces closely related to $W^{2s}_1(\Omega)$.

We discuss two converging approaches for computing discrete minimizers: a semi-implicit gradient flow scheme and a damped Newton method. Both of these algorithms require the computation of a matrix related to weighted linear fractional diffusion problems of order $s + \frac{1}{2}$. We employ the latter for computations.

A salient feature of nonlocal minimal graphs is their stickiness, namely that they are generically discontinuous across the domain boundary. Because our theoretical results do not require meshes to be quasi-uniform, we resort to graded meshes to better capture this phenomenon. Although the discrete spaces consist of continuous functions, our experiments in \Cref{sec:stickiness_thm2} show the method's capability of accurately estimating the jump of solutions across the boundary. In \Cref{sec:behavior_2d} we illustrate a geometric rigidity result: wherever the nonlocal minimal graphs are continuous in the boundary of the domain, they must also {\em match the slope of the exterior data.}  Fractional minimal graphs may change their convexity within $\Omega$, as indicated by our experiments in \Cref{sec:convexity}.

The use of graded meshes gives rise to poor conditioning, which in turn affects the performance of iterative solvers. Our experimental findings reveal that using diagonal preconditioning alleviates this issue, particularly when the grading is not too strong. Preconditioning of the resulting linear systems is an open problem.

Because in practice it is not always feasible to exactly impose the Dirichlet condition on $\mRd \setminus \Omega$, we study the effect of data truncation, and show that the finite element minimizers $u_h^H$ computed on meshes $\Th$ over computational domains $\Omega_H$ converge to the minimal graphs as $h \to 0$, $H\to 0$ in $W^{2r}_1(\Omega)$ for $r \in [0,s)$. This is confirmed in our numerical experiments.

Our results extend to prescribed minimal curvature problems, in which one needs some assumptions on the given curvature $f$ in order to guarantee the existence of solutions. We present an example of an ill-posed problem due to data incompatibility. Furthermore, our computational results indicate that graphs with discontinuous prescribed mean curvature may be discontinuous in the interior of the domain. We explore the relation between the curvature of the domain and the amount of stickiness, observe that discrete solutions are stickier on concave boundaries than convex ones, and conjecture that they are continuous on convex corners.

\bibliographystyle{plain}
\bibliography{NMS}
\end{document}